\newcommand{\cvec}[1]{\begin{pmatrix}#1\end{pmatrix}}
\DeclarePairedDelimiter\floor{\lfloor}{\rfloor}
\DeclarePairedDelimiter\set{\{}{\}}
\DeclarePairedDelimiter\paren{(}{)}
\DeclarePairedDelimiter\brac{[}{]}
\newcommand{\card}[1]{\ensuremath{\# #1}}  
\DeclareMathOperator{\conv}{conv} 
\newcommand{\R}{\mathbb{R}}
\newcommand{\V}{\mathcal{V}}
\newcommand{\f}{\varphi}
\newcommand{\ff}{\eta} 
\newcommand{\hh}{\tau} 
\newcommand{\ii}{\theta}
\newcommand{\jj}{\rho}
\renewcommand{\ge}{\geqslant} 
\renewcommand{\le}{\leqslant}
\DeclareMathOperator{\wed}{W}
\newtheorem{theorem}{Theorem}
\newtheorem{corollary}[theorem]{Corollary}
\newtheorem{lemma}[theorem]{Lemma}
\newtheorem{proposition}[theorem]{Proposition}
\newtheorem{conjecture}[theorem]{Conjecture}
\theoremstyle{definition}
\newtheorem{fact}{Fact}
\newenvironment{factproof}{\noindent\emph{Proof of fact.}}{\hfill$\qed$}
\theoremstyle{remark}
\newtheorem{remark}[theorem]{Remark}
\newtheorem{claim}{Claim}
\newenvironment{claimproof}{\noindent\emph{Proof of claim.}}{\hfill$\qed$}
\def\tinyskip{\vspace{2pt}}
\theoremstyle{definition}
\newcounter{subcases}[case]
\renewcommand*\thesubcases{\arabic{subcases}}
\newenvironment{subcases}
{%
 \setcounter{subcases}{0}%
    \def\subcase
      {%
        \tinyskip
        \par\noindent
        \refstepcounter{subcases}%
        \emph{Subcase \thecase.\thesubcases.}              
      }%
}
{%
}
\crefname{lemma}{Lemma}{Lemmas}
\crefname{theorem}{Theorem}{Theorems}
\crefname{corollary}{Corollary}{Corollaries}
\crefname{proposition}{Proposition}{Propositions}
\crefname{conjecture}{Conjecture}{Conjectures}
\crefname{claim}{Claim}{Claims}
\crefname{case}{Case}{Cases}
\crefname{subcase}{Subcase}{Subcases}
\crefname{fact}{Fact}{Facts}
\begin{document}

\date{\today}
\title{A lower bound theorem for  $d$-polytopes with at most $3d-1$  vertices}
\author{Guillermo Pineda-Villavicencio}
\author{Jie Wang}

\address{School of Information Technology, Deakin University, Locked Bag 20000, Geelong VIC. 3220, Australia}
\email{\texttt{guillermo.pineda@deakin.edu.au},\texttt{wangjiebulingbuling@foxmail.com}}


\begin{abstract} We prove a lower bound theorem for the number of $k$-faces ($1\le k\le d-2$) in a $d$-dimensional polytope $P$ (or \emph{$d$-polytope}) with up to $3d-1$ vertices. Previous lower bound theorems for $d$-polytopes with few vertices concern those with at most $2d$ vertices~\cite{Xue21}, $2d+1$ vertices~\cite{Xue24,PinYos22}, and $2d+2$ vertices~\cite{PinTriYos24}.

If $P$ has exactly $d+2$ facets and $2d+\ell$ vertices ($\ell\ge 1$), the lower bound is tight for certain combinations of $d$ and $\ell$. When  $P$ has at least $d+3$ facets and $2d+\ell$ vertices ($\ell\ge 1$), the lower bound remains  tight up to $\ell=d-1$, and  equality  for some $1\le k\le d-2$ is attained only when $P$ has precisely  $d+3$ facets. 

We exhibit at least one minimiser for each number of vertices between $2d+1$ and $3d-1$, including two distinct minimisers with $2d+2$ vertices and three with $3d-2$ vertices.
\end{abstract}

\maketitle

\section{Introduction}
 
Two (convex) polytopes are combinatorially isomorphic if their face lattices are isomorphic. In this paper, we do not distinguish between combinatorially isomorphic polytopes. 

Gr\"unbaum~\cite[Sec.~10.2]{Gru03} conjectured that the function 
\begin{equation}\label{eq:function-at-most-2d}      
   \ii_k(d+s,d):=\binom{d+1}{k+1}+\binom{d}{k+1}-\binom{d+1-s}{k+1},\; \text{for  $1\le s\le d$},
\end{equation}
gives the minimum number of $k$-faces of a $d$-polytope with $d+s$ vertices. He proved the conjecture for $s=2,3,4$, and Xue~\cite{Xue21} later completed the proof, including a characterisation of the unique minimisers for $k\in[1\ldots d-2]$. Earlier, partial results had been obtained by Pineda-Villavicencio et~al.~\cite{PinUgoYos15}  for certain values of $k$.

For $k\in [1\ldots d-2]$ (the set of integers between $1$ and $d-2$), each minimiser of \eqref{eq:function-at-most-2d} is a  $(d-s)$-fold pyramid over a  simplicial $s$-prism for $s\in [1\ldots d]$; we refer to such a polytope as an \emph{($s$,$d-s$)-triplex}.

Let $P$ be a $d$-polytope in $\R^{d}$, and let $F$ be a face of $P$. If $K$ is a closed halfspace in $\mathbb{R}^d$ such that the vertices of $P$ not in $K$ are precisely the vertices of $F$, then the polytope $P':=P\cap K$ is said to be obtained by \emph{truncating the face} $F$ of $P$. 

For $d\ge 2$ and $\ell\in [1\ldots d-1]$,  truncating a \emph{simple} vertex (i.e., one that lies in exactly $d$ edges) from the $(\ell+1,d-\ell-1)$-triplex yields a $d$-polytope $J(\ell+1,d)$ with $2d+\ell$ vertices and $d+3$ facets. The  number of $k$-faces of $J(\ell+1,d)$ for $k\in [1\ldots d-1]$ is 
\begin{equation*}
  \begin{aligned}
  \eta_k(2d+\ell,d)&:=\binom{d+1}{k+1}+2\binom{d}{k+1}-\binom{d-\ell}{k+1}-\binom{1}{k+1}\\
  &\;=\binom{d}{k}+3\binom{d}{k+1}-\binom{d-\ell}{k+1}-\binom{1}{k+1}.
  \end{aligned}
\end{equation*}
The paper~\cite{PinUgoYos16a} refers to $J(2,d)$ as the \emph{$d$-pentasm}, $J(3,d)$ as $B(d)$, and $J(d,d)$ as $J(d)$. 

The \emph{Cartesian product} of a $d$-polytope $P\subset \R^{d}$ and a $d'$-polytope $P'\subset \R^{d'}$ is the Cartesian product of the sets $P$ and $P'$: $P\times P'=\set*{(p, p')^{t}\in \R^{d+d'}\mid p\in P,\, p'\in P'}.$

We denote the $d$-simplex by $T(d)$. Then the ($s$,$d-s$)-triplex is a $(d-s)$-fold pyramid over $T(1)\times T(s-1)$.   Let $f_{k}$ denote the number of $k$-faces in a polytope. 

\begin{lemma}[McMullen 1971, {\cite[Sec.~3]{McM71}}]\label[lemma]{lem:dplus2facets} 
Let  $P$ be a $d$-polytope with $d+2$ facets, where $d\ge 2$. Then there exist integers $2\le a\le d$ and $1\le m\le \floor{a/2}$ such that $P$ is a $(d-a)$-fold pyramid over $T(m)\times T(a-m)$. The number of  $k$-faces of $P$ is
\begin{equation}\label{eq:dplus2facets}
\jj_k(a,m,d):= \binom{d+2}{k+2} -\binom{d-a+m+1}{k+2}-\binom{d-m+1}{k+2} +\binom{d-a+1}{k+2}.
\end{equation}
In particular,  $f_{0}(P)=d+1+m(a-m)$.
\end{lemma}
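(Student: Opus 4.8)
The plan is to pass to the polar dual $P^{*}$ of $P$, which is a $d$-polytope with $d+2$ \emph{vertices}, classify such polytopes by their Gale transforms, and dualise back. The Gale transform of $P^{*}$ is a sequence of $d+2$ vectors in $\R^{1}$, so its standard Gale diagram is a sign pattern, and the combinatorial type of $P^{*}$ depends only on the numbers $z$, $p$, $n$ of zero, positive and negative entries, with $p+n+z=d+2$. The condition that a configuration in $\R^{1}$ be the Gale diagram of a polytope --- that every open halfspace bounded by a hyperplane through the origin contain at least two of the vectors --- here reduces to $p\ge 2$ and $n\ge 2$.

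First I would extract the structure of $P^{*}$. The $z$ zero entries mark apices, so $P^{*}$ is a $z$-fold pyramid over the $(p+n-2)$-polytope $R$ whose standard Gale diagram has $p$ positive and $n$ negative entries. Reading faces off this diagram, a vertex subset of $R$ spans a proper face exactly when its complement contains both a positive and a negative entry. A short computation of affine dependences shows that the free sum (direct sum) $T(p-1)\oplus T(n-1)$ realises precisely this Gale diagram, so $R\cong T(p-1)\oplus T(n-1)$ by the classification. Dualising, and using that the dual of a pyramid is a pyramid over the dual, that $(A\oplus B)^{*}\cong A^{*}\times B^{*}$, and that simplices are self-dual, we obtain that $P$ is a $z$-fold pyramid over $T(p-1)\times T(n-1)$. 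Setting $a:=p+n-2$ and $m:=\min\{p-1,n-1\}$ gives $z=d-a$, the bounds $2\le a\le d$ and $1\le m\le\floor{a/2}$, and $T(p-1)\times T(n-1)\cong T(m)\times T(a-m)$; this is the asserted description of $P$.

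For the face numbers I would count through $f_{k}(P)=f_{d-1-k}(P^{*})$. Write $P^{*}=\pyr^{z}(R)=R*T(z-1)$, the join of $R$ with the simplex on the $z$ apices, and let $X$, $Y$ be the vertex sets of the two simplex factors of $R$, so $|X|=p$ and $|Y|=n$. The faces of $P^{*}$ split into two families. The ``large'' faces are $R*G$ for a proper face $G$ of $T(z-1)$; these give $\binom{z}{j-a}$ faces of dimension $j$. The remaining faces have vertex set $S\cup W'$, where $S\subseteq X\cup Y$ satisfies $|S\cap X|\le p-1$ and $|S\cap Y|\le n-1$ and $W'$ is an arbitrary set of apices, and such a face has dimension $|S|+|W'|-1$; by Chu--Vandermonde together with an inclusion--exclusion removing the forbidden cases $S\supseteq X$ and $S\supseteq Y$, these give $\binom{d+2}{j+1}-\binom{n+z}{j+1-p}-\binom{p+z}{j+1-n}+\binom{z}{j+1-p-n}$ faces of dimension $j$. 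One application of Pascal's rule combines the two $\binom{z}{\cdot}$ terms into $\binom{z+1}{j+2-p-n}$. Substituting $p=m+1$, $n=a-m+1$, $z=d-a$, putting $j=d-1-k$, and applying $\binom{d+2}{d-k}=\binom{d+2}{k+2}$ (with the analogous symmetry for each of the other three binomials) reproduces $\jj_{k}(a,m,d)$ from \eqref{eq:dplus2facets}. For the final sentence, $f_{0}(P)$ is the number of facets of $P^{*}$, which is $z$ plus the number of facets of $R$, i.e.\ $z+pn=(d-a)+(m+1)(a-m+1)=d+1+m(a-m)$.

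The substantive ingredient is the Gale-diagram classification of $d$-polytopes with $d+2$ vertices (as in, e.g., Gr\"unbaum~\cite{Gru03}); that is where the polytope geometry enters, and the points needing care are that the zero entries exactly encode the iterated-pyramid structure and that $R$ is correctly identified. The $f$-vector computation then presents no conceptual obstacle, but must be done attentively: one must not overlook the non-simplex ``large'' faces $R*G$ (whose dimension is \emph{not} one less than their vertex count), and one must keep the index ranges in the binomial sums honest so that terms like $\binom{z}{j+1-p-n}$ vanish precisely when they should. I expect this bookkeeping, rather than any genuine difficulty, to be the main source of friction.
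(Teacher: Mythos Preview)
The paper does not prove this lemma: it is stated with a citation to McMullen~\cite{McM71} and used as a black box throughout. So there is no in-paper proof to compare against.

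Your plan is correct and is the standard route. The Gale-diagram classification of $(d+2)$-vertex $d$-polytopes is exactly Gr\"unbaum's Theorem~6.1.4, which the paper itself invokes for the dual statement (the polytopes $T_{m}^{d,d-a}$). Your identification of the zero entries with iterated-pyramid apices, and of the base with $T(p-1)\oplus T(n-1)$, is right; dualising via $(\pyr Q)^{*}\cong \pyr(Q^{*})$, $(A\oplus B)^{*}\cong A^{*}\times B^{*}$, and $T(r)^{*}\cong T(r)$ gives the structural claim. Your face count also checks out: the split into ``large'' faces $R*G$ and ``small'' faces $F*G$ with $F$ a proper face of $R$ is exactly the face lattice of an iterated pyramid, the inclusion--exclusion on $S\supseteq X$ and $S\supseteq Y$ correctly encodes the proper-face condition for $R=T(p-1)\oplus T(n-1)$, and the Pascal step merging $\binom{z}{j+1-p-n}+\binom{z}{j+2-p-n}$ into $\binom{z+1}{j+2-p-n}$ followed by the symmetry $\binom{N}{d-k}=\binom{N}{N-d+k}$ recovers \eqref{eq:dplus2facets} verbatim. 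The vertex count $f_{0}(P)=f_{d-1}(P^{*})=z+pn=d+1+m(a-m)$ is fine.

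One cosmetic point: you write the small-face count as starting from $\binom{d+2}{j+1}$ via ``Chu--Vandermonde'', but what you are really doing is counting all $(j+1)$-subsets of the $d+2$ vertices and subtracting; Vandermonde is not needed there. This does not affect correctness.
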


 Pineda-Villavicencio~\cite[Prob.~8.7.11]{Pin24} conjectured that the polytope $J(\ell+1,d)$ and certain $d$-polytopes with $d+2$ facets minimise the number of faces among all $d$-polytopes with at most  $3d-1$ vertices. In a recent paper by Yost and the second author \cite{WanYos_conditional}, the conjecture is confirmed for decomposable $d$-polytopes with $2d+\ell$ vertices, where $\ell \le d-4$.
 
 \begin{conjecture}[{\cite[Prob.~8.7.11]{Pin24}}]
 \label[conjecture]{conj:3minus1}
   Let $d\ge 3$ and $\ell\in [1\ldots d-1]$. Let $P$ be a $d$-polytope with $2d+\ell$ vertices.
\begin{enumerate}[{\rm (i)}]
\item If $P$ has at least $d+3$ facets, then $f_{k}(P)\ge f_{k}(J(\ell+1,d))$ for each $k\in [1\ldots d-2]$.
\item  If $P$ has $d+2$ facets, then it is a $(d-a)$-fold pyramid over $T(m)\times T(a-m)$ for some $a\in [2\ldots d]$ and $2\le m\le \floor{a/2}$.
\end{enumerate}
 \end{conjecture}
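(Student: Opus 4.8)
\medskip
\noindent\emph{A proof strategy.}
Part~(ii) is immediate: by \cref{lem:dplus2facets}, $P$ is already a $(d-a)$-fold pyramid over $T(m)\times T(a-m)$ with $2\le a\le d$ and $1\le m\le\floor{a/2}$ and $f_0(P)=d+1+m(a-m)$, so $f_0(P)=2d+\ell$ forces $m(a-m)=d+\ell-1$, and $m=1$ would give $a=d+\ell>d$, hence $m\ge 2$. For part~(i) I would argue by induction on $d$ (the case $d=3$ following from Euler's relation together with $f_{d-1}(P)\ge d+3$), using as inputs the lower bound theorems for $d$-polytopes with at most $2d$, $2d+1$, and $2d+2$ vertices~\cite{Xue21,Xue24,PinYos22,PinTriYos24} and the decomposable case of~\cite{WanYos_conditional}. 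The arithmetic backbone is the identity, obtained by Pascal's rule from the closed form of $\eta$ (for $1\le k\le d-2$),
\begin{equation*}
  \eta_k\bigl(2(d-1)+(\ell+1),\,d-1\bigr)+\eta_{k-1}\bigl(2(d-1)+(\ell+1),\,d-1\bigr)-\eta_k(2d+\ell,d)=\binom{d-1-\ell}{k}-\binom{1}{k},
\end{equation*}
whose right-hand side is $\ge 0$ for all such $k$ whenever $\ell\le d-2$ (and is $>0$ when $k=1$ and $\ell\le d-3$), but can be negative for $\ell=d-1$, $k=1$.

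Next I would dispose of pyramids. If $P=\pyr(Q)$ then $Q$ is a $(d-1)$-polytope with $f_0(Q)=2(d-1)+(\ell+1)$, $f_{d-2}(Q)=f_{d-1}(P)-1\ge d+2$, and $f_k(P)=f_k(Q)+f_{k-1}(Q)$. When $\ell\le d-3$ the base $Q$ lies in the inductive range, so one applies the inductive hypothesis if $Q$ has $\ge d+3$ facets, and \cref{lem:dplus2facets} together with a direct binomial comparison of $\jj_k(a',m',d-1)+\jj_{k-1}(a',m',d-1)$ with $\eta_k(2d+\ell,d)$ if $Q$ has exactly $d+2$ facets; combined with the displayed identity this gives $f_k(P)\ge\eta_k(2d+\ell,d)$. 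The values $\ell\in\{d-2,d-1\}$ sit just outside (then $Q$ has $3(d-1)$ or $3(d-1)+1$ vertices), and these finitely many ``pyramid over a slightly-too-large base'' configurations I would settle by a separate direct estimate, or by pushing the induction one extra step.

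It remains to treat non-pyramidal $P$ with at least $d+3$ facets. If $P$ is decomposable, \cite{WanYos_conditional} covers $\ell\le d-4$, and I would extend that argument to $\ell\in\{d-3,d-2,d-1\}$. The essential case --- and the step I expect to be the main obstacle --- is $P$ indecomposable and not a pyramid: here the plan is to locate a ``reducing'' face of $P$, namely a simplex facet, or a low-dimensional face whose figure is a simplex or close to one (in particular a simple vertex), then perform the corresponding deletion or contraction, track the change in every $f_k$, and close via the displayed identity and the analogous $(d+2)$-facet comparison. As in~\cite{PinTriYos24} for $2d+2$ vertices, I do not anticipate one uniform reduction: a case analysis organised by the number of facets ($d+3$, $d+4$, \dots) and by the combinatorics of the vertex figures and facets of $P$ looks unavoidable, and the larger window $\ell\in[1\ldots d-1]$ makes it heavier. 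Finally, the sharpening announced in the abstract --- that equality for some $k$ forces exactly $d+3$ facets --- would additionally require, for each $k$, the exact comparison of $\eta_k(2d+\ell,d)$ with every $\jj_k(a,m,d)$ satisfying $m(a-m)=d+\ell-1$ (the source of the ``certain combinations of $d$ and $\ell$'') together with a structural argument excluding $d+4$ or more facets at equality.
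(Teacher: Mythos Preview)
Your treatment of part~(ii) and of the pyramid reduction is fine and matches the paper; the Pascal identity you display is exactly the arithmetic driving the paper's Claim~1, and the paper likewise singles out $\ell\in\{d-2,d-1\}$ for separate handling in the pyramid case.

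The genuine gap is the non-pyramidal step. Your plan there --- split into decomposable/indecomposable, invoke \cite{WanYos_conditional} for the former, then hunt for a ``reducing face'' (simplex facet, simple vertex, \dots) and track a deletion/contraction --- is not what the paper does, and you leave it as an acknowledged open-ended case analysis. The paper never uses decomposability. Instead, after disposing of pyramids and of multifold pyramids over simple polytopes (\cref{lem:2d+s-dplus3-simple-pyramid}), it fixes a \emph{non-pyramidal non-simple} vertex $v_1$ of maximum degree, chooses a facet $F$ avoiding $v_1$, and splits according to $f_0(F)$. The engine in every case is Xue's counting result (\cref{prop:number-faces-outside-facet}/\cref{cor:number-faces-outside-facet-practical}): given the $r$ vertices $v_1,\dots,v_r$ outside $F$, it manufactures a descending chain of faces $F_i\ni v_i$ and lower-bounds the $k$-faces meeting $X$ by $\sum f_{k-1}(F_i/v_i)$. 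This replaces any ``deletion/contraction'' bookkeeping entirely. When every facet off $v_1$ is small ($\le 2d-2$ vertices), a further idea is needed: a Blind--Blind-style line shelling of the antistar $\mathcal A(v_1)$, applying Xue's bound to the \emph{new} vertices introduced at each shelling step (\cref{cl:3minus1-less-2dplus1}). Your proposal contains neither of these mechanisms, and without them I do not see how to close the indecomposable non-pyramidal case uniformly across $\ell\in[1\ldots d-1]$; the reducing-face heuristic that works for $2d+1$ and $2d+2$ vertices does not obviously scale, and the paper's length is largely the case analysis required even \emph{with} Xue's proposition.

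For the equality statement you sketch at the end, the paper's argument is again driven by Xue's proposition: at equality each $F_i/v_i$ must be a simplex, which pins down exactly how many facets contain each $v_i$ and forces $f_{d-1}(P)=d+3$ (Claim~5). A direct $\eta$-versus-$\rho$ binomial comparison does enter, but only for the $d+2$-facet side; the exclusion of $\ge d+4$ facets on the $\eta$ side comes from this rigidity of the Xue chain, not from an independent structural argument.
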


We resolve \cref{conj:3minus1} in full. The case $\ell=1$   was previously settled independently by Pineda-Villavicencio and Yost~\cite{PinYos22} and by Xue~\cite{Xue24}; see \cref{thm:2dplus1}. The case $\ell=2$ was verified by Pineda-Villavicencio et al.~\cite{PinTriYos24}; see \cref{thm:2dplus2}. In addition, we show that any $d$-dimensional minimiser of \cref{conj:3minus1}(i) must have exactly $d+3$ facets.

Beyond the polytope $J(\ell+1,d)$, we present other minimisers in \cref{sec:additional-minimisers}.

The proof of our lower bound theorem (\cref{thm:3minus1}) is lengthy and combines case analysis, induction, a versatile counting result of Xue~\cite[Prop.~3.1]{Xue21} (see \cref{cor:number-faces-outside-facet-practical}), and a shelling argument inspired by Blind and Blind~\cite{BliBli99} (see \cref{cl:3minus1-less-2dplus1}). Xue's result, which estimates the number of faces containing at least one vertex from a  set of at most $d$ vertices (see \cref{cl:3minus1-minus-2,cl:3minus1-more-2dplus1,cl:3minus1-less-2dplus1}), is applied in several contexts, underscoring its versatility. The shelling argument  may also be applicable to other face-bounding problems involving $d$-polytopes whose facets have a moderate number of vertices---say, at most $2(d-1)$—and may be of independent interest.

In the conclusions (\cref{sec:conclusions}), we explore possible lower bounds for $d$-polytopes with at least $d+3$ facets and between $3d$ and $4d-4$ vertices. In brief, we conjecture that for $d\ge 5$, the minimisers alternate between two constructions:
\begin{itemize}
    \item a $d$-polytope obtained by truncating a \emph{simple edge} (an edge whose endpoints are both simple vertices) from an $(s_1,d-s_1)$-triplex ($2\le s_1\le d$), and
    \item a $d$-polytope obtained by truncating a simple vertex from $J(s_3+1,d)$ ($1\le s_3\le d-3$).
\end{itemize}

\section{Polytopes with exactly $d+2$ facets}
\label{sec:dplus2-facets} 

 The \emph{direct sum} $P\oplus P'$ of a $d$-polytope $P\subset \R^{d}$ and a $d'$-polytope $P'\subset\R^{d'}$, both containing the origin in their relative interiors, is the $(d+d')$-polytope: 
\begin{equation*}
\label{eq:direct-sums}
P\oplus P'=\conv \left(\left\{\cvec{ p\\ 0_{d'}}\in \R^{d+d'}\middle|\;  p\in P\right\}\bigcup \left\{\cvec{ 0_{d}\\p'}\in \R^{d+d'}\middle|\; p'\in P'\right\}\right),    
\end{equation*} 
where $0_{d}$ denotes the zero vector in $\R^{d}$.


Following Gr{{\"u}}nbaum~\cite[Thm.~6.1.4]{Gru03}, we denote the $(d-a)$-fold pyramid over $T(m)\oplus T(a-m)$ by $T_{m}^{d,d-a}$, where $2\le a\le d$ and $1\le m\le \floor{a/2}$; these are precisely the $d$-polytopes with $d+2$ vertices.  Gr{{\"u}}nbaum~\cite[p.~101]{Gru03} also established inequalities for the number of $k$-faces of these polytopes.

\begin{lemma} For $k\in [0\ldots d-1]$, the following hold:
\begin{enumerate}[{\rm (i)}]
\item If $2\le a\le d$ and $1\le m\le \floor{a/2}-1$, then  $f_{k}(T_{m}^{d,d-a})\le f_{k}(T_{m+1}^{d,d-a})$, with strict inequality if and only if $m\le k$.
\item If $2\le a\le d-1$ and $1\le m\le \floor{a/2}$, then  $f_{k}(T_{m}^{d,d-a})\le f_{k}(T_{m}^{d,d-(a+1)})$, with strict inequality if and only if $a-m\le k$.
\end{enumerate}
\label[lemma]{lem:dplus2-vertices-inequalities}
\end{lemma}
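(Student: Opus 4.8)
The plan is to compute $f_k(T_m^{d,d-a})$ explicitly as a function of $a$, $m$, $d$, $k$, and then compare consecutive values by taking differences. Since $T_m^{d,d-a}$ is a $(d-a)$-fold pyramid over $T(m)\oplus T(a-m)$, and pyramid operations shift face numbers in a controlled way (a $k$-face of $\pyr Q$ is either a $k$-face of $Q$ or the pyramid over a $(k-1)$-face of $Q$), it suffices to understand the face numbers of the direct sum $T(m)\oplus T(a-m)$. The direct sum is dual to the Cartesian product, so $f_k\bigl(T(m)\oplus T(a-m)\bigr)=f_{(m+a-m)-1-k}\bigl(T(m)\times T(a-m)\bigr)=f_{a-1-k}\bigl(T(m)\times T(a-m)\bigr)$, and the face numbers of a product of simplices are a standard binomial convolution. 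Carrying the $(d-a)$-fold pyramid through (or, more cleanly, quoting the closed form already recorded in \cref{lem:dplus2facets}, namely $f_k(T_m^{d,d-a})=\jj_k(a,m,d)$ from \eqref{eq:dplus2facets}) gives an expression in which each of the two comparisons reduces to a single manipulation of binomial coefficients.

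For part (i), I would fix $a$ and $d$ and examine
\[
\jj_k(a,m+1,d)-\jj_k(a,m,d)=\binom{d-a+m+1}{k+2}-\binom{d-a+m+2}{k+2}+\binom{d-m}{k+2}-\binom{d-m+1}{k+2},
\]
which by Pascal's rule collapses to $\binom{d-m}{k+1}-\binom{d-a+m+1}{k+1}$. Since $1\le m\le\floor{a/2}-1$ forces $d-a+m+1\le d-m$, this difference is nonnegative, giving the inequality; and it is strictly positive precisely when $\binom{d-m}{k+1}>\binom{d-a+m+1}{k+1}$, i.e.\ when the larger upper index $d-m$ satisfies $d-m\ge k+1$ while $d-a+m+1<d-m$. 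The condition $d-a+m+1<d-m$ holds automatically under the hypothesis on $m$, so strictness is equivalent to $d-m\ge k+1$; a short check rewrites this as $m\le k$ (using that $k\le d-1$ and $m\ge 1$, together with the fact that when $m>k$ both binomials that differ have upper index below $k+1$ or cancel). Part (ii) is entirely analogous: fixing $a$ and $m$, the difference $\jj_k(a+1,m,d)-\jj_k(a,m,d)$ telescopes via Pascal to $\binom{d-a}{k+1}-\binom{d-a+m}{k+1}$, which is $\le 0$ in the wrong direction, so one instead uses the dual bookkeeping — I would recompute directly from the pyramid-over-product description, where increasing $a$ by $1$ replaces the factor $T(a-m)$ by $T(a-m+1)$ and the net change in $f_k$ is a single binomial term $\binom{d-(a-m)-1}{k+1}$-type quantity, nonnegative, and strictly positive exactly when its upper index is at least $k+1$, which rearranges to $a-m\le k$.

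The one genuinely fiddly point — the part I expect to be the main obstacle — is pinning down the \emph{exact} threshold in the "if and only if'' clauses, i.e.\ matching "the surviving binomial has upper index $\ge k+1$'' with the clean stated condition ($m\le k$ in (i), $a-m\le k$ in (ii)), and checking that in the complementary range the difference is genuinely $0$ rather than merely failing the crude bound. This requires being careful about the convention $\binom{n}{j}=0$ when $n<j$ (including $n<0$), and about boundary cases where $m=\floor{a/2}$ or $a=d$ make some indices coincide. I would handle this by splitting into the cases $k\ge m$ versus $k<m$ (resp.\ $k\ge a-m$ versus $k<a-m$) and verifying each directly from the collapsed difference formula; in the "$<$'' case every binomial appearing in the telescoped difference has upper index strictly less than its lower index $k+2$ or $k+1$ and hence vanishes, yielding equality. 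No induction on $d$ is needed — everything is an identity plus a monotonicity observation about single binomial coefficients.
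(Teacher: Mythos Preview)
The paper does not prove this lemma; it attributes the inequalities to Gr\"unbaum~\cite[p.~101]{Gru03} and states them without argument. So there is no paper proof to compare against, and the question is simply whether your argument is correct.

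Your overall strategy is sound, but there is a real error in the key identification. The formula $\jj_k(a,m,d)$ in \cref{lem:dplus2facets} gives the number of $k$-faces of a $d$-polytope with $d+2$ \emph{facets}, namely the $(d-a)$-fold pyramid over $T(m)\times T(a-m)$. That polytope is the \emph{dual} $(T_m^{d,d-a})^{*}$, not $T_m^{d,d-a}$ itself, which has $d+2$ \emph{vertices}. So the correct relation is
\[
f_k\bigl(T_m^{d,d-a}\bigr)=f_{d-1-k}\bigl((T_m^{d,d-a})^{*}\bigr)=\jj_{d-1-k}(a,m,d),
\]
not $f_k(T_m^{d,d-a})=\jj_k(a,m,d)$.

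This misidentification is exactly why your strictness check fails and why part~(ii) appears to go ``in the wrong direction''. In part~(i) your telescoped difference $\binom{d-m}{k+1}-\binom{d-a+m+1}{k+1}$ is strictly positive iff $d-m\ge k+1$, i.e.\ iff $m\le d-k-1$, which is \emph{not} the same as $m\le k$; the ``short check rewrites this as $m\le k$'' is simply false. In part~(ii) the difference $\jj_k(a+1,m,d)-\jj_k(a,m,d)$ actually telescopes to $\binom{d-a+m}{k+1}-\binom{d-a}{k+1}\ge 0$ (you dropped a sign), so there is no wrong-direction problem for $\jj_k$; the issue is only that $\jj_k$ counts faces of the dual.

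The fix is immediate once you insert the duality shift. Repeating your Pascal computation with $k$ replaced by $d-1-k$ gives, for~(i),
\[
f_k\bigl(T_{m+1}^{d,d-a}\bigr)-f_k\bigl(T_{m}^{d,d-a}\bigr)=\binom{d-m}{d-k}-\binom{d-a+m+1}{d-k},
\]
which is nonnegative (since $d-m>d-a+m+1$ under $m\le\floor{a/2}-1$) and strictly positive iff $d-m\ge d-k$, i.e.\ iff $m\le k$. For~(ii) one obtains $\binom{d-a+m}{d-k}-\binom{d-a}{d-k}$, nonnegative since $m\ge 1$, and strictly positive iff $d-a+m\ge d-k$, i.e.\ iff $a-m\le k$. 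Both equality cases then follow as you outlined, by observing that when the threshold is violated all surviving binomials have upper index below the lower index and vanish.
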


The dual polytope $(T_{m}^{d,d-a})^{*}$ of $T^{d,d-a}_{m}$ is a $(d-a)$-fold pyramid over $T(m)\times T(a-m)$. The $d+2$ facets of $(T_{m}^{d,d-a})^{*}$ are described below.

\begin{remark}\label{rmk:dplus2facets-facets}
For $d\ge 2$, $2\le a\le d$, and $1\le m\le \floor{a/2}$,  the $d+2$ facets of the $(d-a)$-fold pyramid $P$ over $T(m)\times T(a-m)$ are:
\begin{enumerate}[{\rm (i)}]
\item $m+1$ facets that are  $(d-a)$-fold pyramids over $T(m-1)\times T(a-m)$, with $f_{0}(P)-(a-m+1)=(a-m)(m-1)+d$ vertices;

\item $a-m+1$ facets that are $(d-a)$-fold pyramids over $T(m)\times T(a-m-1)$, with $f_{0}(P)-(m+1)=m(a-m-1)+d$ vertices;

\item $d-a$ facets that are $(d-a-1)$-fold pyramids over $T(m)\times T(a-m)$, with $f_{0}(P)-1$ vertices.
\end{enumerate}
\end{remark}

\textbf{\cref{conj:3minus1}(ii) is settled in \cref{lem:lower-bound-dplus2-facets,lem:lower-bound-dplus2-facets-extra}}. The proof of \cref{lem:lower-bound-dplus2-facets} is embedded in the proof of \cite[Thm.~4.1]{Xue24} and  also appears as \cite[Lem.~7]{PinTriYos24}.
 
\begin{lemma}[{\cite[Lem.~7]{PinTriYos24}}] For $d\ge 4$, $\ell\in [1\ldots d-3]$, $a=\floor{(d+\ell+1)/2}+1$, and $1\le k\le d-2$, the number \begin{align*}
\hh_{k}(2d+\ell,d)&:=\binom{d+2}{k+2} -\binom{d-a+3}{k+2}-\binom{d-1}{k+2} +\binom{d-a+1}{k+2}\\
&\,=\binom{d+1}{k+1}+\binom{d}{k+1}+\binom{d-1}{k+1} -\binom{d-a+2}{k+1}-\binom{d-a+1}{k+1}
\end{align*}
of $k$-faces  of the $d$-polytope $(T_{2}^{d,d-a})^{*}$ is a lower bound on the number of $k$-faces of $d$-polytopes with $d+2$ facets and \textbf{at least} $2d+\ell$ vertices. 
\label[lemma]{lem:lower-bound-dplus2-facets}
\end{lemma}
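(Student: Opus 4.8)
The plan is to reduce the statement to two monotonicity properties of the quantity $\jj_{k}(a,m,d)$ from \eqref{eq:dplus2facets}, together with one comparison between ``extremal'' polytopes. By \cref{lem:dplus2facets}, a $d$-polytope $P$ with $d+2$ facets is a $(d-a)$-fold pyramid over $T(m)\times T(a-m)$ with $2\le a\le d$, $1\le m\le\floor{a/2}$, $f_{0}(P)=d+1+m(a-m)$, and $f_{k}(P)=\jj_{k}(a,m,d)$. Put $b\eqdef a-m$, so $1\le m\le b$ and $m+b\le d$; using $d-a+m+1=d-b+1$ and $d-a+1=d-m-b+1$ in \eqref{eq:dplus2facets} exhibits $\jj_{k}(a,m,d)$ as an expression in $d,k,m,b$ that is symmetric in $m$ and $b$. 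The hypothesis $f_{0}(P)\ge 2d+\ell$ reads $mb\ge d+\ell-1\eqdef N$; since $b\le a-1\le d-1<N$ whenever $\ell\ge 1$, we cannot have $m=1$, so $m\ge 2$ (this settles \cref{conj:3minus1}(ii)). It therefore suffices to prove $\jj_{k}(a,m,d)\ge\hh_{k}(2d+\ell,d)$ for all $(m,b)$ with $2\le m\le b$, $m+b\le d$, and $mb\ge N$.

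Applying Pascal's rule to \eqref{eq:dplus2facets} gives, for $m\le b$ and $m+b\le d$,
\[
\jj_{k}(m+b+1,m+1,d)-\jj_{k}(m+b,m,d)=\binom{d-m}{k+1}-\binom{d-m-b}{k+1}\;\ge\;0 ,
\]
and, by the $m\leftrightarrow b$ symmetry, $\jj_{k}$ is also non-decreasing when $b$ is increased by $1$ with $m$ fixed. Hence $\jj_{k}(m+b,m,d)$ is coordinatewise non-decreasing on the feasible region, so its minimum is attained on the ``lower-left'' boundary. When $m=2$, monotonicity in $b$ reduces us to $b=\ceil{N/2}$, and $\jj_{k}(\ceil{N/2}+2,2,d)\ge\jj_{k}(\floor{N/2}+2,2,d)=\hh_{k}(2d+\ell,d)$, the last equality because $a=\floor{(d+\ell+1)/2}+1$ satisfies $a-2=\floor{N/2}$. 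When $m\ge 3$, we may likewise assume $b=\max(m,\ceil{N/m})$ is minimal, and one checks that then $b\le\floor{N/2}$.

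The heart of the matter is the case $m\ge 3$. Telescoping the four binomials of \eqref{eq:dplus2facets} yields
\[
\jj_{k}(m+b,m,d)=\binom{d+1}{k+1}+\sum_{j}w_{m,b}(j)\binom{j}{k},
\]
where $w_{m,b}(j)\eqdef\#\{\,l\in[0\ldots m-1]: d-b-l\le j\le d-l-1\,\}$ is the ``trapezoidal'' weight supported on $[d-m-b+1,\,d-1]$, with plateau value $m$ and total mass $mb$. Writing $b^{*}\eqdef\floor{N/2}$, the plan is to establish the tail-domination inequality $\sum_{j\ge t}w_{m,b}(j)\ge\sum_{j\ge t}w_{2,b^{*}}(j)$ for every integer $t$: both weights have upper edge $d-1$ and agree near it; $w_{m,b}$ has the taller plateau (height $m\ge 3$ against $2$), so its partial tail sums grow at least as fast while descending from $d-1$; and summed fully the masses obey $mb\ge N\ge 2b^{*}$. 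Since $j\mapsto\binom{j}{k}$ is non-decreasing, summation by parts (Abel summation) then gives $\jj_{k}(m+b,m,d)\ge\jj_{k}(b^{*}+2,2,d)=\hh_{k}(2d+\ell,d)$, completing the proof.

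I expect the tail-domination step to be the main obstacle: although it holds throughout the range, the two trapezoids degenerate when $N=d+\ell-1$ is small or when $(m,b)$ is nearly balanced, so I anticipate disposing of a finite list of small-$(d,\ell)$ instances (say $d\le 7$) by direct substitution before running the general comparison. Two bookkeeping remarks then complete the picture. First, when $N$ is even the polytope $(T_{2}^{d,d-a})^{*}$ has exactly $d+1+2b^{*}=2d+\ell$ vertices and the bound is attained, whereas when $N$ is odd it has $2d+\ell-1$ vertices; since the argument above actually bounds $f_{k}$ from below for every $d$-polytope with $d+2$ facets and at least $2d+\ell-1$ vertices, the claimed bound for at least $2d+\ell$ vertices follows a fortiori. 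Second, the alternative expression for $\hh_{k}(2d+\ell,d)$ displayed in the statement is obtained from the first by one more application of Pascal's rule.
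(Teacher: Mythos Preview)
Your approach is sound and differs from the paper's in packaging rather than substance. The paper cites \cite[Lem.~7]{PinTriYos24} for this lemma; its sketch for the analogous \cref{lem:lower-bound-dplus2-facets-extra} shows the intended method: pass to the dual, apply the Gr\"unbaum monotonicity inequalities of \cref{lem:dplus2-vertices-inequalities} when the parameters allow, and for the residual case write $f_{k}(Q^{*})-f_{k}(P^{*})=A+B+C$ as a sum of three binomial differences and bound it directly. You do the equivalent in the primal: your Pascal-based monotonicity of $\jj_{k}$ is exactly dual to \cref{lem:dplus2-vertices-inequalities}, and your trapezoidal-weight / Abel-summation step is a repackaging of the $A+B+C$ bound. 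Your formulation is arguably cleaner, and the tail-domination inequality you flag as the main obstacle in fact admits a uniform proof with no small-case check: with $S(s)=\sum_{j\ge d-s}w_{m,b}(j)$ one verifies $S(s)\ge 2s-1$ on each of the three pieces of $S$ (the inequalities reduce to $(s-1)(s-2)\ge 0$ for $1\le s\le m$, to $(m-2)(2s-m-1)\ge 0$ for $m\le s\le b$, and to $(m-2)(b-2)\ge (r-2)(r-3)/2$ with $r=m+b-s\in[1,m]$ for $b\le s\le m+b-1$), after which $S(s)=mb\ge N\ge 2b^{*}$ handles the plateau. So you may drop the anticipated finite check entirely.
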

The function $\tau_{k}(2d+\ell,d)$ in \cref{lem:lower-bound-dplus2-facets} is obtained by setting $a=\floor{(d+\ell+1)/2}+1$ and $m=2$ in the function $\rho_{k} (a,m,d)$ from \cref{lem:dplus2facets}. 

We now address the cases of $3d-2$ and $3d-1$ vertices in \cref{conj:3minus1}(ii). For $d=4,5$, since $2\le a\le d$ and $1\le m\le \floor{a/2}$, a $d$-polytope $(T_{m}^{d,d-a})^{*}$ cannot have more than $3d-3$ vertices. Additionally, for $d=6$, the polytope $(T_{m}^{d,d-a})^{*}$ cannot exceed  $3d-2$ vertices.

\begin{lemma} Let $d\ge 6$ when $\alpha=0$ and  $d\ge 7$ when $\alpha=1$, and let $1\le k\le d-2$. If $a=d+2-\floor{(d-\alpha)/3}$, then the number \begin{align*}
\hh_{k}(3d-2+\alpha,d)&:=\binom{d+1}{k+1}+\binom{d}{k+1}+\binom{d-1}{k+1}+\binom{d-2}{k+1} -\binom{d-a+3}{k+1}\\
&\quad -\binom{d-a+2}{k+1}-\binom{d-a+1}{k+1}
\end{align*}
of $k$-faces  of the $d$-polytope $(T_{3}^{d,d-a})^{*}$ is a lower bound on the number of $k$-faces of $d$-polytopes with $d+2$ facets and \textbf{at least} $3d-2+\alpha$ vertices. 
\label[lemma]{lem:lower-bound-dplus2-facets-extra}
\end{lemma}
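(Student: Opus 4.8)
The strategy mirrors the proof of \cref{lem:lower-bound-dplus2-facets}: we want to show that, among all $d$-polytopes with exactly $d+2$ facets and at least $3d-2+\alpha$ vertices, the one minimising the number of $k$-faces (for each $k\in[1\ldots d-2]$) is the specific polytope $(T_3^{d,d-a})^*$ with $a=d+2-\floor{(d-\alpha)/3}$, and that its $k$-face numbers are exactly $\hh_k(3d-2+\alpha,d)$. By \cref{lem:dplus2facets}, any $d$-polytope with $d+2$ facets is $(T_m^{d,d-a'})^*$ for some $2\le a'\le d$ and $1\le m\le\floor{a'/2}$, and its $k$-face count is $\rho_k(a',m,d)$ with $f_0=d+1+m(a'-m)$. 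So the problem reduces to a finite optimisation over the pairs $(a',m)$ subject to the vertex constraint $m(a'-m)\ge 2d-1+\alpha$. First I would translate everything into the dual: by \cref{lem:dplus2-vertices-inequalities}, both partial ``monotonicity'' operations $m\mapsto m+1$ and $a'\mapsto a'+1$ weakly increase every $f_k$ of $T_m^{d,d-a'}$, hence (dualising) weakly increase every $f_k$ of $(T_m^{d,d-a'})^*$ as well — the operations on the dual side are exactly the two moves that raise $a'$ or raise $m$. The point of the inequalities is that we may always push $(a',m)$ to be coordinatewise-minimal subject to the vertex lower bound.

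\textbf{Key steps.} (1) Record the vertex count: $f_0\big((T_m^{d,d-a'})^*\big)=d+1+m(a'-m)$, so the constraint ``at least $3d-2+\alpha$ vertices'' is $m(a'-m)\ge 2d-1+\alpha$. (2) Show that for fixed product $m(a'-m)$, increasing $m$ (equivalently moving toward a more ``balanced'' $T(m)\times T(a'-m)$) does not decrease any $f_k$; combined with \cref{lem:dplus2-vertices-inequalities} this means the minimiser has $m$ as small as the constraint permits. Here $m=2$ would force $a'-2\ge\lceil(2d-1+\alpha)/2\rceil$, i.e. $a'\ge d+ \lceil\cdots\rceil$ which exceeds $d$ for the stated ranges of $d$ — so $m=2$ is infeasible and the minimal feasible choice is $m=3$. (3) With $m=3$ fixed, the constraint becomes $a'-3\ge\lceil(2d-1+\alpha)/3\rceil$; one checks that $\lceil(2d-1+\alpha)/3\rceil = d-1-\floor{(d-\alpha)/3}$ for the relevant residues, so the smallest feasible $a'$ is $a'=d+2-\floor{(d-\alpha)/3}$ — precisely the value $a$ in the statement — and that this $a$ lies in $[2\ldots d]$ and satisfies $m=3\le\floor{a/2}$ under the hypotheses $d\ge 6$ (resp.\ $d\ge 7$). (4) Apply \cref{lem:dplus2-vertices-inequalities} (dualised) to conclude that $f_k\big((T_m^{d,d-a'})^*\big)\ge f_k\big((T_3^{d,d-a})^*\big)$ for all feasible $(a',m)$ and all $k$. (5) Finally, evaluate $\rho_k(a,3,d)$ and verify it equals the displayed $\hh_k(3d-2+\alpha,d)$; this is the telescoping identity $\binom{d+2}{k+2}-\binom{d+2-3}{k+2}=\binom{d+1}{k+1}+\binom{d}{k+1}+\binom{d-1}{k+1}$ applied to each of the three binomial differences in $\rho_k$, accounting for the $(d-a)$-fold pyramid apices.

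\textbf{Main obstacle.} The genuinely delicate point is Step (2): I need the statement that, for fixed $a'$ and increasing $m$ up to $\floor{a'/2}$, every $f_k$ weakly increases (strictly for $m\le k$) — which is exactly \cref{lem:dplus2-vertices-inequalities}(i) on the primal side — but I must also argue across different values of the \emph{product} $m(a'-m)$, not just fixed $a'$. The careful way to handle this is not to fix the product at all: simply observe that the feasible region $\{(a',m): 2\le a'\le d,\ 1\le m\le\floor{a'/2},\ m(a'-m)\ge 2d-1+\alpha\}$ has a unique coordinatewise-minimal element, namely $(a,3)$, because decreasing $a'$ or decreasing $m$ each decreases the product $m(a'-m)$ (one must check monotonicity of $m\mapsto m(a'-m)$ on the range $m\le\floor{a'/2}$, where it is increasing). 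Then the two monotonicity inequalities of \cref{lem:dplus2-vertices-inequalities}, applied dually and composed along any monotone lattice path from an arbitrary feasible $(a',m)$ down to $(a,3)$, yield the bound. The arithmetic in Step (3) identifying $\lceil(2d-1+\alpha)/3\rceil$ with $d-1-\floor{(d-\alpha)/3}$ and checking the boundary cases $d=6,\alpha=0$ and $d=7,\alpha=1$ (to ensure $3\le\floor{a/2}$, i.e.\ $a\ge 6$) is routine but must be done explicitly, and the failure of these for smaller $d$ is the reason for the dimension hypotheses.
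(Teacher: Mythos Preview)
Your approach has a genuine gap in Step~(2)/(4). The assertion that the feasible region
\[
\set*{(a',m)\;:\;2\le a'\le d,\ 1\le m\le\floor{a'/2},\ m(a'-m)\ge 2d-3+\alpha}
\]
has $(a,3)$ as its \emph{unique} coordinatewise-minimal element is false. (Incidentally, the constraint is $m(a'-m)\ge 2d-3+\alpha$, not $2d-1+\alpha$.) Upward-closedness of the region --- which is all that your monotonicity observation gives --- does not force a unique minimum. Concretely, take $d=20$, $\alpha=0$: then $a=16$ and $(16,3)$ is feasible with product $39$, but $(14,4)$ is also feasible with product $40$, and $(14,4)$ is incomparable to $(16,3)$ in the product order. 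There is no monotone lattice path from $(14,4)$ down to $(16,3)$, so \cref{lem:dplus2-vertices-inequalities} alone cannot compare $f_k\big((T_4^{20,6})^*\big)$ to $f_k\big((T_3^{20,4})^*\big)$.

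This is exactly the case the paper's proof isolates and handles separately. In the paper's notation ($Q=(T_m^{d,d-b})^*$), your lattice-path argument covers the cases $m=3$ (where the vertex constraint forces $b\ge a$) and $m\ge 4$ with $b\ge a$, via \cref{lem:dplus2-vertices-inequalities}(i)--(ii) composed. But when $m\ge 4$ and $b<a$, the paper resorts to a direct computation: writing
\[
f_k(Q^*)-f_k(P^*)=\underbrace{\brac*{-\tbinom{d-b+m+1}{d-k+1}+\tbinom{d-a+4}{d-k+1}}}_{A}+\underbrace{\brac*{-\tbinom{d-m+1}{d-k+1}+\tbinom{d-2}{d-k+1}}}_{B}+\underbrace{\brac*{\tbinom{d-b+1}{d-k+1}-\tbinom{d-a+1}{d-k+1}}}_{C}
\]
and showing $A+B+C\ge 0$ by the same binomial-inequality manipulation as in \cite[Lem.~7]{PinTriYos24}. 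This step is where the actual work lies, and your plan does not supply it.
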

\begin{proof} The argument proceeds similarly to the proof of \cite[Lem.~7]{PinTriYos24}. We provide a sketch here. 
  
  Let $P:=(T_{3}^{d,d-a})^{*}$. Then $f_{0}(P)=4d-2-3\floor{(d-\alpha)/3}$ vertices. Consider a $d$-polytope  $Q$, distinct from $P$, with $d+2$ facets and at least $3d-2+\alpha$ vertices. By \cref{lem:dplus2facets}, $Q$ has the form $Q=(T_{m}^{d,d-b})^{*}$,  where $2\le b\le d$ and $1\le m\le \floor{b/2}$. Since $f_0(Q)=d+1+m(b-m)\ge 3d-2+\alpha$ and $b\le d$, we must have $m\ge 3$.
  
  Suppose  $f_{0}(Q)\ge f_{0}(P)$. We aim to show that $f_{k}(P)\le f_{k}(Q)$ for $1\le k\le d-2$.  We work in the dual setting, proving that $f_{k}(P^{*})\le f_{k}(Q^{*})$ for $1\le k\le d-2$. From $f_{d-1}(Q^{*})\ge f_{d-1}(P^{*})$, we obtain
\begin{equation*}
m(b-m)\ge 3(a-3).
\end{equation*}
 Suppose $m=3$. Then $b\ge a$, and \cref{lem:dplus2-vertices-inequalities}(ii) implies  $f_{k}(P^{*})\le f_{k}(Q^{*})$ for $1\le k\le d-2$, as desired. Thus, we assume  $m\ge 4$, so $b\ge 8$.

If $b\ge a$, then again \cref{lem:dplus2-vertices-inequalities}(ii)  gives $f_{k}(T_{m}^{d,d-a})\le f_{k}(Q^{*})$ for $0\le k\le d-2$. Additionally, \cref{lem:dplus2-vertices-inequalities}(i) implies $f_{k}(T_{m}^{d,d-a})\ge f_{k}(P^{*})$ for $1\le k\le d-2$, and this case is resolved. We may therefore assume that $a> b$.

In this case, we compute
\begin{equation*}
\label{eq:lower-bound-dplus2-facets-1a}
\begin{aligned}
f_{k}(Q^{*})-f_{k}(P^{*})&=\underbrace{\brac*{-\binom{d-b+m+1}{d-k+1}+\binom{d-a+4}{d-k+1}}}_{=A}+\underbrace{\brac*{-\binom{d-m+1}{d-k+1}+\binom{d-2}{d-k+1}}}_{=B} \\
&+\underbrace{\brac*{\binom{d-b+1}{d-k+1}-\binom{d-a+1}{d-k+1}}}_{=C}.
\end{aligned}
\end{equation*}
Continuing as in the proof of \cite[Lem.~7]{PinTriYos24}, we find that $A+B+C\ge 0$. 
\end{proof}

The function $\tau_{k}(3d-2+\alpha,d)$ in \cref{lem:lower-bound-dplus2-facets-extra} is obtained by setting $a=d+2-\floor{(d-\alpha)/3}$ and $m=3$ in the function $\rho_{k} (a,m,d)$ from \cref{lem:dplus2facets}. 
		
	
\begin{remark} Comparing the functions from \cref{lem:lower-bound-dplus2-facets} and \cref{lem:lower-bound-dplus2-facets-extra}, we find   that,  for $d\ge 4$ and each $1\le k\le d-2$, 

\begin{equation*}
\tau_{k}(3d-3,d)\le\tau_{k}(3d-2,d)\le \tau_{k}(3d-1,d).
\end{equation*}
The first inequality is settled in \cref{lem:combinatorial-ineq}(i), while the second inequality  follows from the definitions of $\tau_{k}(3d-2,d)$ and $\tau_{k}(3d-1,d)$.
\label{rmk:functions-dplus2-facets}
\end{remark}

We conclude this section by extending the function $\tau_{k}(2d+\ell,d)$ beyond $\ell=d-1$:
\begin{equation*}
\label{eq:3minus1-tau-function-extension}
\tau_{k}(2d+\ell,d)=\tau_{k}(3d-1,d),\;\text{for $\ell\ge d$}.
\end{equation*} 

Henceforth, we concentrate on $d$-polytopes with at least $d+3$ facets.

\section{Additional minimisers with $d+3$ facets}
\label{sec:additional-minimisers} 

For the case of $2d+2$ vertices (that is, $\ell=2$), truncating a nonsimple vertex of  a $(2,d-2)$-triplex results in a $d$-polytope $A(d)$~\cite{PinUgoYos16a}, which can also be realised as a prism over a copy of $(2,d-3)$-triplex. Thus  $A(d)$  has  the same $f$-vector as $J(3,d)$, the polytope obtained by truncating  a simple vertex in a $(3,d-3)$-triplex. Recall that the \emph{$f$-vector} of a $d$-polytope $P$, denoted  $f(P)$,  is the sequence
$(f_0, f_1, \dots, f_{d-1})$.

For the case of $3d-2$ vertices (that is, $\ell=d-2$), we present two additional minimisers. First we define the \emph{vertex figure} of a polytope $P$ at a vertex $v$  as the polytope $P/v:=H\cap P$, where $H$ is a hyperplane that separates $v$ from the other vertices of $P$. There is a bijection between the $k$-faces of $P$ that contain $v$ and the $(k-1)$-faces of $P/v$.

The paper \cite{PinUgoYos16a} introduces the $d$-polytope $\Sigma(d)$ as the convex hull of \[\set*{0,e_1,e_1+e_k,e_2,e_2+e_k,e_1+e_2,e_1+e_2+2e_k: 3\le k\le d},\]
where $\{e_i\}$ is the standard basis of $\mathbb{R}^d$.    
The polytope $\Sigma(d)$ has $d+3$ facets, including $d+1$ facets that contain its unique nonsimple vertex $v_1$, whose vertex figure is a simplicial $(d-1)$-prism, and two simplicial prisms $F_1$ and  $F_2$ that do not contain $v_1$ and intersect along a simplex ridge. Thus, for $d\ge 3$ and $0\le k \le d-2$, the number of $k$-faces of $\Sigma(d)$ is 
\begin{equation}
\label{eq:Sigma_d}
    \begin{aligned}
    f_k(\Sigma(d))&=f_k(F_1)+ f_k(F_2) - f_k(F_1 \cap F_2) + f_{k-1}(\Sigma(d)/v_1) \\
    & = 2\brac*{\binom{d}{k+1}+\binom{d-1}{k+1}-\binom{1}{k+1}}- \binom{d-1}{k+1}\\
    & +\brac*{\binom{d}{k}+\binom{d-1}{k}-\binom{1}{k}} \\
    & = \binom{d+1}{k+1}+2\binom{d}{k+1}-\binom{2}{k+1}-\binom{1}{k+1}.
    \end{aligned}
\end{equation}
Hence, $\Sigma(d)$ has the same $f$-vector as $J(d-1,d)$, the polytope obtained by truncating  a simple vertex in a $(d-1,1)$-triplex. 


We now introduce the wedge construction. Let $P$ be a $d$-polytope embedded in the hyperplane $x_{d+1}=0$ of $\R^{d+1}$, and let $F$ be a proper face of $P$. Consider the halfcylinder $C:=P\times [0,\infty)\subset \R^{d+1}$. Let $H'$ be a hyperplane passing through $F \times \{0\}$ that cuts $C$ into a bounded and an unbounded region. The \emph{wedge} of $P$ at $F$, denoted $\wed_{F}(P)$, is defined as the bounded part. The sets $P$ and $H'\cap C$, the \emph{bases} of $\wed_{F} (P)$, define facets of $\wed_{F} (P)$ that are combinatorially isomorphic to $P$ and intersect at the face $F\times \set{0}$. The wedge $W$  over a $d$-polytope $P\times \set{0}\subseteq \R^{d+1}$ at a face $F\times \set{0}$ of $P\times \set{0}$ is combinatorially isomorphic to a prism $Q$ over $P\times \set{0}$ in which the face prism $(F\times \set{0})$ of $Q$ has collapsed to $F\times \set{0}$. For further information, refer to \cite[Sec.~2.6]{Pin24}. We will use the following basic facts: 

\begin{lemma}\label[lemma]{wedge}
    Let $\wed_{F}(P)$ be a wedge of a $d$-polytope $P$ at a face $F$ of $P$.
    \begin{enumerate}[{\rm (i)}]
        \item A $k$-face of $\wed_{F}(P)$ is  a $k$-face of one of the bases of $\wed_{F}(P)$,
        or the wedge of a $(k-1)$-face $F'$ of P at the proper face $F\cap F'$, or a prism over a $(k-1)$-face of $P$ disjoint from $F$.
        \item If $F$ is a facet of $P$, then for each value of $k\in [0\ldots d+1]$ we have
        $$f_k(\wed_{F}(P))=2f_k(P)+f_{k-1}(P)-f_k(F)-f_{k-1}(F).$$
    \end{enumerate}
\end{lemma}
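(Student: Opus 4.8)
The statement to prove is \cref{wedge}, which records two basic facts about the wedge construction: a description of the $k$-faces of $\wed_F(P)$ (part (i)) and, when $F$ is a facet, a formula for $f_k(\wed_F(P))$ (part (ii)).

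My plan is to work from the combinatorial model of the wedge already recalled in the text: $\wed_F(P)$ is isomorphic to the prism $Q = P \times [0,1]$ in which the subprism $F \times [0,1]$ has been collapsed onto $F \times \{0\}$. For part (i), I would first recall the standard face structure of the prism $Q$: every face of $Q = P \times [0,1]$ is either $G \times \{0\}$, or $G \times \{1\}$, or $G \times [0,1]$ for a face $G$ of $P$; a $k$-face of $Q$ is thus either a $k$-face of a base (the $G \times \{0\}$ and $G \times \{1\}$ cases) or a prism $G \times [0,1]$ over a $(k-1)$-face $G$ of $P$. Then I would track what the collapse does: faces of $Q$ disjoint from the collapsed prism $F \times [0,1]$ are unaffected, so they contribute $k$-faces of the bottom base $P$, $k$-faces of the top base (which is a copy of $P$ with the face $F$ collapsed — i.e.\ the ``wedge base'' $H' \cap C$), and prisms $G \times [0,1]$ over $(k-1)$-faces $G$ of $P$ with $G \cap F = \emptyset$. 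The remaining faces are those meeting $F \times [0,1]$: for a face $G$ of $P$ with $G \cap F \neq \emptyset$, the prism $G \times [0,1]$ collapses along its subprism $(G \cap F) \times [0,1]$, and this collapsed prism is by definition the wedge $\wed_{G \cap F}(G)$; such a face is a $k$-face of $\wed_F(P)$ precisely when $G$ is a $(k-1)$-face of $P$. This yields exactly the three types listed in (i), modulo checking the boundary cases $G = F$ and $G \subsetneq F$ (where the ``wedge'' degenerates to $G$ itself sitting in the bottom base) to make sure no face is double-counted or omitted — this bookkeeping of degenerate cases is the one place that needs a little care, though it is not deep.

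For part (ii), assume $F$ is a facet of $P$. I would compute $f_k(\wed_F(P))$ by summing the contributions from part (i). Starting from the prism: $f_k(P \times [0,1]) = 2 f_k(P) + f_{k-1}(P)$ (two copies of each face in the two bases, plus one prism over each $(k-1)$-face). The collapse of $F \times [0,1]$ removes exactly the faces of the prism that live inside that subprism. The subprism $F \times [0,1]$ is itself isomorphic to the prism over $F$, so it has $2 f_k(F) + f_{k-1}(F)$ faces of dimension $k$; collapsing it identifies $F \times \{0\}$ with $F \times \{1\}$ and squashes each prism $G \times [0,1]$ over a face $G$ of $F$ down to $G$. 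Counting the net loss: we lose one of the two copies of each $k$-face of $F$ (the top copy is identified with the bottom), contributing $-f_k(F)$, and we lose each prism $G \times [0,1]$ over a $(k-1)$-face $G$ of $F$, which collapses to the already-present $(k-1)$-face $G$ of the bottom base — wait, a prism over a $(k-1)$-face is a $k$-face, and it collapses to a $(k-1)$-face, so it is simply deleted from the count of $k$-faces, contributing $-f_{k-1}(F)$. Hence $f_k(\wed_F(P)) = 2f_k(P) + f_{k-1}(P) - f_k(F) - f_{k-1}(F)$, as claimed. The crucial point making this clean is that $F$ is a facet, so every face of $F$ is a proper face of $P$ and no further coincidences arise; I would state explicitly why the ``collapsed'' faces coincide with faces already counted in the bottom base $P$.

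The main obstacle I anticipate is purely the careful case analysis in part (i): making sure the trichotomy (face of a base / wedge of a lower-dimensional face / prism over a disjoint lower-dimensional face) is exhaustive and non-overlapping, especially handling the faces $G$ with $\emptyset \neq G \cap F$ ranging from $G \cap F = G$ (i.e.\ $G \subseteq F$) up to $G \cap F$ a proper nonempty face of $G$. Once (i) is pinned down, (ii) is a routine inclusion–exclusion over the three families, and the facet hypothesis is exactly what prevents any subtlety. I would present (i) first and then derive (ii) either directly from the prism-minus-collapse count or, equivalently, by summing the three families in (i); I expect to do the former as it is shorter.
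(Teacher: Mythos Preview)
The paper does not prove this lemma; it is stated as a ``basic fact'' about the wedge construction, with the reader referred to \cite[Sec.~2.6]{Pin24} for further information. So there is no in-paper argument to compare against.

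Your approach is correct and is the standard one. The prism model $Q=P\times[0,1]$ with the subprism $F\times[0,1]$ collapsed is exactly the combinatorial description the paper invokes, and your face trichotomy for part (i) follows directly from the face structure of a product: every nonempty face of $Q$ is $G\times\{0\}$, $G\times\{1\}$, or $G\times[0,1]$ for a face $G$ of $P$, and the collapse acts on these according to whether $G\subseteq F$, $G\cap F$ is a proper nonempty face of $G$, or $G\cap F=\emptyset$. The bookkeeping you flag (when $G\subseteq F$, the three prism faces $G\times\{0\}$, $G\times\{1\}$, $G\times[0,1]$ all collapse to the single face $G$ of the bottom base) is routine and causes no trouble. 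Your derivation of (ii) from the prism count $2f_k(P)+f_{k-1}(P)$ minus the overcount $f_k(F)+f_{k-1}(F)$ is clean; the facet hypothesis is not strictly needed for the formula (your collapse count works for any proper face $F$), but it is all the paper requires.
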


Let $C(d)$ denote the polytope obtained by truncating one simple edge of a  $(2,d-2)$-triplex. It has $3d-2$ vertices and $d+3$ facets. 


The polytopes $C(2)$ and $\Sigma(2)$ are both quadrilaterals, and similarly $C(3)$ and $\Sigma(3)$ are the same polytope. However, for $d\ge 4$, the polytopes $C(d)$ and $\Sigma(d)$ are distinct. Furthermore, for $d\ge 4$, each of $C(d)$ and $\Sigma(d)$ is a wedge of  a $(d-1)$-dimensional version of the polytope at a face that is $(d-2)$-dimensional version of the polytope: $C(d)=W_{C(d-2)}(C(d-1))$ and $\Sigma(d)=W_{\Sigma(d-2)}(\Sigma(d-1))$. See \cref{wedge}. Hence, both $C(d)$ and $\Sigma(d)$  share the same $f$-vector. 

\begin{figure}             
\begin{center}     
\includegraphics[scale=.9]{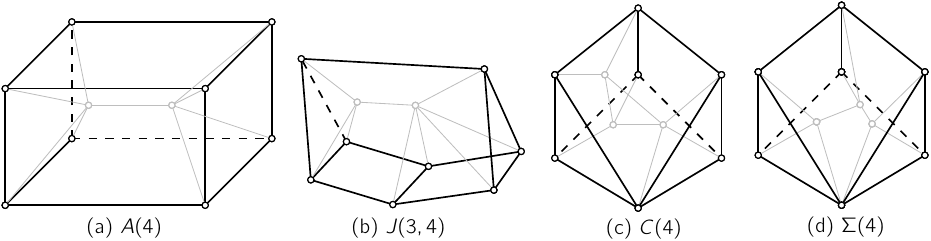}  
\end{center}
\caption{Schlegel diagrams of polytopes. {(a)}  The polytope $A(4)$. {(b)}  The polytope $J(3,4)$. (c) The polytope $C(4)$. (d) The polytope $\Sigma(4)$.}
\label{fig:additional-minimisers}   
\end{figure}

\Cref{fig:additional-minimisers} depicts 4-dimensional versions of these additional minimisers.

\subsection{Multifold pyramids with at least $d+3$ facets whose base is a simple polytope}

 We address the case of pyramids over simple polytopes in \cref{conj:3minus1}(i). Our approach  relies on the lower bound theorem for simple polytopes~\cite{Bar71,Bar73} (\cref{thm:simple-lbt}) and on the notion of \emph{truncation polytopes}---polytopes obtained from simplices by successive truncation of vertices. 


\begin{theorem}[Simple polytopes, Barnette (1971--73)]
\label{thm:simple-lbt} 
Let $d\ge 2$ and let $P$ be a simple $d$-polytope with $f_{d-1}$ facets. Then
\begin{equation*}
 f_k(P)\ge\begin{cases}
(d-1)f_{d-1}-(d+1)(d-2),& \text{if $k=0$}; \\
\binom{d}{k+1}f_{d-1}-\binom{d+1}{k+1}(d-1-k),& \text{if $k\in [1\ldots d-2]$}.
\end{cases}
\end{equation*}
If, for $d\ge 4$,  $f_{k}(P)$ achieves equality for some $k\in [0\ldots d-2]$, then $P$ must be a truncation polytope. For $d=2,3$, equality holds for every simple $d$-polytope.
\end{theorem}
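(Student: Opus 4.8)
The plan is to prove the lower bound theorem for simple $d$-polytopes (\cref{thm:simple-lbt}) together with its equality characterisation. I would follow Barnette's original strategy, which works on the dual side: passing to the dual, a simple $d$-polytope with $f_{d-1}$ facets becomes a simplicial $d$-polytope with $f_{d-1}$ vertices, and the inequalities become the classical Lower Bound Theorem for simplicial polytopes, stating that $f_{j}\ge \phi_{j}(n,d)$ for $j\in[1\ldots d-1]$, where $n=f_{0}$ of the simplicial polytope and $\phi_{j}$ is the $f$-vector of a stacked polytope. The dual correspondence $f_{k}(P)=f_{d-1-k}(P^{*})$ turns the claimed formulas into exactly $f_{d-1-k}(P^{*})\ge \phi_{d-1-k}(f_{d-1},d)$; one checks the binomial identity that $\phi_{d-1-k}(n,d)=\binom{d}{k+1}n-\binom{d+1}{k+1}(d-1-k)$ for $k\in[1\ldots d-2]$ and $\phi_{d-1}(n,d)=(d-1)n-(d+1)(d-2)$ for $k=0$. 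So the first step is this purely formal dualisation and binomial bookkeeping.

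The substantive step is then to prove the simplicial Lower Bound Theorem itself. Here I would present Barnette's induction. The base cases $d=2,3$ are immediate (for $d=3$ every simple polytope is a truncation polytope and equality always holds; dually, every simplicial $3$-polytope attains the bound by Euler's formula). For the inductive step on $d\ge 4$, the cleanest route is McMullen--Walkup's rigidity-free argument or Kalai's algebraic shifting, but to stay elementary I would use Barnette's vertex-deletion / facet-splitting approach: take a simplicial $d$-polytope $Q=P^{*}$, pick a vertex $v$ of minimum degree, and analyse the antistar and the vertex figure $Q/v$, which is a simplicial $(d-1)$-polytope; combine the inductive bound on $Q/v$ with a counting of faces incident and non-incident to $v$. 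A convenient packaging is via the number $\psi_{j}$ or via the $g$-theorem inequality $g_{2}\ge 0$, i.e. $f_{1}\ge df_{0}-\binom{d+1}{2}$ dually; the higher $f_{j}$ bounds then follow by the standard "MPW reduction" that expresses each $f_{j}$ as a nonnegative combination forcing the stacked-polytope value once $f_{1}$ is minimised.

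For the equality characterisation, I would argue that equality in any one of the bounds for $k\in[0\ldots d-2]$ (with $d\ge4$) forces equality in the $k=d-2$ bound (dually, $f_{1}=d f_{0}-\binom{d+1}{2}$ in the simplicial polytope), because the MPW reduction realises each $f_{j}$-inequality as a consequence of the $f_{1}$-inequality with strictly positive coefficients on the slack; hence no slack anywhere. Then one invokes the equality case of the simplicial LBT: a simplicial $d$-polytope ($d\ge4$) with $f_{1}=df_{0}-\binom{d+1}{2}$ is a stacked polytope (this is the classical characterisation, due to Barnette and, for the full strength, Kalai). Dualising, stacked simplicial polytopes correspond exactly to truncation polytopes, completing the proof. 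I would cite \cite{Bar71,Bar73} for all of this and keep the write-up at the level of "this is the dual of the Lower Bound Theorem; the equality case is its dual equality case."

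The main obstacle is that a fully self-contained proof of the simplicial Lower Bound Theorem and its rigidity/equality characterisation is itself a substantial theorem; in practice the honest move is to cite Barnette's papers \cite{Bar71,Bar73} (and, if one wants the sharpest equality statement, Kalai's rigidity proof) rather than reprove it. So the real "work" in this paper is only the dualisation and the verification that the binomial expressions match the stacked-polytope $f$-numbers $\phi_{j}(n,d)$ — routine but must be done carefully, especially checking that the $k=0$ case corresponds correctly to $f_{d-1}$ of the dual and that the edge case $d=2,3$ ("equality for every simple polytope") is consistent with "truncation polytope" only being forced for $d\ge4$.
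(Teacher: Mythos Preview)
Your proposal is correct, and in fact goes further than the paper does: the paper does not prove \cref{thm:simple-lbt} at all but simply quotes it as Barnette's theorem with a citation to \cite{Bar71,Bar73}, using it as a black box in the proof of \cref{lem:2d+s-dplus3-simple-pyramid}. Your plan to dualise to the simplicial Lower Bound Theorem, verify that the stated binomial expressions match the stacked-polytope $f$-numbers, and then cite Barnette (and Kalai for the equality case) for the substantive content is exactly the standard way one would justify this statement, and your own final remark---that the honest move is to cite rather than reprove---is precisely what the paper does.
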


 The next lemma extends \cite[Lem.~17]{PinTriYos24}. Before proving the lemma, we extend the function $\eta_{k}(2d+\ell,d)$ beyond $\ell=d-1$:
\begin{equation}
\label{eq:3minus1-function-extension}
\eta_{k}(2d+\ell,d)=\eta_{k}(3d-1,d),\;\text{for $\ell\ge d$}.
\end{equation}

 \begin{lemma} Let $d\ge 3$, $0\le t\le d-2$,  and $\ell\ge 1$. Additionally, let $P$ be a $t$-fold pyramid over a simple $(d-t)$-polytope  with  $2d+\ell$ vertices and at least  $d+3$ facets. Then, for  each $k\in [1\ldots d-2]$, the following hold:  
 \begin{enumerate}[{\rm (i)}]
 \item If $t=0$, then $f_{0}(P)\ge 3d-1$ and $f_{k}(P)\ge \eta_{k}(3d-1,d)$. Equality  holds for some $k\in [1\ldots d-2]$ if and only if $P$ is either a cube or the simple $d$-polytope $J(d,d)$ ($d\ge 3$).  
  \item If $t\ge 1$, then $f_{k}(P)\ge \eta_{k}(2d+\ell,d)$ for $k\in [1\ldots d-2]$. Equality    holds for some $k\in [1\ldots d-2]$ if and only if $k\ge d-\ell$ and  $P$ is a $t$-fold pyramid over $J(d-t, d-t)$ with $t$ satisfying $f_0(P)=3d-2t-1\ge 2d+\ell$.
 \end{enumerate}
 \label[lemma]{lem:2d+s-dplus3-simple-pyramid}
 Also, equality in either case holds for some $k\in [1\ldots d-2]$ only if $P$ has exactly $d+3$ facets.
 \end{lemma}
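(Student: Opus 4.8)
The plan is to prove \cref{lem:2d+s-dplus3-simple-pyramid} by reducing everything to the base case $t=0$ (a simple $d$-polytope) via the pyramid formula, and then to run a careful counting argument against Barnette's lower bound theorem (\cref{thm:simple-lbt}).

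\medskip

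\textbf{Step 1: The case $t=0$.} Here $P$ is a simple $d$-polytope with $2d+\ell$ vertices and at least $d+3$ facets. First I would use the dual statement of the simple LBT: since $P$ is simple, $f_0(P)$ is controlled by $f_{d-1}(P)$ via a Dehn--Sommerville/duality bookkeeping. Concretely, for a simple $d$-polytope the relation $d\cdot f_0 = 2 f_1$ together with Barnette's bound $f_1(P)\ge \binom{d}{2}f_{d-1}-\binom{d+1}{2}(d-2)$ and $f_{d-1}\ge d+3$ forces $f_0(P)\ge 3d-1$ (one checks that $f_{d-1}=d+3$ already yields $3d-1$, and each extra facet only increases the lower bound). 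This gives the first assertion of (i). For the face counts $f_k(P)\ge \eta_k(3d-1,d)$: apply \cref{thm:simple-lbt} with $f_{d-1}=d+3$ to get a lower bound, and verify algebraically that $\binom{d}{k+1}(d+3)-\binom{d+1}{k+1}(d-1-k)=\eta_k(3d-1,d)$ — this is exactly the $f$-vector of the truncation polytope with $d+3$ facets, which is $J(d,d)$ (a $d$-fold iterated truncation / the "cube-like" truncation polytope). When $f_{d-1}>d+3$ the Barnette bound strictly exceeds $\eta_k(3d-1,d)$ for $k\in[1\ldots d-2]$, so equality forces $f_{d-1}=d+3$ and, by the equality clause of \cref{thm:simple-lbt}, $P$ is a truncation polytope; the only truncation polytopes with exactly $d+3$ facets are the cube (for $k$ in a certain range) and $J(d,d)$. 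One must be slightly careful about the two named minimisers: the $d$-cube has $2^d$ vertices, which equals $3d-1$ only for $d=3$ (where cube $=J(3,3)$), so for $d\ge4$ the cube is ruled out by the vertex count unless $2^d=2d+\ell$ with the corresponding $\ell$; I would state the equality characterisation to cover both, matching the lemma.

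\medskip

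\textbf{Step 2: Reduce $t\ge 1$ to $t=0$.} If $P=\pyr^t(B)$ with $B$ a simple $(d-t)$-polytope, then $f_0(P)=f_0(B)+t$ and, more importantly, $f_k(P)$ is a fixed $\binom{t}{\cdot}$-weighted sum of the $f_j(B)$: iterating $f_k(\pyr(Q))=f_k(Q)+f_{k-1}(Q)$ gives $f_k(P)=\sum_{i=0}^{t}\binom{t}{i}f_{k-i}(B)$. Also $f_{d-1}(P)=f_{d-t-1}(B)+1$, so "$P$ has at least $d+3$ facets" translates to "$B$ has at least $(d-t)+2 = d-t+2$ facets" — note this is weaker than $d-t+3$, so a small separate check is needed when $B$ has exactly $d-t+2$ facets (these $B$ are the McMullen polytopes of \cref{lem:dplus2facets}, all of which are non-simple unless they are simplices or products $T(1)\times T(s-1)$; among simple ones with $d-t+2$ facets the relevant object is the prism, i.e.\ the $(\cdot,1)$-triplex, equivalently $J$-type). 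Then I would invoke Step 1 for $B$ — but with $B$ possibly having few vertices — to get $f_j(B)\ge \eta_j(3(d-t)-1,d-t)$, plug into the weighted sum, and show via the Vandermonde-type identity $\sum_i\binom{t}{i}\binom{n}{j-i+1}=\binom{n+t}{j+1}$ that the resulting bound is exactly $\eta_k(3d-2t-1,d)$. The final step is the monotonicity $\eta_k(3d-2t-1,d)\ge \eta_k(2d+\ell,d)$ whenever $3d-2t-1\ge 2d+\ell$, i.e.\ whenever $\ell\le d-2t-1$; when $3d-2t-1 < 2d+\ell$ the constraint $f_0(P)=3d-2t-1\ge 2d+\ell$ fails unless we are in the extended-function regime \eqref{eq:3minus1-function-extension}, and one uses that $\eta_k(\cdot,d)$ is non-decreasing in its first argument and constant past $3d-1$. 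The equality analysis in (ii) then follows: equality propagates down to equality for $B$ in Step 1, forcing $B=J(d-t,d-t)$, $f_{d-1}(P)=d+3$, and the arithmetic condition $f_0(P)=3d-2t-1\ge 2d+\ell$ with $k\ge d-\ell$ coming from the fact that $\eta_k(3d-2t-1,d)=\eta_k(2d+\ell,d)$ only when the "$-\binom{d-\ell}{k+1}$" term vanishes or coincides, i.e.\ $d-\ell\le k+1$ combined with the precise extension.

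\medskip

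\textbf{The main obstacle.} The genuinely delicate part is handling the boundary between "$B$ has $d-t+3$ facets" (where Step 1 applies cleanly) and "$B$ has exactly $d-t+2$ facets," together with keeping the equality characterisation tight. With $d-t+2$ facets $B$ is a $(d-t)$-polytope with $d-t+2$ facets that is simple; dualising, $B^*$ has $d-t+2$ vertices and is simplicial, so $B^*=T_1^{d-t,\cdot}$-type, meaning $B$ is a (multifold pyramid over a) prism — but a simple pyramid over anything of dimension $\ge1$ is not simple, so in fact $B$ is an iterated prism, which is exactly a triplex of the form $(s,0)$... here one must chase definitions to conclude $B$ has too few vertices to reach $2d+\ell$ unless $P$ is of the stated $J$-form, and verify these cases do not produce spurious equality cases or violate "exactly $d+3$ facets". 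The last blanket sentence of the lemma — equality only if $P$ has exactly $d+3$ facets — then falls out uniformly: in both (i) and (ii) equality in \cref{thm:simple-lbt} for $B$ forces $f_{d-t-1}(B)=(d-t)+3$ hence $f_{d-1}(P)=d+3$. I would close by remarking that all the algebraic identities invoked ($\eta$ versus Barnette's formula, the Vandermonde sum, and the monotonicity of $\eta_k$ in its first argument) are elementary and deferred to a computational appendix or to \cref{lem:combinatorial-ineq}.
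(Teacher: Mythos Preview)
Your reduction strategy—strip all $t$ pyramid layers at once and invoke part~(i) on the simple base $B$—is a clean alternative to the paper's proof, which instead inducts on $d$, peeling one pyramid layer per step and separating $t=1$ from $t\ge2$. Both routes ultimately land on the same key quantity $\binom{d+1}{k+1}+2\binom{d}{k+1}-2\binom{t+1}{k+1}$ (this is the paper's Claim~\ref{cl:faces-multifold-pyramid}), and both compare it to $\eta_k(2d+\ell,d)$ via Vandermonde. Your route is more direct, but as written it has gaps you would need to close.

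Two corrections. First, the facet count of a $t$-fold pyramid is $f_{d-1}(P)=f_{d-t-1}(B)+t$, not $+1$; with the correct formula, $P$ having at least $d+3$ facets is exactly $B$ having at least $(d-t)+3$ facets, so your ``main obstacle'' about $B$ with only $(d-t)+2$ facets never arises. Second, the Vandermonde sum $\sum_i\binom{t}{i}\eta_{k-i}(3(d-t)-1,d-t)$ does \emph{not} equal $\eta_k(3d-2t-1,d)$: the two $-\binom{1}{j+1}$ terms convolve to $-2\binom{t+1}{k+1}$, giving $\binom{d+1}{k+1}+2\binom{d}{k+1}-2\binom{t+1}{k+1}$, whereas $\eta_k(3d-2t-1,d)$ has $-\binom{2t+1}{k+1}$. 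These differ in general.

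The genuine gap is the comparison step for $k\le t$ when $f_0(P)>3d-2t-1$. Bounding every $f_j(B)$ by $\eta_j(3(d-t)-1,d-t)$—including $j=0$—yields the lower bound $\binom{d+1}{k+1}+2\binom{d}{k+1}-2\binom{t+1}{k+1}$, and this can be \emph{strictly smaller} than $\eta_k(2d+\ell,d)$. For instance, with $d=10$, $t=3$, $\ell=5$, $k=1$, your bound gives $133$ while $\eta_1(25,10)=135$. So ``monotonicity of $\eta_k$'' does not save you. The fix is to feed the \emph{actual} value $f_0(B)=2d+\ell-t$ (not merely $\ge 3(d-t)-1$) into the pyramid sum; one then checks that the resulting correction $\binom{t}{k}\bigl(2t+\ell+1-d\bigr)$ is always enough to close the gap, which is a short convexity/Vandermonde computation. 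The paper's induction on $d$ sidesteps this entirely because the inductive bound on the $(d-1)$-dimensional base already tracks the actual vertex count at every stage.
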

\begin{proof} 
Part (i) was shown in \cite[Lem.~17]{PinTriYos24}.

(ii) The cases $\ell=1$ and $\ell=2$ were established in \cite[Lem.~18]{PinYos22} and \cite[Lem.~17]{PinTriYos24}, respectively,  for all $d\ge 3$ and all $k\in [1\ldots d-2]$. So assume $\ell\ge 3$.

First consider $d=3$. If $\ell\ge 3$, then $f_{0}(P)\ge 2\times 3+3=9$. If $t=2$ or $3$, then $P$ would be a 3-simplex, which has fewer than 8 vertices, contradicting the assumption. If $t=1$, then $P$ is a pyramid over an $n$-gon with $n\ge 8$, in which case $f_{1}(P)=2n>12=\eta_{1}(8,3)$ and $f_{2}(P)=n+1>d+3$. Thus,  the case $d=3$ is settled, and we proceed by induction on $d$  for all $t\ge 0$ and all $\ell\ge 1$.

Assume now $d\ge 4$. Since $t\ge 1$, $P$ is a pyramid over a $(d-1)$-polytope $F$, which itself is a $(t-1)$-fold pyramid over a simple polytope. Moreover, $F$ has $2(d-1)+\ell+1$ vertices and at least $(d-1)+3$ facets, so the induction hypothesis applies to $F$. 
We also observe that $f_{d-2}(F)\ge d+2=\eta_{d-2}(2(d-1)+\ell+1,d-1)$.

The following result extends the claim in \cite[Lem.~17]{PinTriYos24}; the proof is essentially unchanged. 

\begin{claim} If $Q$ is a $t$-fold pyramid over  $J(d-t,d-t)$, then 
$f_k(Q)=
    \binom{d+1}{k+1}+2\binom{d}{k+1}-2\binom{t+1}{k+1}$.
Furthermore, if  $f_0(Q)=3d-2t-1\ge 2d+\ell$, then $f_{k}(Q)=\eta_{k}(2d+\ell,d)$ for all $k\ge d-\ell$, while $f_{k}(Q)>\eta_{k}(2d+\ell,d)$ for  $1\le k< d-\ell$.
    \label{cl:faces-multifold-pyramid}
\end{claim}

\begin{claimproof} The face numbers of $Q$ follow directly from the multifold-pyramid formula; see, for instance, \cite[Thm.~4.2.2]{Gru03}. Hence, if  $k>t$, then $f_{k}(Q)=\eta_{k}(2d+\ell,d)$ for each $k\ge d-\ell$; moreover,   $f_{k}(Q)>\eta_{k}(2d+\ell,d)$ for each $ k+1\le d-\ell$. Suppose $1\le k\le t$. Since $2t+1\le d-\ell$, we must have $1\le k< d-\ell-1$, in which case
 \begin{equation*}
     f_k(Q)-\eta_k(2d+\ell,d)=\binom{d-\ell}{k+1}-2\binom{t+1}{k+1}\ge \binom{2t+1}{k+1}-2\binom{t+1}{k+1}>0.
 \end{equation*}
Applying the Vandermonde's identity to $\binom{2t+1}{k+1}$ (\cref{lem:combinatorial-identities}(v)) gives the last inequality.
\end{claimproof}

\textbf{Suppose $t\ge 2$.} By the induction hypothesis (for $\ell\in [1\ldots d-2]$), we get 
\begin{equation*}
\label{eq:2d+s-dplus3-simple-pyramid-3}
f_{k}(F)\ge\eta_{k}(2(d-1)+\ell+1,d-1)=\binom{d}{k+1}+2\binom{d-1}{k+1}-\binom{d-\ell-2}{k+1},\; \text{$k\in [1\ldots d-3]$},
\end{equation*}
where equality  holds for some  $k\in [1\ldots d-3]$ if and only if $k\ge (d-1)-(\ell+1)=d-2-\ell$,  $F$ is a $(t-1)$-fold pyramid over $J(d-t, d-t)$, and $f_0(F)=3(d-1)-2(t-1)-1\ge 2(d-1)+\ell+1$ (this condition implies that $\ell \le d-5$). 

Since $f_{0}(F)=f_{0}(P)-1$, 
\begin{equation*}
  \begin{aligned}
    f_{1}(P)&=f_{1}(F)+f_{0}(F)\\
      &\ge \eta_{1}(2(d-1)+\ell+1,d-1)+2(d-1)+\ell+1 =\eta_{1}(2d+\ell+1,d)-1 \; \ge \eta_{1}(2d+\ell,d).
  \end{aligned}
\end{equation*}
We have equality in $\eta_{1}(2d+\ell+1,d)-1\ge \eta_{1}(2d+\ell,d)$  if and only if $\ell=d-2$. However, by the induction hypothesis, if we have equality in $f_{1}(F)\ge \eta_{1}(2(d-1)+\ell+1,d-1)$, then $\ell\le d-5$. Consequently, $f_1(P)> \eta_{1}(2d+\ell,d)$.  

Additionally, for $k\in [2\ldots d-2]$, we have
\begin{align*}
f_{k}(P)=f_{k}(F)+f_{k-1}(F)&\ge \eta_{k}(2(d-1)+\ell+1,d-1)+\eta_{k-1}(2(d-1)+\ell+1,d-1)\\
            &=\eta_{k}(2d+\ell+1,d)\ge \eta_{k}(2d+\ell,d).
\end{align*}
For $2\le k\le d-2-\ell$, $f_{k-1}(F)>\eta_{k-1}(2(d-1)+\ell+1,d-1)$ by the induction hypothesis, yielding $f_k(P)>\eta_{k}(2d+\ell,d)$. For $k= d-1-\ell$, if $f_{k-1}(F)=\eta_{k-1}(2(d-1)+\ell+1,d-1)$ and $f_{k}(F)=\eta_{k}(2(d-1)+\ell+1,d-1)$, then $f_k(P)=\eta_{k}(2d+\ell+1,d)$ and $\ell\le d-5$. However, in this scenario,  $f_k(P)=\eta_{k}(2d+\ell+1,d) > \eta_{k}(2d+\ell,d)$. If $k\ge d-\ell$, then, also by the induction hypothesis, $f_k(F)=\eta_k(2(d-1)+\ell+1,d-1)$ and $f_{k-1}(F)=\eta_{k-1}(2(d-1)+\ell+1,d-1)$  if and only if $F$ is a $(t-1)$-fold pyramid over $J(d-t, d-t)$ and $f_0(F)\ge 2(d-1)+\ell+1$. Furthermore, if $k\ge d-\ell$, then $\eta_{k}(2d+\ell+1,d)=\eta_{k}(2d+\ell,d)$. Consequently,  $f_{k}(P)=\eta_{k}(2d+\ell,d)$ only when $k\ge d-\ell$ and $P$ is a $t$-fold pyramid over $J(d-t,d-t)$.


Suppose that $\ell\ge d-1$, we show that $f_k(P) > \eta_k(2d+\ell, d)$ for all $k\in [1 \ldots d-2]$.  Here $f_{0}(P)\ge 3d-1$ and $f_{0}(F)\ge 3d-2>3(d-1)-1$. By the induction hypothesis, $f_{k}(F)\ge \eta_{k}(2(d-1)+d-2,d-1)$ for $k\in [1\ldots d-3]$, where equality  holds for some  $k\in [1\ldots d-3]$ if and only if $k\ge (d-1)-(\ell+1)=d-2-\ell$,  $F$ is a $(t-1)$-fold pyramid over $J(d-t, d-t)$, and $f_0(F)=3(d-1)-2(t-1)-1\ge 2(d-1)+\ell+1$ (again this condition implies that $\ell \le d-5$). Hence, for $\ell\ge d-1$, 
\begin{equation*}
    f_{k}(F)> \eta_{k}(2(d-1)+d-2,d-1),\; \text{for $k\in [1\ldots d-3]$}.
\end{equation*}
Since $f_0(F)=f_0(P)-1$, 
\begin{align*}
f_{1}(P)&=f_{1}(F)+f_{0}(F)>\eta_{1}(3(d-1)-1,d-1)+3d-2=\eta_{1}(3d-1,d).
\end{align*}
Additionally, for $k\in [2\ldots d-2]$,  \begin{align*}
f_{k}(P)&=f_{k}(F)+f_{k-1}(F)\\
&> \eta_{k}(3(d-1)-1,d-1)+\eta_{k-1}(3(d-1)-1,d-1)=\eta_{k}(3d-2,d)=\eta_{k}(3d-1,d).
\end{align*}


\textbf{Finally, assume that $t=1$}. Then $F$ is a simple $(d-1)$-polytope, in which case Part (i) implies  $f_{0}(F)\ge 3(d-1)-1$, so $f_{0}(P)\ge 3d-3$.  For $f_{0}(P)\ge 3d-2$, it follows that $f_{0}(F)\ge 3d-3$. By Part (i),  $f_{k}(F)> \eta_{k}(3(d-1)-1,d-1)$ for $k\in [2\ldots d-2]$. Thus 
\begin{align*}
f_{k}(P)&=f_{k}(F)+f_{k-1}(F)\\
&> \eta_{k}(3(d-1)-1,d-1)+\eta_{k-1}(3(d-1)-1,d-1) = \eta_{k}(3d-2,d)= \eta_{k}(3d-1,d).
\end{align*}
For $f_{0}(P)\ge 3d-2$ and $k=1$, as  $F$ is a simple $(d-1)$-polytope, 
\begin{align*}
   f_1(P) & \ge \frac{(3d-3)(d-1)}{2}+3d-3  = \eta_1(3d-1,d)+\frac{d-1}{2}-1  > \eta_1(3d-1,d).
\end{align*}

Suppose that $f_0(P)=3d-3$ (that is, $\ell=d-3$). By the induction hypothesis, 
\begin{equation*}
f_{k}(F)\ge\eta_{k}(3(d-1)-1,d-1),\; \text{$k\in [1\ldots d-3]$},
\end{equation*}
where equality  holds for some  $k\in [1\ldots d-3]$ if and only if $k\ge d-2-\ell=1$ and $F$ is  $J(d-1, d-1)$. We  start with the cases $k=1,2$, \begin{align*}
f_{1}(P)&=f_{1}(F)+f_{0}(F)\ge \eta_{1}(3(d-1)-1,d-1)+3(d-1)-1=\eta_{1}(3d-3,d)+1,
\end{align*}
and
\begin{align*}
f_{2}(P)&=f_{2}(F)+f_{1}(F)\ge \eta_{2}(3(d-1)-1,d-1)+\eta_{1}(3(d-1)-1,d-1)=\eta_{2}(3d-3,d)+1.
\end{align*}
For $k\in [3\ldots d-2]$, 
\begin{align*}
f_{k}(P)&=f_{k}(F)+f_{k-1}(F)\\
&\ge \eta_{k}(3(d-1)-1,d-1)+\eta_{k-1}(3(d-1)-1,d-1)= \eta_{k}(3d-3,d)
\end{align*}	
and equality  in $f_{k}(P)\ge  \eta_{k}(3d-3,d)$ holds for some  $k\in [1\ldots d-2]$ if and only if $k\ge 3=d-\ell$ and $P$ is a pyramid over $J(d-1, d-1)$ satisfying $f_0(P)\ge 3d-3$, as required. This completes the proof.

\end{proof}


	
\section{Existing lower bounds  for polytopes with few vertices and auxiliary results}
\label{sec:existing-theorems}
  We use the following result by Xue~\cite[Prop.~3.1]{Xue21}; see also \cite[Prop.~12]{PinYos22}.

\begin{proposition}[Xue 2021]\label[proposition]{prop:number-faces-outside-facet}
Let $d\ge 2$ and let $P$ be a $d$-polytope. Suppose  $r\le d+1$ and let $S:=(v_1,v_2,\ldots,v_r)$ be a sequence of distinct vertices in $P$. Then 
\begin{enumerate}[{\rm (i)}]
    \item  There is a sequence $F_1, F_2,\ldots, F_r$ of faces of $P$ such that each $F_i$ has dimension $d-i+1$ and contains $v_i$, but does not contain any $v_j$ with $j<i$.
    \item For each $k\ge1$, the number of $k$-faces of $P$ that contain at least one of the vertices in $S$ is bounded from below by \[\sum_{i=1}^{r}f_{k-1}(F_i/v_i).\]
\end{enumerate}
\end{proposition}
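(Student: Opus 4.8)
The plan is to prove (i) by descending through a chain of facets, and then to derive (ii) from (i) by a charging argument in which every relevant face is assigned to the least index of a vertex of $S$ it contains. The technical heart of (i) is the following auxiliary statement, which I would prove by induction on $|W|$: \emph{if $v$ is a vertex of a polytope $Q$ of dimension $e$ and $W$ is a set of vertices of $Q$ with $v\notin W$ and $|W|\le e$, then $Q$ has a face $F$ with $v\in F$, $F\cap W=\emptyset$, and $\dim F\ge e-|W|$.} The base case $W=\emptyset$ is trivial, taking $F=Q$. For the inductive step pick $w\in W$; then $|W|\ge1$ and $|W|\le e$ force $e\ge1$, so $Q$ has facets, and since the proper face $\{v\}$ is the intersection of the facets of $Q$ containing it, some facet $G$ of $Q$ satisfies $v\in G$ and $w\notin G$ (otherwise $w\in\{v\}$). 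Then $\dim G=e-1$, $v\notin W\cap G$, and $|W\cap G|\le|W|-1\le e-1=\dim G$, so the inductive hypothesis gives a face $F$ of $G$ with $v\in F$, $F\cap(W\cap G)=\emptyset$, and $\dim F\ge(e-1)-|W\cap G|\ge e-|W|$; moreover $F\subseteq G$ gives $F\cap W\subseteq F\cap(W\cap G)=\emptyset$, and $F$ is a face of $Q$.

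To deduce (i), apply the auxiliary statement with $Q=P$, $v=v_i$, and $W=\{v_1,\dots,v_{i-1}\}$, which is legitimate because $|W|=i-1\le r-1\le d$. This produces a face of $P$ containing $v_i$, avoiding $v_1,\dots,v_{i-1}$, of dimension at least $d-i+1$. If its dimension exceeds $d-i+1$, replace it by a facet of itself that contains $v_i$ — such a facet exists since $v_i$ is a proper face of a polytope of dimension $\ge1$ and hence lies in some facet — and observe that any facet still avoids $v_1,\dots,v_{i-1}$. Iterating drops the dimension by one each time, so after finitely many steps it is exactly $d-i+1$; call the resulting face $F_i$.

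For (ii), fix $k\ge1$, and for each $k$-face $G$ of $P$ containing at least one vertex of $S$ let $\sigma(G)$ be the smallest index $i$ with $v_i\in G$. The classes $\mathcal{G}_i=\{G:\sigma(G)=i\}$ for $1\le i\le r$ are pairwise disjoint, and their union is exactly the set of $k$-faces of $P$ meeting $S$. Now every $k$-face of $F_i$ that contains $v_i$ is a $k$-face of $P$ (a face of a face is a face), it contains $v_i$, and it contains none of $v_1,\dots,v_{i-1}$ because $F_i$ contains none of them; hence it lies in $\mathcal{G}_i$. This inclusion of face sets is injective, so $|\mathcal{G}_i|\ge f_{k-1}(F_i/v_i)$, using the bijection between the $k$-faces of $F_i$ through $v_i$ and the $(k-1)$-faces of the vertex figure $F_i/v_i$. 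Summing over $i=1,\dots,r$ yields the stated bound.

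I do not expect a deep obstacle, but two points require care. First, in the descent for (i) one must keep track of the inequality $|W|\le e$ along the recursion, so that the face produced always has nonnegative dimension and still contains $v_i$; this is precisely where the hypothesis $r\le d+1$ enters. Second, in the charging argument for (ii) one must check that the only property of $F_i$ being used is that it avoids $v_1,\dots,v_{i-1}$, so that the classes $\mathcal{G}_i$ are genuinely disjoint and no $k$-face of $P$ is counted against two different indices — without this, the individual lower bounds on $|\mathcal{G}_i|$ could not simply be added.
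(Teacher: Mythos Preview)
Your proof is correct. Note, however, that the paper does not supply its own proof of this proposition: it is quoted as \cite[Prop.~3.1]{Xue21} (see also \cite[Prop.~12]{PinYos22}) and used as a black box throughout. Your argument is essentially the standard one from those references---an inductive descent through facets to produce the chain $F_1,\dots,F_r$, followed by the ``minimum-index'' charging to establish the lower bound---so there is nothing to contrast.
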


	Xue's lower bound theorem for $d$-polytopes with at most $2d$ vertices reads as follows:
  
\begin{theorem}[$d$-polytopes with at most $2d$ vertices,  {\cite{Xue21}}]\label{thm:at-most-2d}  
Let $d\ge 2$ and $1\le s\le d$. If $P$ is a  $d$-polytope with  $d+s$ vertices, then
\[f_k(P)\ge \ii_k(d+s,d),\;  \text{for all $k\in[1\ldots d-1]$}.\] 
Moreover, if $f_k(P)=\ii_k(d+s,d)$ for some $k\in[1\ldots d-2]$, then $P$ is the $(s,d-s)$-triplex.
\end{theorem}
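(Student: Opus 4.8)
The plan is a double induction: on the dimension $d$ and, for fixed $d$, on the number $s$ of vertices above $d+1$. The base cases are $d\le 3$ (where for $d=2$ both assertions are immediate and the equality clause is vacuous, and for $d=3$, $k=1$, a direct inspection of the finitely many $3$-polytopes with at most $6$ vertices shows that only the triangular prism, the square pyramid and the simplex attain the bound) and $s=1$ (where $P=T(d)$, $f_k(P)=\binom{d+1}{k+1}=\ii_k(d+1,d)$, and $T(d)$ is the $(1,d-1)$-triplex). Two binomial identities drive the induction: $\ii_k(d+s,d)=\ii_k((d-1)+s,d-1)+\ii_{k-1}((d-1)+s,d-1)$, and $\ii_k((d-1)+s',d-1)+\sum_{i=1}^{r}\binom{d-i+1}{k}\ge\ii_k(d+s,d)$ whenever $s'=s-r+1$, both obtained by a short calculation using $\binom{a}{k+1}-\binom{b}{k+1}=\sum_{j=b}^{a-1}\binom{j}{k}$; in the second, the slack equals $\sum_{i=1}^{r}\binom{d-i+1}{k}+\sum_{i=1}^{r}\binom{d-s+1-i}{k}-\binom{d}{k}-\binom{d-1}{k}$, which is nonnegative and strictly positive once $r\ge 3$.

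If $P$ is a pyramid over a $(d-1)$-polytope $G$ (which then has $(d-1)+s$ vertices), I use $f_k(P)=f_k(G)+f_{k-1}(G)$ together with the first identity and the inductive hypothesis applied to $G$. Equality for some $k\in[1\ldots d-2]$ forces equality for $G$ at both indices $k$ and $k-1$, one of which lies in $[1\ldots(d-1)-2]$, so the inductive equality clause yields $G=(s,d-1-s)$-triplex and hence $P=(s,d-s)$-triplex. This step requires $s\le d-1$ so that $G$ stays in range; the boundary case $s=d$, where $G$ has $2(d-1)+1$ vertices, must be treated separately.

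If $P$ is not a pyramid, let $F$ be a facet with the most vertices, say $f_0(F)=d+s-r$ with $r\ge 1$; the vertices outside $F$ form a set $S$ with $|S|=r\le s\le d$. Since a facet containing all but one vertex of $P$ would make $P$ a pyramid, we have $r\ge 2$. Every $k$-face not contained in $F$ meets $S$, so $f_k(P)=f_k(F)+N_k$ with $N_k\ge\sum_{i=1}^{r}f_{k-1}(F_i/v_i)$ by \cref{prop:number-faces-outside-facet}, the faces $F_i$ having dimension $d-i+1$ and containing $v_i$. If $F$ is a simplex then $P$ is simplicial, and I would instead apply the dual form of \cref{thm:simple-lbt} together with the comparison $\phi_k(d+s,d)\ge\ii_k(d+s,d)$ (strict for $s\ge 2$, consistent with the triplex not being simplicial for $d\ge 4$). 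Otherwise $F$ has at least $d+1$ vertices, $s':=s-r+1\in[2\ldots d-1]$ is in range, and the inductive hypothesis gives $f_k(F)\ge\ii_k((d-1)+s',d-1)$; bounding each $f_{k-1}(F_i/v_i)\ge\binom{d-i+1}{k}$ (the simplex bound for the $(d-i)$-polytope $F_i/v_i$) and invoking the second identity gives $f_k(P)\ge\ii_k(d+s,d)$.

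For the equality clause in the non-pyramidal case I trace equality backwards. The slack above is positive when $r\ge 3$, so $r=2$; then $F$ has $d+s-2$ vertices, equality in $f_k(F)\ge\ii_k((d-1)+(s-1),d-1)$ forces $F$ to be a smaller triplex, equality in the simplex bounds forces $F_2/v_2$ and the relevant vertex figures to be simplices and \cref{prop:number-faces-outside-facet} to be tight, and the residual terms $\binom{d-s}{k}+\binom{d-s-1}{k}$ must vanish. From this rigid data one reconstructs the combinatorial type of $P$: it must be $T(1)\times T(d-1)$, which in particular forces $s=d$ (for $s<d$ the constraints are incompatible, matching the fact that the $(s,d-s)$-triplex is then a pyramid). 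I expect this reconstruction — extracting the full face lattice of $P$ from equality in a tower of estimates while assuming equality for only a single $k$ — to be the main difficulty, together with the index $k=d-2$, where the inductive equality clause does not apply to the $(d-1)$-dimensional facet and a separate argument (for instance via the ridge–facet incidence count) is needed; the $s=d$ boundary of the pyramid reduction and the binomial comparison in the simplicial case are comparatively routine.
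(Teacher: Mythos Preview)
The paper does not give its own proof of this theorem: it is quoted verbatim from Xue~\cite{Xue21} and used as a black box throughout. There is therefore nothing in the paper to compare your proposal against. What the paper does record of Xue's argument is the counting tool \cref{prop:number-faces-outside-facet} and the binomial inequality \cref{lem:combinatorial-identities}(i), and your outline is built on precisely these two ingredients, so at the level of strategy you are following Xue's approach.

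Two remarks on the outline itself. First, your expression for the slack in the second identity is not correct: for $r=2$ your formula gives $\binom{d-s}{k}+\binom{d-s-1}{k}$, whereas the slack must vanish there (this is the $r=2$ equality case of \cref{lem:combinatorial-identities}(i)). The correct slack is $\sum_{m=1}^{r-2}\bigl[\binom{d-1-m}{k}-\binom{d-s+r-1-m}{k}\bigr]$, which is indeed nonnegative for $r\le s$ and strictly positive for $3\le r\le s-1$; note that it also vanishes at $r=s$, but you have already branched off that case as simplicial. Second, the equality reconstruction in the non-pyramidal $r=2$ branch is where the real work lies, and your sketch understates it. For $s<d$ the unique minimiser is a pyramid, so from a non-pyramidal $P$ with $r=2$ you must derive strict inequality, not merely pin down $P$ up to isomorphism; conversely for $s=d$ you must actually rebuild the prism $T(1)\times T(d-1)$ from the equalities in \cref{prop:number-faces-outside-facet}. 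In Xue's paper this reconstruction takes several pages of careful face-lattice analysis and is not a routine check, so ``from this rigid data one reconstructs the combinatorial type'' is hiding the main difficulty rather than resolving it.
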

Recall that the $(s,d-s)$-triplex is the $(d-s)$-fold pyramid over $T(1)\times T(s-1)$.

We proceed with a lower bound theorem for $d$-polytopes with $2d+1$ vertices, independently proved  by Pineda-Villavicencio and Yost~\cite{PinYos22} and by Xue~\cite{Xue24}, as well as a lower bound theorem by Pineda-Villavicencio et al.~\cite{PinTriYos24} for $d$-polytopes with $2d+2$ vertices.

Recall that $J(2,d)$ and $A(d)$ are, respectively, the polytopes obtained by truncating  a simple vertex and a nonsimple vertex in a $(2,d-2)$-triplex. The polytope $\Sigma(d)$ is the convex hull of \[\set*{0,e_1,e_1+e_k,e_2,e_2+e_k,e_1+e_2,e_1+e_2+2e_k: 3\le k\le d},\] and $(T_{2}^{d,d-(\floor{d/2}+2)})^*$ is the $(d-(\floor{d/2}+2))$-fold pyramid over $T(2)\times T(\floor{d/2})$.

\begin{theorem}[Pineda-Villavicencio and Yost 2022; Xue 2024]
\label{thm:2dplus1}
Let $d\ge 3$, let $P$ be a $d$-polytope with \textbf{at least} $2d+1$ vertices, and let $k\in [1\ldots d-2]$. 
\begin{enumerate}[{\rm (i)}]
\item For $d=3$, if $P$ is $\Sigma(3)$ or $J(2,3)$, then $f(P)=(7,11,6)$. Otherwise, $f_1(P)>11$ and $f_2(P)>6$.
\item  If $d\ge 4$ and $P$ has at least $d+3$ facets, then $f_{k}(P)\ge f_{k}(J(2,d))$, with equality for some $k\in [1\ldots d-2]$ only if  $P=J(2,d)$.
\item  If $d\ge 4$ and $P$ has  $d+2$ facets, then $f_{k}(P)\ge f_{k}((T_{2}^{d,d-(\floor{d/2}+2)})^{*})$. Moreover, if $d$ is even  then $(T_{2}^{d,d-(\floor{d/2}+2)})^{*}$ has exactly $2d+1$ vertices.
\end{enumerate}
\end{theorem}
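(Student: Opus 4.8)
The plan is to argue by induction on $d$, splitting into the case of exactly $d+2$ facets and the case of at least $d+3$ facets, and to treat $d=3$ separately. Throughout I use that $f_k(J(2,d))=\eta_k(2d+1,d)=\binom{d+1}{k+1}+2\binom{d}{k+1}-\binom{d-1}{k+1}$ for $k\in[1\ldots d-2]$, that $\eta_k(2d+\ell,d)$ is nondecreasing in $\ell$, and that $\eta_k(3d-1,d)-\eta_k(2d+1,d)=\binom{d-1}{k+1}>0$ for $k\le d-2$; hence any $P$ that is forced to have at least $3d-1$ vertices beats the target bound strictly. For $d=3$ (part~(i)): Euler's relation gives $f_2(P)=f_1(P)-f_0(P)+2$, and $2f_1(P)=\sum_v\deg(v)\ge 3f_0(P)\ge 21$ gives $f_1(P)\ge 11$, whence $f_2(P)\ge 6$; equality forces $f_0(P)=7$ with exactly one vertex of degree $4$ and all others of degree $3$, and a finite check pins down the combinatorial types $\Sigma(3)$ and $J(2,3)$.

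For $d\ge 4$ with exactly $d+2$ facets (part~(iii)): by \cref{lem:dplus2facets}, $P=(T^{d,d-a}_m)^{*}$ with $2\le a\le d$, $1\le m\le\floor{a/2}$, and $f_0(P)=d+1+m(a-m)\ge 2d+1$, i.e.\ $m(a-m)\ge d-1$. Passing to the dual and iterating the monotonicity inequalities of \cref{lem:dplus2-vertices-inequalities} drives $(a,m)$ down to the extremal choice compatible with this constraint, namely $m=2$ and $a=\floor{d/2}+2$; this is exactly \cref{lem:lower-bound-dplus2-facets} specialised to $\ell=1$, so $f_k(P)\ge\tau_k(2d+1,d)=f_k((T_{2}^{d,d-(\floor{d/2}+2)})^{*})$. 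The ``moreover'' clause is then the computation $f_0((T_{2}^{d,d-(\floor{d/2}+2)})^{*})=d+1+2\cdot\tfrac d2=2d+1$ for even $d$, again via \cref{lem:dplus2facets}.

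The substance is $d\ge 4$ with at least $d+3$ facets (part~(ii)). If $P$ is simplicial, the lower bound theorem for simplicial polytopes (dually, \cref{thm:simple-lbt}) forces $f_k(P)$ far above $f_k(J(2,d))$. If $P$ is a $t$-fold pyramid over a simple polytope, then \cref{lem:2d+s-dplus3-simple-pyramid} applies: for $t=0$ it gives $f_0(P)\ge 3d-1$ and hence $f_k(P)\ge\eta_k(3d-1,d)>f_k(J(2,d))$, while for $t\ge 1$ part~(ii) of that lemma (with $\ell=1$, where equality is impossible for $k\le d-2$) gives $f_k(P)>\eta_k(2d+1,d)=f_k(J(2,d))$. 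In the remaining case $P$ has a non-simplex facet; choose a facet $F$ with the largest number of vertices, so that $V(P)\setminus V(F)$ is small, and write $f_k(P)=f_k(F)+g_k$, where $g_k$ counts the $k$-faces of $P$ not contained in $F$. Each such $k$-face contains a vertex of $V(P)\setminus V(F)$, so \cref{prop:number-faces-outside-facet} gives $g_k\ge\sum_i f_{k-1}(F_i/v_i)$ for a suitable sequence $F_1,F_2,\ldots$ of faces; meanwhile $F$ is a $(d-1)$-polytope to which \cref{thm:at-most-2d}, the inductive hypothesis, or the previously established bound for $(d-1)$-polytopes with $2(d-1)+2$ vertices (when $P$ is a pyramid with $f_0(F)=2d$, cf.\ \cite{PinTriYos24}) applies. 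Summing the two estimates and collapsing the resulting binomial sums should recover $f_k(P)\ge f_k(J(2,d))$, and tracking when both pieces are simultaneously tight should isolate $P=J(2,d)$ as the only equality case.

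I expect the decisive obstacle to be precisely this last case: guaranteeing a facet $F$ for which $V(P)\setminus V(F)$ is small enough to feed \cref{prop:number-faces-outside-facet} while $F$ remains governed by induction; pushing the binomial-coefficient bookkeeping through so that the two lower bounds sum to \emph{exactly} $\eta_k(2d+1,d)$ rather than to some merely comparable quantity; and---most delicately---ruling out every near-extremal configuration so that equality persists only for $J(2,d)$. In the published treatments of this case (\cite{PinYos22,Xue24}) this occupies several pages and several auxiliary lemmas, and it is exactly where a shelling argument in the spirit of Blind and Blind~\cite{BliBli99} and the flexibility of Xue's counting result (\cref{prop:number-faces-outside-facet}) do the heavy lifting.
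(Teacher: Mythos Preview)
The paper does not prove this theorem: it is stated as a known result attributed to \cite{PinYos22} and \cite{Xue24}, and is used as input to the rest of the paper. So there is no ``paper's own proof'' to compare against.

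Your sketch, however, has a genuine circularity problem. You invoke \cref{lem:2d+s-dplus3-simple-pyramid}(ii) with $\ell=1$ to dispose of the multifold-pyramid-over-simple case, but look at the proof of that lemma in the paper: ``The cases $\ell=1$ and $\ell=2$ were established in \cite[Lem.~18]{PinYos22} and \cite[Lem.~17]{PinTriYos24}.'' The $\ell=1$ input you need is imported from \cite{PinYos22}, which is precisely one of the papers that proves \cref{thm:2dplus1}. Similarly, you reduce part~(iii) to \cref{lem:lower-bound-dplus2-facets} at $\ell=1$, but the paper notes that this lemma's proof ``is embedded in the proof of \cite[Thm.~4.1]{Xue24}''---again the source of the very theorem you are trying to establish. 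In short, the auxiliary lemmas you borrow from the present paper are, for the parameter values you need, themselves consequences of \cref{thm:2dplus1} (or its companion arguments in \cite{PinYos22,Xue24}), so your argument assumes what it sets out to prove.

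Beyond the circularity, your final paragraph correctly identifies that the main content---choosing a facet $F$ so that both the bound on $f_k(F)$ and the Xue-type count on faces meeting $V(P)\setminus V(F)$ sum exactly to $\eta_k(2d+1,d)$, and then isolating $J(2,d)$ as the unique equality case---is where all the work lies, and you do not actually carry it out. What you have written is an accurate outline of the strategy used in \cite{PinYos22,Xue24}, not a proof.
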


As observed in \cite{PinTriYos24}, a variant of \cref{thm:at-most-2d} extends to $d$-polytopes with at least $d+s$ vertices; it  follows as a corollary of \cref{thm:at-most-2d,thm:2dplus1}. 

\begin{corollary}[{\cite[Cor.~11]{PinTriYos24}}] For each $d\ge2$, each $k\in [1\ldots d-1]$, and each $s\in[1\ldots d]$, if $P$ is a $d$-polytope with \textbf{at least} $d+s$ vertices, then $f_{k}(P)\ge \theta_{k}(d+s,d)$.

Moreover, if $f_k(P)=\ii_k(d+s,d)$ for some $k\in[1\ldots d-2]$, then $P$ has $d+2$ facets.
\label{cor:more-dpluss}
\end{corollary}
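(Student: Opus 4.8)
The plan is to deduce the statement from \cref{thm:at-most-2d} and \cref{thm:2dplus1} by an elementary monotonicity argument. Write $n:=f_0(P)\ge d+s$ and set $s^{*}:=\min(n-d,d)$, so $s\le s^{*}\le d$. Since
\[
\theta_k(d+j,d)=\binom{d+1}{k+1}+\binom{d}{k+1}-\binom{d+1-j}{k+1}
\]
and $\binom{d+1-j}{k+1}$ is non-increasing in $j$ for $k\ge 1$, the function $j\mapsto\theta_k(d+j,d)$ is non-decreasing on $[1\ldots d]$, so $\theta_k(d+s^{*},d)\ge\theta_k(d+s,d)$. Hence it suffices to prove $f_k(P)\ge\theta_k(d+s^{*},d)$; and if $f_k(P)=\theta_k(d+s,d)$ for some $k\in[1\ldots d-2]$, then in fact $f_k(P)=\theta_k(d+s^{*},d)$, which is the version we must analyse for the equality claim.

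\emph{Case $n\le 2d$.} Here $n=d+s^{*}$ with $s^{*}\le d$, so \cref{thm:at-most-2d} applies verbatim and gives $f_k(P)\ge\theta_k(d+s^{*},d)$. If equality holds for some $k\in[1\ldots d-2]$, the characterisation in \cref{thm:at-most-2d} identifies $P$ as the $(s^{*},d-s^{*})$-triplex, i.e.\ the $(d-s^{*})$-fold pyramid over the simplicial $s^{*}$-prism $T(1)\times T(s^{*}-1)$, which has $(s^{*}+2)+(d-s^{*})=d+2$ facets, as required.

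\emph{Case $n\ge 2d+1$.} Now $s^{*}=d$, so $\theta_k(d+s^{*},d)=\theta_k(2d,d)=\binom{d+1}{k+1}+\binom{d}{k+1}$ (using $\binom{1}{k+1}=0$ for $k\ge 1$). For $k=d-1$ the required inequality reads $f_{d-1}(P)\ge d+2$, which holds because a $d$-polytope with more than $d+1$ vertices is not a simplex and hence has at least $d+2$ facets. For $k\in[1\ldots d-2]$ I would prove the \emph{strict} inequality $f_k(P)>\binom{d+1}{k+1}+\binom{d}{k+1}$, which both establishes the bound and rules out equality in this regime. For $d=2$ this is trivial and for $d=3$ it is \cref{thm:2dplus1}(i). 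For $d\ge 4$, split on $f_{d-1}(P)$: if $f_{d-1}(P)\ge d+3$, then \cref{thm:2dplus1}(ii) gives $f_k(P)\ge\eta_k(2d+1,d)$ and
\[
\eta_k(2d+1,d)-\binom{d+1}{k+1}-\binom{d}{k+1}=\binom{d}{k+1}-\binom{d-1}{k+1}=\binom{d-1}{k}>0;
\]
if $f_{d-1}(P)=d+2$, then \cref{thm:2dplus1}(iii) (equivalently \cref{lem:lower-bound-dplus2-facets} with $\ell=1$, $a=\floor{d/2}+2$) gives
\[
f_k(P)\ge\tau_k(2d+1,d)=\binom{d+1}{k+1}+\binom{d}{k+1}+\binom{d-1}{k+1}-\binom{\ceil{d/2}}{k+1}-\binom{\ceil{d/2}-1}{k+1},
\]
and a routine binomial estimate gives $\binom{d-1}{k+1}>\binom{\ceil{d/2}}{k+1}+\binom{\ceil{d/2}-1}{k+1}$ for $1\le k\le d-2$.

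Combining the two cases proves $f_k(P)\ge\theta_k(d+s,d)$ for all $k\in[1\ldots d-1]$, and shows that equality for some $k\in[1\ldots d-2]$ can occur only in the regime $n\le 2d$, where \cref{thm:at-most-2d} forces $P$ to be a triplex, and hence to have $d+2$ facets. I do not expect a genuine obstacle: this is a corollary in the literal sense. The two points needing care are (a) tracking the equality case through the monotonicity reduction so that it is the \emph{characterisation} part of \cref{thm:at-most-2d}, not merely its inequality, that applies, and (b) the displayed binomial comparison in the $d+2$-facet branch, where the two subtracted terms have indices close to $d/2$ while the surviving term has index $d-1$; this is the only computation that is not immediate.
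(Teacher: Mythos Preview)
Your proposal is correct and follows exactly the route the paper indicates: the corollary is stated without proof and attributed to \cite[Cor.~11]{PinTriYos24}, with the remark that it ``follows as a corollary of \cref{thm:at-most-2d,thm:2dplus1}'', which is precisely the pair of results you invoke via the monotonicity reduction. The only point to tidy is the binomial comparison $\binom{d-1}{k+1}>\binom{\lceil d/2\rceil}{k+1}+\binom{\lceil d/2\rceil-1}{k+1}$; it is indeed routine (for instance via \cref{lem:combinatorial-identities}(vi) applied to $a=d-1$, $b=0$ or $b=1$, together with a check that equality is excluded for $d\ge 4$ and $1\le k\le d-2$), and you already flag it as the one step needing a line of justification.
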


We now state a corollary of \cref{prop:number-faces-outside-facet} and \cref{cor:more-dpluss}.

\begin{corollary}\label{cor:number-faces-outside-facet-practical}
Let $d\ge 2$ and let $P$ be a $d$-polytope. Suppose   $r\le d+1$  and  let $S:=(v_1,v_2,\ldots,v_r)$ be a sequence of distinct vertices in $P$. Then
\begin{enumerate}[{\rm (i)}]
    \item  There is a sequence $F_1, F_2,\ldots, F_r$ of faces of $P$ such that each $F_i$ has dimension $d-i+1$ and contains $v_i$, but does not contain any $v_j$ with $j<i$.
    \item For each $1\le i\le r$, let $1\le s_{i} \le d-i+1$, and suppose that $\deg_{F_{i}}(v_i)\ge d-i+1+s_{i}$. 
   Then, for each $k\ge1$, the number of $k$-faces of $P$ containing at least one vertex in $S$ is bounded below by \[\sum_{i=1}^{r}f_{k-1}(F_i/v_i)\ge \sum_{i=1}^{r}\theta_{k-1}(d-i+1+s_{i},d-i).\]
\end{enumerate}
\end{corollary}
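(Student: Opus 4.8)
\textbf{Proof plan for \cref{cor:number-faces-outside-facet-practical}.}

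The plan is to derive this directly from \cref{prop:number-faces-outside-facet} and \cref{cor:more-dpluss}. Part (i) is nothing new: it is verbatim \cref{prop:number-faces-outside-facet}(i), and we carry the same sequence $F_1, F_2, \ldots, F_r$ forward. The only substantive content is the chain of inequalities in part (ii), where the first inequality is \cref{prop:number-faces-outside-facet}(ii) and the second must be supplied by applying \cref{cor:more-dpluss} to each vertex figure $F_i/v_i$.

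The key step is to identify, for each $i$, the right ambient dimension and vertex count to feed into \cref{cor:more-dpluss}. By construction $F_i$ is a face of $P$ of dimension $d-i+1$, so its vertex figure $F_i/v_i$ is a polytope of dimension $(d-i+1)-1 = d-i$. The number of vertices of $F_i/v_i$ equals the number of edges of $F_i$ through $v_i$, i.e.\ $\deg_{F_i}(v_i)$, which by hypothesis is at least $d-i+1+s_i$. Writing this ambient dimension as $d' := d-i$ and the vertex bound as $d'+s_i = (d-i)+s_i$, the hypothesis $1 \le s_i \le d-i+1 = d'+1$ is exactly the range required by \cref{cor:more-dpluss} (which needs $s_i \in [1 \ldots d']$; the boundary value $s_i = d'+1$ corresponds to $F_i/v_i$ being a $(d-i)$-polytope with at least $2(d-i)+1$ vertices, still covered since \cref{cor:more-dpluss} asks for \emph{at least} $d'+s_i$ vertices and one may simply take $s_i = d'$ in that case, or note the statement is monotone). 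Hence for each $k \ge 1$, applying \cref{cor:more-dpluss} with $k$ replaced by $k-1$ gives $f_{k-1}(F_i/v_i) \ge \theta_{k-1}(d-i+1+s_i,\, d-i)$, and summing over $i$ from $1$ to $r$ yields the claimed bound.

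The main obstacle, such as it is, is purely bookkeeping: making sure the index shifts line up—$F_i$ has dimension $d-i+1$, its vertex figure has dimension $d-i$, and we invoke the $(k-1)$-dimensional face count inside a $(d-i)$-polytope—and checking that the degree hypothesis $\deg_{F_i}(v_i) \ge d-i+1+s_i$ translates into the vertex-count hypothesis of \cref{cor:more-dpluss} for $F_i/v_i$ with the parameter $s_i$ in the admissible range. There is also a minor point to address at $i=1$: $F_1 = P$ itself (dimension $d$), so the requirement $r \le d+1$ guarantees $d-i \ge 0$ for all $i \le r$, and in the extreme case $i = d+1$ the polytope $F_i/v_i$ is $0$-dimensional and the bound is vacuous, consistent with $\theta_{k-1}$ being defined to vanish there. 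No deeper idea is needed; the corollary is essentially a repackaging of the two cited results into a form convenient for the later applications in \cref{cl:3minus1-minus-2,cl:3minus1-more-2dplus1,cl:3minus1-less-2dplus1}.
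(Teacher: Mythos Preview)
Your proposal is correct and mirrors the paper's treatment: the paper states this result without proof, simply presenting it as an immediate corollary of \cref{prop:number-faces-outside-facet} and \cref{cor:more-dpluss}, which is exactly the derivation you give. One small bookkeeping slip: the vertex count of $F_i/v_i$ is at least $d-i+1+s_i = d' + (s_i+1)$, not $d'+s_i$, so the parameter fed into \cref{cor:more-dpluss} is $s_i+1$ rather than $s_i$; this does not affect the conclusion (and your monotonicity remark for the boundary values covers the residual range mismatch).
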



\begin{theorem}[Pineda-Villavicencio et al. 2024] Let $d\ge 3$, let $P$ be a $d$-polytope with $2d+2$ vertices, and let $k\in [1\ldots d-2]$. 
\begin{enumerate}[{\rm (i)}]
\item If $P$ has at least $d+3$ facets, then $f_{k}(P)\ge f_{k}(J(3,d))=f_{k}(A(d))=\eta_{k}(2d+2,d)$.

\item If $P$ has $d+2$ facets, then  $f_{k}(P)\ge \hh_{k}(2d+2,d)$. Furthermore, equality is tight for every $k\in [1\ldots d-2]$ when $d$ is odd, $a=\floor{(d+3)/2}+1$, and $P$ is a $(d-a)$-fold pyramid over $T(2)\times T(a-2)$. 
\end{enumerate}
\label{thm:2dplus2}
\end{theorem}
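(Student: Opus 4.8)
The plan is to read off (ii) from the structure of $d$-polytopes with $d+2$ facets, and to prove (i) by induction on $d$ via a facet-decomposition argument built on Xue's counting results (\cref{prop:number-faces-outside-facet,cor:number-faces-outside-facet-practical}).

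\emph{Part (ii).} By \cref{lem:dplus2facets}, $P=(T_{m}^{d,d-a})^{*}$ for some $2\le a\le d$ and $1\le m\le\floor{a/2}$ with $f_{0}(P)=d+1+m(a-m)$; since $f_{0}(P)=2d+2$ this forces $m(a-m)=d+1$, and in particular $m\ge 2$. For $d\ge 5$ the index $\ell=2$ lies in $[1\ldots d-3]$, so \cref{lem:lower-bound-dplus2-facets} gives $f_{k}(P)\ge\hh_{k}(2d+2,d)$ at once, while $d=3,4$ involve only finitely many polytopes and are checked by hand. For the final assertion, when $d$ is odd one has $a=\floor{(d+3)/2}+1=(d+5)/2$, so the $(d-a)$-fold pyramid $(T_{2}^{d,d-a})^{*}$ over $T(2)\times T(a-2)$ has exactly $d+1+2(a-2)=2d+2$ vertices, and substituting $a=\floor{(d+3)/2}+1$ and $m=2$ into \eqref{eq:dplus2facets} yields the stated number of its $k$-faces.

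\emph{Part (i).} Induct on $d$, taking the theorem in all smaller dimensions as hypothesis. The cases $d\in\{3,4\}$ are finite and direct: for $d=3$ the bound $2f_{1}(P)\ge 3f_{0}(P)=6d+6$ already gives $f_{1}(P)\ge 12=\eta_{1}(8,3)$, and $d=4$ follows from the classification of $4$-polytopes with few vertices. So fix $d\ge 5$. By \cref{lem:2d+s-dplus3-simple-pyramid}(i) (the case $t=0$) $P$ is not simple, hence has a vertex of degree $\ge d+1$. If $P$ is an (iterated) pyramid over a simple polytope, then \cref{lem:2d+s-dplus3-simple-pyramid}(ii) with $\ell=2$ already gives $f_{k}(P)\ge\eta_{k}(2d+2,d)$ for all $k$; the remaining pyramids are absorbed into the analysis that follows. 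Choose a facet $F$ of $P$ that avoids some vertex $u$ of degree $\ge d+1$ (this is possible, since no vertex lies on all facets), and among all such facets take one with the maximum number of vertices $\phi$. Put $W:=V(P)\setminus V(F)$, so $u\in W$ and $1\le|W|=2d+2-\phi$. Every $k$-face of $P$ is either a $k$-face of $F$ or contains a vertex of $W$, so with $v_{1}:=u$ in \cref{prop:number-faces-outside-facet}/\cref{cor:number-faces-outside-facet-practical},
\[
f_{k}(P)\ \ge\ f_{k}(F)+\sum_{i=1}^{|W|}f_{k-1}(F_{i}/v_{i})\ \ge\ f_{k}(F)+\theta_{k-1}(d+1,d-1)+\sum_{i=2}^{|W|}\binom{d-i+1}{k}.
\]
When $|W|\ge 2$, $F$ is a $(d-1)$-polytope with $\phi\le 2d=2(d-1)+2$ vertices, and $f_{k}(F)$ is bounded below by the inductive hypothesis (if $F$ has $\ge(d-1)+3$ facets) or otherwise by \cref{thm:at-most-2d}, \cref{thm:2dplus1}, \cref{lem:lower-bound-dplus2-facets} according to $\phi$ and the number of facets of $F$; when $|W|=1$, $P=\pyr(F)$ and one uses the multifold-pyramid formula after reducing to a non-pyramidal base (handled by \cref{lem:2d+s-dplus3-simple-pyramid} or directly). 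Substituting these bounds and simplifying binomials through Vandermonde-type identities then gives $f_{k}(P)\ge\eta_{k}(2d+2,d)$ for all $k\in[1\ldots d-2]$.

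\emph{Main obstacle.} The only genuinely delicate configurations are those in which a single facet carries all but two vertices of $P$: either $|W|=2$ with $\phi=2d$ (so $F$ is a $(d-1)$-polytope with $2(d-1)+2$ vertices), or $P=\pyr(Q)$ with $Q$ a $(d-1)$-polytope with $2(d-1)+3$ vertices — a count that lies just outside the range of the inductive hypothesis. In the first case, if $F$ attains the minimum $\eta_{k}(2(d-1)+2,d-1)$ (for instance $F\cong J(3,d-1)$), then the two vertex-figure terms in the displayed inequality fall short of the required gap $\eta_{k}(2d+2,d)-\eta_{k}(2(d-1)+2,d-1)$ by exactly $\binom{d-3}{k-1}$, so the generic estimate fails and one must instead exploit the forced structure of $P$ — combinatorially a wedge over, or a low-fold pyramid over, a polytope already controlled by \cref{thm:2dplus1}, \cref{lem:2d+s-dplus3-simple-pyramid}, or \cref{wedge} — together with an exact count of the $k$-faces meeting both $W$ and $F$, which is what `pays for' the two extra vertices; the pyramid case is handled analogously, via the multifold-pyramid reduction to a non-pyramidal base. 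Everything else is bookkeeping: verifying, uniformly in $k\in[1\ldots d-2]$, that each assembled lower bound dominates $\eta_{k}(2d+2,d)$.
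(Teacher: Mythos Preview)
The paper does not prove this theorem; it is quoted as an external result from \cite{PinTriYos24} in the section on existing lower bounds. There is therefore no proof in the paper to compare against.

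As for your proposal, it is a plan rather than a proof, and it leaves the central difficulty unresolved. You correctly identify that the delicate cases are $|W|=2$ with $\phi=2d$ and $P=\pyr(Q)$ with $f_{0}(Q)=2(d-1)+3$, and you compute that in the former your generic displayed estimate falls short by $\binom{d-3}{k-1}$. But you do not close this gap: the sentence ``one must instead exploit the forced structure of $P$ \ldots\ together with an exact count of the $k$-faces meeting both $W$ and $F$'' names the missing work without doing it. In particular, it is not true that equality in the inductive bound on $f_{k}(F)$ forces $P$ to be a wedge or a low-fold pyramid in any direct way; one actually needs a structural case analysis of the two external vertices (their degrees, the ridges of $F$ they sit over, whether they are adjacent) of the sort carried out at length in \cite{PinTriYos24} and mirrored in this paper's \cref{cl:3minus1-minus-2}. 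Similarly, the pyramid case with $f_{0}(Q)=2(d-1)+3$ genuinely lies outside your inductive hypothesis and outside \cref{thm:2dplus1}, so ``the multifold-pyramid reduction'' is not available without further argument.

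The remaining steps are also underspecified: ``$d=4$ follows from the classification'' and ``substituting these bounds and simplifying binomials through Vandermonde-type identities then gives $f_{k}(P)\ge\eta_{k}(2d+2,d)$'' each stand in for several pages of case-splitting in the cited source. The overall strategy (induction on $d$, Xue's counting via \cref{cor:number-faces-outside-facet-practical}, separate treatment of facets with $d+1$ versus $\ge d+2$ ridges) is the right one and matches what \cite{PinTriYos24} does, but the proposal as written does not constitute a proof.
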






Each triplex has $d+2$ facets.  The structure of  $d$-polytopes with $d+2$ facets is well understood; see, for instance, \cref{lem:dplus2facets}. In lower bound theorems for  $d$-polytopes with few vertices (see, for instance, \cref{thm:2dplus1,thm:2dplus2}),  a dichotomy among the minimisers emerges: they have either $d+2$ or $d+3$ facets. Pineda-Villavicencio et al.~\cite{PinWanYos24a}  extended this dichotomy to $d$-polytopes with at most $2d$ vertices by refining \Cref{thm:at-most-2d}. For convenience, define
\begin{equation}
\label{eq:atmost2d-refined}
\zeta_k(d+s,d):=\theta_k(d+s,d)+\binom{d-1}{k}-\binom{d+1-s}{k}.
\end{equation}

  \begin{theorem}[Refined theorem for $d$-polytopes with at most $2d$ vertices]
\label{thm:at-most-2d-refined-short}  
Given parameters $d\ge 3$ and $2\le s \le d$, and a  $d$-polytope $P$ with $d+s$ vertices, the following statements hold: 
\begin{enumerate} [{\rm (i)}]
    \item If  $P$ has $d+2$ facets, then $f_k(P)\ge \theta_k(d+s,d)$ for all $k\in[1\ldots d-2]$. Furthermore, for some $k\in[1\ldots d-2]$, there is a unique polytope whose number of $k$-faces equals $\theta_k(d+s,d)$.
    \item If $s=2$ and $P$ has at least $d+\ell$ facets where $\ell\in [3\ldots d]$, then $f_k(P)\ge \theta_k(d+2,d)+\binom{d-1}{k-1}-\binom{d-\ell+1}{k-\ell+1}$ for all $k\in[1\ldots d-2]$. Moreover, for each  $k\ge \ell-1$,  there is a unique polytope  whose number of $k$-faces equals this lower bound.
    \item If $s\in [3\ldots d]$ and $P$ has at least $d+3$ facets, then $f_k(P)\ge\zeta_k(d+s,d)$. Additionally, for any fixed $k\in[1\ldots d-2]$, there are two polytopes for which the number of $k$-faces equals $\zeta_k(d+s,d)$. If  $s<d$ and $k\ge d-s+2$, a third minimiser arises, and when $s=4$, two further examples appear.  
\end{enumerate}
\end{theorem}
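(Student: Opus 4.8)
The plan is to prove the three parts by combining the unrefined lower bound theorem for $d$-polytopes with at most $2d$ vertices (\cref{thm:at-most-2d}) with a structural analysis of the facets, and then sharpen the counting using \cref{prop:number-faces-outside-facet}. For part~(i), since $P$ has exactly $d+2$ facets, \cref{lem:dplus2facets} applies: $P$ is a $(d-a)$-fold pyramid over $T(m)\times T(a-m)$ for suitable $a,m$, and \eqref{eq:dplus2facets} gives $f_k(P)=\rho_k(a,m,d)$ exactly. The inequality $f_k(P)\ge\theta_k(d+s,d)$ then reduces to a monotonicity statement among the $\rho_k(a,m,d)$ for fixed $f_0=d+1+m(a-m)=d+s$, which follows from \cref{lem:dplus2-vertices-inequalities} applied in the dual: among all $(a,m)$ with $m(a-m)=s-1$, the minimiser of the number of $k$-faces is the one with $m$ as small as possible (namely $m=1$, $a=s$, which is the $(s,d-s)$-triplex when $s\le d$, or $m=2$ when $s-1$ is not achievable with $m=1$ and $a\le d$). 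Uniqueness for a given $k$ is then read off from the strict-inequality clauses of \cref{lem:dplus2-vertices-inequalities}.

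For part~(iii) (and part~(ii) as the $s=2$ analogue), the plan is an induction on $s$ following the strategy of \cite{Xue21}: pick a vertex $v$ whose deletion, or whose ``link facet'', reduces the vertex count, and count the $k$-faces of $P$ as those avoiding $v$ plus those containing $v$. The faces avoiding a well-chosen facet $F$ are estimated by \cref{cor:more-dpluss} applied to $F$ (which has at least $d-1+\text{(something)}$ vertices), while the faces through the remaining vertices are estimated by the versatile counting bound \cref{prop:number-faces-outside-facet}(ii), i.e.\ $\sum_i f_{k-1}(F_i/v_i)$. The extra term $\binom{d-1}{k}-\binom{d+1-s}{k}$ in $\zeta_k(d+s,d)$ should emerge precisely as the surplus contributed by the ``second'' special facet once $P$ is forced to have $\ge d+3$ facets: with only $d+2$ facets the link structure collapses to the triplex count $\theta_k$, whereas a third facet forces an extra simplex-type summand. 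The identification of the two (or three, or more) minimisers for fixed $k$ is done by tracking exactly when each inequality in the induction is tight, and then exhibiting explicit polytopes — the two coming from the two ways of ``adding a facet'' to an $(s,d-s)$-triplex (truncating a vertex versus the direct-sum/prism variant), the third from the case $k\ge d-s+2$ where a pyramid-over-prism also attains equality, and the two extra ones at $s=4$ from small-dimensional coincidences.

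The main obstacle I expect is part~(iii): establishing that the inductive inequality is tight for *exactly* the claimed list of polytopes and no others, for every $k$ in the stated range. The counting bound \cref{prop:number-faces-outside-facet} is one-sided, so equality analysis requires going back into its proof (as in \cite{Xue21}) to determine when each $F_i/v_i$ must itself be a simplex or a triplex-type vertex figure — a delicate case analysis that interacts with the parity- and range-dependent conditions ($s<d$ and $k\ge d-s+2$; $s=4$). A secondary technical nuisance is verifying the binomial identity $\zeta_k(d+s,d)=\theta_k(d+s,d)+\binom{d-1}{k}-\binom{d+1-s}{k}$ agrees with the $f$-vectors of the proposed minimisers for all $k$; this is routine via Vandermonde-type identities (cf.\ \cref{lem:combinatorial-identities}) but must be done carefully to pin down the ``$k\ge \ell-1$'' and ``$k\ge d-s+2$'' thresholds in the uniqueness statements.
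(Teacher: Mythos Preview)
The paper does not prove this theorem; it is quoted as a result of Pineda-Villavicencio, Wang, and Yost~\cite{PinWanYos24a} and used as a black box (see the sentence immediately preceding the statement). So there is no in-paper proof to compare against, and your proposal must be judged on its own.

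For part~(i) your route through the dual of \cref{lem:dplus2-vertices-inequalities} is overkill and slightly garbled: for every $s\in[2\ldots d]$ the pair $(m,a)=(1,s)$ is admissible, so the ``or $m=2$'' branch never occurs. In fact part~(i) is immediate from \cref{thm:at-most-2d}: that theorem already gives $f_k(P)\ge\theta_k(d+s,d)$ for \emph{all} $d$-polytopes with $d+s$ vertices, and identifies the $(s,d-s)$-triplex---which has exactly $d+2$ facets---as the unique minimiser for each $k\in[1\ldots d-2]$. No further argument is needed.

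Parts~(ii) and~(iii) are where the content lies, and here your proposal is a plan, not a proof. Saying ``induction on $s$ following the strategy of \cite{Xue21}'' and invoking \cref{prop:number-faces-outside-facet} names the right toolkit, but you have not specified which facet $F$ to pick, how many vertices lie outside it, or why the resulting binomial sums assemble into exactly $\zeta_k(d+s,d)=\theta_k(d+s,d)+\binom{d-1}{k}-\binom{d+1-s}{k}$ rather than merely $\theta_k$. The extra term does not drop out of a generic application of Xue's proposition; it requires exploiting the hypothesis $f_{d-1}(P)\ge d+3$ to force an additional facet beyond those meeting $F$ in ridges, and then tracking that extra facet's contribution. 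You have gestured at this (``a third facet forces an extra simplex-type summand'') without carrying it out. More seriously, you explicitly flag the equality characterisation---the enumeration of two, three, or five minimisers depending on $s$ and $k$---as ``the main obstacle'', and then stop. That is the heart of part~(iii), and leaving it as an acknowledged open item means the proof is incomplete.
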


We continue with a basic bound for the number of $k$-faces in a polytope.

\begin{lemma} If $F$ is a facet of a $d$-polytope $P$, then, for each $k\in [0\ldots d-1]$,  $f_{k}(P)\ge f_{k}(F)+f_{k-1}(F)$. 
\label[lemma]{lem:facet-k-faces}
\end{lemma}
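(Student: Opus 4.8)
The final statement to prove is \cref{lem:facet-k-faces}: if $F$ is a facet of a $d$-polytope $P$, then $f_k(P)\ge f_k(F)+f_{k-1}(F)$ for each $k\in[0\ldots d-1]$.

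\medskip

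The plan is to exploit a shelling of the boundary complex of $P$. First I would invoke the Bruggesser--Mani theorem: the boundary complex $\partial P$ is shellable, and moreover one can choose a shelling order $F=F_1, F_2, \dots, F_{f_{d-1}(P)}$ that begins with the prescribed facet $F$. In such a line shelling, when we add the second facet $F_2$, it is glued to $F_1=F$ along a nonempty union of facets of $F_2$; in particular $F_1\cap F_2$ is a proper nonempty face of $P$, hence every $k$-face of $F$ is already a $k$-face of $P$, and simultaneously $F_2$ contributes its own $k$-faces. The cleanest route, though, avoids even needing the shelling machinery in full: since $F$ is a facet, it lies in a supporting hyperplane $H$, and because $P$ is a $d$-polytope there is at least one other facet $F'$ adjacent to $F$ (sharing a ridge).

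\medskip

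So the key steps in order are: (1) Observe that every $k$-face of $F$ is a $k$-face of $P$ (faces of a facet are faces of the polytope), giving the $f_k(F)$ term. (2) Account for the $f_{k-1}(F)$ term: each $(k-1)$-face $G$ of the facet $F$ is a $(k-1)$-face of $P$ lying in the hyperplane $H$; I claim each such $G$ is contained in some $k$-face of $P$ that is \emph{not} contained in $H$, and distinct $G$'s yield distinct such $k$-faces. Concretely, one can argue as follows: take a point $x$ in the interior of $P$ and, for each $(k-1)$-face $G$ of $F$, consider the smallest face of $P$ containing both $G$ and a point just outside $H$ near $G$; more carefully, use that $G$, being a face of $P$ in $H$, is the intersection of $F$ with the set of faces of $P$ containing $G$, and among the faces of $P$ properly containing $G$ there is one of dimension exactly $k$ not contained in $H$ — this follows because the faces of $P$ containing $G$ form the face lattice of the quotient $P/G$, which has dimension $d-1-\dim G = d-k$, and in that quotient the faces of $F$ through $G$ form a facet, so there is a $1$-face of $P/G$ (i.e.\ a $k$-face of $P$ through $G$) not in that facet. (3) Conclude that the $f_k(F)$ faces of $F$ and the at-least-$f_{k-1}(F)$ faces obtained in step (2) are all distinct $k$-faces of $P$ (the former lie in $H$, the latter do not), so $f_k(P)\ge f_k(F)+f_{k-1}(F)$.

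\medskip

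The main obstacle is step (2) — making the injection $G\mapsto$ (a $k$-face of $P$ through $G$ not in $H$) precise and genuinely injective. The assignment via $P/G$ is clean for injectivity at a fixed $G$, but one must ensure two different ridges $G_1\ne G_2$ of $F$ do not get sent to the same $k$-face $K$ of $P$; this holds because $K\not\subseteq H$ forces $K\cap H$ to be a proper face of $K$ of dimension $\le k-1$, and $K\cap F$ is a face of $K$ contained in $H$, so $K$ determines $K\cap F$ uniquely — but if $G_1, G_2 \subseteq K$ with $G_i\subseteq F$, then $G_1, G_2\subseteq K\cap F$, and since $\dim G_i = k-1 = \dim K - 1 \ge \dim(K\cap F)$ we would need $G_1 = K\cap F = G_2$, a contradiction. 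Once this bookkeeping is in place the lemma follows immediately; the inductive/shelling phrasing is an alternative but the direct face-lattice argument is shortest and is what I would write up.
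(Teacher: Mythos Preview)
The paper states this lemma without proof, calling it simply ``a basic bound'' --- so there is no argument to compare against.

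Your proof is correct and is the standard one. The decomposition into (i) the $k$-faces of $F$ inside the supporting hyperplane $H$ and (ii) one $k$-face of $P$ through each $(k-1)$-face $G$ of $F$ that is not contained in $H$ is exactly right, and your injectivity check in step~(3) (via $K\cap F$ being a face of $K$ of dimension exactly $k-1$) is clean.

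One small indexing slip: in step~(2) you write ``there is a $1$-face of $P/G$ (i.e.\ a $k$-face of $P$ through $G$) not in that facet.'' With the usual convention, the face figure $P/G$ has dimension $d-\dim G-1=d-k$, and its \emph{vertices} (not its edges) correspond to the $k$-faces of $P$ containing $G$; the facet of $P/G$ coming from $F$ misses at least one vertex because $d-k\ge1$. So replace ``$1$-face'' by ``vertex'' and the sentence is correct. This does not affect the validity of the argument.
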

  
We next present with a characterisation of simplices and state the excess theorem.

\begin{proposition} A $d$-polytope is a pyramid over $d-1$ or more of its facets if and only if it is a simplex.
  \label[proposition]{prop:polytope-pyramid-all-facets}
\end{proposition}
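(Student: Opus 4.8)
The plan is to dispatch the forward implication trivially and prove the converse by induction on $d$, reducing a ``pyramid over many facets'' statement in dimension $d$ to the same statement in dimension $d-1$.

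For the ``if'' direction there is nothing to do: a $d$-simplex $T(d)$ is a pyramid over each of its $d+1\ge d-1$ facets. For the ``only if'' direction I would first record the elementary characterisation that a $d$-polytope $P$ is a pyramid over one of its facets $F$ precisely when $f_0(F)=f_0(P)-1$, i.e.\ when $V(F)=V(P)\setminus\{v\}$ for some vertex $v$ (the apex). Indeed, if $F$ is such a facet then $\aff F$ is a hyperplane meeting $P$ exactly in $F$, so $v\notin\aff F$ and $P=\pyr_{v}(F)$; the reverse implication is immediate. Since a face of a polytope is determined by its vertex set, distinct facets over which $P$ is a pyramid have distinct apexes.

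The core step is a reduction lemma: if $P=\pyr_{v_1}(F_1)$ is simultaneously a pyramid over facets $F_2,\dots,F_m$ with apexes $v_2,\dots,v_m$ and $m\ge 2$, I would invoke the standard description of the facets of a pyramid---every facet of $\pyr_{v_1}(F_1)$ other than $F_1$ has the form $\conv(R\cup\{v_1\})$ for a facet $R$ of $F_1$---to write $F_j=\conv(R_j\cup\{v_1\})$ with $R_j:=F_1\cap F_j$ a facet of $F_1$. A short vertex-set computation then gives $V(R_j)=V(F_1)\setminus\{v_j\}$, so by the characterisation above $F_1$ is a pyramid over each of $R_2,\dots,R_m$, and these facets of $F_1$ are pairwise distinct because the $v_j$ are. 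Hence: whenever a $d$-polytope $P$ is a pyramid over $m\ge 2$ of its facets, the $(d-1)$-polytope $F_1$ is a pyramid over at least $m-1$ of its facets.

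Feeding this into an induction on $d$ finishes the argument. For the base case $d=2$, a polygon that is a pyramid over an edge has exactly $3$ vertices, hence is a triangle. For $d\ge 3$, if $P$ is a pyramid over $m\ge d-1\ge 2$ facets, the reduction gives a facet $F_1$ that is a $(d-1)$-polytope and a pyramid over at least $d-2=(d-1)-1$ of its facets; by the inductive hypothesis $F_1=T(d-1)$, and therefore $P=\pyr_{v_1}(F_1)=T(d)$. The step most in need of care is the reduction itself---checking that $F_1\cap F_j$ is genuinely a facet of $F_1$ with vertex set $V(F_1)\setminus\{v_j\}$---which rests on the explicit facet structure of pyramids together with the fact that faces are determined by their vertex sets; everything else is routine.
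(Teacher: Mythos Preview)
Your argument is correct. The paper states this proposition without proof, treating it as an elementary fact, so there is no ``paper's own proof'' to compare against. Your induction via the reduction lemma---passing from $P=\pyr_{v_1}(F_1)$ being a pyramid over $m$ facets to $F_1$ being a pyramid over $m-1$ facets---is clean and complete; the identification $R_j=F_1\cap F_j$ with $V(R_j)=V(F_1)\setminus\{v_j\}$ is exactly what the facet structure of pyramids gives, and the base case and inductive step go through as you describe.
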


The \emph{excess degree} of a vertex $u$ in a $d$-polytope is defined as $\deg u-d$. The \emph{excess degree} of  the polytope is  defined as  the sum of the excess degrees of its vertices \cite{PinUgoYos16a}. The excess theorem is stated below.

\begin{theorem}[Excess theorem, {\cite[Thm.~3.3]{PinUgoYos16a}}] The excess degree of any nonsimple $d$-polytope is at least $d-2$.
    \label{thm:excess-degree}
\end{theorem}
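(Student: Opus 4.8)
The plan is to argue by induction on $d$, using vertex figures to do the bookkeeping on degrees. Write $\xi(P)=\sum_{v}(\deg_P v-d)$ for the excess degree, a sum of nonnegative terms that vanishes exactly when $P$ is simple; so for nonsimple $P$ we start from $\xi(P)\ge 1$ and must improve this to $\xi(P)\ge d-2$. The base cases $d=2,3$ are immediate: a nonsimple $3$-polytope has a vertex of degree at least $4=2\cdot 3-2$, contributing at least $d-2$. For the inductive step ($d\ge 4$), I would first dispose of the easy case in which some vertex $v$ has $\deg_P v\ge 2d-2$: then already $\xi(P)\ge \deg_P v-d\ge d-2$. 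So the real work is the case where every vertex has degree at most $2d-3$.

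In that case I would fix any nonsimple vertex $v$, so $d+1\le \deg_P v\le 2d-3$, and pass to the vertex figure $Q:=P/v$, which is a $(d-1)$-polytope with exactly $\deg_P v$ vertices. Since $d<\deg_P v<2(d-1)$, the polytope $Q$ has too many vertices to be a $(d-1)$-simplex and too few to be a simple non-simplex $(d-1)$-polytope; here I would invoke the standard fact that a simple $m$-polytope has either $m+1$ vertices or at least $2m$ vertices (which follows from \cref{thm:simple-lbt} in the case $k=0$). Consequently $Q$ is a nonsimple $(d-1)$-polytope, and the induction hypothesis yields $\xi(Q)\ge (d-1)-2=d-3$.

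The heart of the argument is a transfer inequality relating $\xi(Q)$ to $\xi(P)$, namely $\xi(Q)\le \xi(P)-(\deg_P v-d)$; together with the previous step and $\deg_P v-d\ge 1$, this gives $\xi(P)\ge \xi(Q)+1\ge d-2$, completing the proof. To establish it, I would use that the vertices of $Q=P/v$ are in bijection with the neighbours $u$ of $v$, and that $\deg_Q\bar u$ equals the number of $2$-faces of $P$ through the edge $uv$; by the symmetric description of this number inside $P/u$ one gets $\deg_Q\bar u=\deg_{P/u}\bar v\le \deg_P u-1$, since $P/u$ is a $(d-1)$-polytope with $\deg_P u$ vertices. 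Hence each summand satisfies $\deg_Q\bar u-(d-1)\le \deg_P u-d$, and it is $0$ whenever $u$ is a simple vertex of $P$ (because then $P/u$ is a simplex and $\bar v$ has degree $d-1$ in it). Summing over the neighbours of $v$ bounds $\xi(Q)=\sum_{u\sim v}\bigl(\deg_Q\bar u-(d-1)\bigr)$ by $\sum_{u\sim v,\ u\text{ nonsimple}}(\deg_P u-d)$, which is at most $\sum_{w\ne v,\ w\text{ nonsimple}}(\deg_P w-d)=\xi(P)-(\deg_P v-d)$, as needed.

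I expect the main obstacle to be precisely this transfer inequality: one must correctly identify $\deg_Q\bar u$ with the number of $2$-faces of $P$ on the edge $uv$ (hence with $\deg_{P/u}\bar v$), and then recognise that the excess of the vertex figure $Q$ is entirely accounted for by the nonsimple neighbours of $v$, so that it is dominated by the part of $\xi(P)$ coming from vertices other than $v$. A secondary point requiring care is verifying that $P/v$ always lands in the range $d<\deg_P v<2(d-1)$ that forces it to be nonsimple — which is exactly what the ``$m+1$ or $\ge 2m$ vertices'' dichotomy for simple polytopes supplies — and checking that the dimension drop keeps the induction well founded down to the base case $d=3$.
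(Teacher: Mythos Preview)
The paper does not give its own proof of this theorem; it is simply quoted from \cite[Thm.~3.3]{PinUgoYos16a} and used as a black box later in the argument. So there is no in-paper proof to compare against.

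That said, your argument is correct and is in fact the same inductive mechanism used in the original source: pass to a vertex figure at a nonsimple vertex, use the dichotomy that a simple $(d-1)$-polytope has either $d$ or at least $2(d-1)$ vertices to force $P/v$ to be nonsimple whenever $d+1\le\deg_P v\le 2d-3$, apply induction to get $\xi(P/v)\ge d-3$, and then transfer back via the inequality $\deg_{P/v}\bar u\le \deg_P u-1$. Your identification $\deg_{P/v}\bar u=\deg_{P/u}\bar v$ (both count the $2$-faces of $P$ through the edge $uv$) is exactly the right way to justify the transfer step, and your observation that simple neighbours of $v$ contribute zero on both sides makes the bookkeeping clean. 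The invocation of \cref{thm:simple-lbt} with $k=0$ and $f_{d-1}\ge d+2$ to obtain the ``$m+1$ or $\ge 2m$'' dichotomy is also fine.
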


Barnette and Reay~\cite{BarRea73} characterised the pairs $(f_{0},f_{2})$ for which there is a 4-polytope with $f_{0}$ vertices and $f_{2}$ 2-faces.   Their result is as follows.   

\begin{theorem}[{\cite[Thm.~10]{BarRea73}}] There is a  $4$-polytope $P$ such that $(f_{0},f_{2}):=(f_{0}(P),f_{2}(P))$ if and only if    
\begin{equation*}
10\le \frac{1}{2}\left(2f_{0}+3+\sqrt{8f_{0}+9}\right)\le f_{2}\le f_{0}^{2}-3f_{0},     
\end{equation*}    
\label{thm:projections-d4-f0f2}
$ f_{2}\ne f_{0}^{2}-3f_{0}-1$, and $(f_{0}, f_{2})$ is not one of the pairs $(6, 12), (6, 14), (7, 13)$,  $(7, 15), (8, 15),(8, 16)$, $ (9, 16), (10, 17),  (11, 20), (13, 21)$.     
\end{theorem}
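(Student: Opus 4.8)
Since this is the classical Barnette--Reay characterisation of the $(f_0,f_2)$-projection of the $f$-vector set of $4$-polytopes, the plan splits into a \emph{necessity} half (every $4$-polytope gives an admissible pair) and a \emph{sufficiency} half (every admissible pair is realised), and these are handled by quite different techniques. For necessity I would work throughout with Euler's relation $f_0-f_1+f_2-f_3=0$. The bound $f_2\ge 10$ is just the statement that the $4$-simplex minimises $f_2$ (any $4$-polytope has at least $5$ vertices and at least $5$ facets). For the curved lower bound, rewrite Euler as $f_2=f_1+f_3-f_0$, use $2f_1=\sum_v\deg(v)\ge 4f_0$ to get $f_2\ge f_0+f_3$, and apply the Upper Bound Theorem to the \emph{dual} polytope $P^{*}$ (which has $f_3$ vertices and $f_0$ facets) to get $2f_0\le f_3(f_3-3)$, hence $f_3\ge\tfrac12(3+\sqrt{8f_0+9})$; substituting gives $f_2\ge\tfrac12(2f_0+3+\sqrt{8f_0+9})$, with equality forcing $P$ simple and $P^{*}$ neighbourly (so $P=C(c,4)^{*}$, which needs $f_0=\tfrac12 c(c-3)$). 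For the upper bound, $f_2\le f_0^2-3f_0$ is the Upper Bound Theorem applied to $P$ itself, attained by $C(f_0,4)$.

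The exclusion $f_2\ne f_0^2-3f_0-1$ I would reduce to face-counting: this value forces $f_1+f_3$ to be exactly one below its joint maximum $\binom{f_0}{2}+\tfrac12 f_0(f_0-3)$, so either $P$ is $2$-neighbourly with one facet short of the maximum, or $P$ has exactly one non-edge with the maximum facet number. A $2$-neighbourly $4$-polytope is simplicial (every facet is a $3$-polytope whose graph is complete, hence planar, hence a tetrahedron, since an edge of $P$ with both ends in a facet lies in that facet), and a simplicial $4$-polytope has exactly $\tfrac12 f_0(f_0-3)$ facets by Euler together with the $2f_2=4f_3$ incidence count; in the other case a single non-edge comes from a single non-simplex facet with five vertices, and the same incidence count then gives $f_2=2f_3+1$ rather than the required $2f_3-1$. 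For the ten sporadic exclusions I would use one template: fix the pair, use $f_1+f_3=f_0+f_2$ with $2f_0\le f_1\le\binom{f_0}{2}$ to list the few possible values of $f_3$, and eliminate each—when $P$ is simple the dual is a simplicial $4$-polytope violating the lower bound theorem for simplicial polytopes ($f_3\ge 3f_0-10$, the polar form of \cref{thm:simple-lbt}); non-simple cases have excess degree at least $2$ by \cref{thm:excess-degree}, which keeps the list short; and the survivors die because a $4$-polytope with five facets is a simplex, a $2$-neighbourly $4$-polytope has the maximal facet number, and, for the smallest $f_0$, by the classification of $4$-polytopes with few vertices.

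For sufficiency I would, for each admissible pair, build an explicit $4$-polytope from a short list of \emph{frame} polytopes using a toolkit of local moves whose effect on $(f_0,f_2)$ is exactly computable: stacking a vertex onto a simplex facet, $(f_0,f_2)\mapsto(f_0+1,f_2+6)$; stacking onto a non-simplex facet, with larger second increment; truncating a simple vertex, $(f_0,f_2)\mapsto(f_0+3,f_2+4)$; and placing a new vertex beyond a chosen edge or $2$-face, raising $f_0$ by one and $f_2$ by an intermediate amount. The frame polytopes are the cyclic polytopes $C(n,4)$ (on the upper curve), their duals $C(n,4)^{*}$ and small truncations of these (on and just above the lower curve), bipyramids over tetrahedra-type polytopes (which realise $f_2=f_0^2-3f_0-2$), and prisms over simple $3$-polytopes. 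Then one argues by induction on $f_0$: within each slab $\{f_0=n\}$ the achievable $f_2$-values fill the prescribed interval minus the listed gaps, obtained by combining the moves to climb up from a near-minimal polytope and to step down from $C(n,4)$ (the step-down near the top removes two $2$-faces at a time, which is precisely why $f_0^2-3f_0-1$ is skipped), and the moves that raise $f_0$ by one carry the $n$-slab conclusions onto the $(n+1)$-slab.

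The hard part will be the bookkeeping in the sufficiency direction: one must check that the menu of moves applied to the frame polytopes reaches \emph{every} admissible pair while \emph{avoiding} all eleven forbidden pairs, and this is delicate exactly along the two boundary curves (where the moves are least flexible) and near the small values of $f_0$ that host the sporadic holes. A secondary difficulty is the fine structural analysis required on the necessity side to rule out the sporadic pairs, where one must control $4$-polytopes that are nearly simple or nearly neighbourly; that is where the Upper Bound Theorem, the lower bound theorem for simplicial $4$-polytopes, and the excess-degree theorem (\cref{thm:excess-degree}) carry the argument, rather than soft counting.
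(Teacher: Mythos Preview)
The paper does not prove this statement: it is quoted verbatim from Barnette and Reay~\cite{BarRea73} and is used as an input in the $d=4$ base case of \cref{thm:3minus1}. There is therefore nothing in the paper to compare your proposal against; you have written a full proof sketch for a result that the paper simply cites.

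As a proof of the Barnette--Reay theorem on its own merits, your necessity arguments for the two boundary curves are sound. The exclusion $f_2\ne f_0^2-3f_0-1$ is the right shape, but your ``single non-edge'' branch is too quick: having $f_1=\binom{f_0}{2}-1$ does not by itself force exactly one non-simplex facet with five vertices, and you would need to argue first that every facet has at most five vertices (otherwise a $3$-face on six or more vertices already kills several potential edges) and that no two non-simplex facets can coexist. Your treatment of the sporadic pairs leans on the excess theorem (\cref{thm:excess-degree}), which is a 2018 result; this is not circular, but it is anachronistic for a 1973 theorem, and in fact Barnette and Reay's original argument for several of these pairs is a direct case analysis that does not need it. The sufficiency half is, as you say, the bookkeeping-heavy part; your menu of moves and frame polytopes is the right idea, but ``one argues by induction on $f_0$'' hides real work, and Barnette--Reay's paper devotes most of its length to exactly this verification.
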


\section{Proof of the main theorem}
\label{sec:main-proof}

\begin{theorem}Let $d\ge 3$,  $\ell\ge 1$, and let $P$ be a $d$-polytope with $2d+\ell$ vertices. If $P$ has at least $d+3$ facets, then $f_{k}(P)\ge \eta_{k}(2d+\ell,d)$, for each $k\in [1\ldots d-2]$. Moreover, equality holds for some $k$ only if $P$ has exactly $d+3$ facets.
 \label{thm:3minus1}
\end{theorem}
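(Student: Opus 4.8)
The overall strategy is induction on $d$ together with a case split on the number of vertices $2d+\ell$, mirroring the structure already set up in the excerpt (the cases $\ell=1,2$ are \cref{thm:2dplus1,thm:2dplus2}, so we may take $\ell\ge 3$, and when $\ell\ge d$ the target function is constant equal to $\eta_k(3d-1,d)$ by \eqref{eq:3minus1-function-extension}). The base dimension $d=3$ is handled directly: a $3$-polytope with at least $9$ vertices and at least $6$ facets has, by Euler's relation and the fact that a $3$-polytope with $6$ facets has at most $2\cdot 6-4=8$ vertices, strictly more facets, and one computes $f_1 = f_0+f_2-2$ together with $f_2\ge \lceil f_0/2\rceil +\dots$ to beat $\eta_1(9,3)=\eta_1(3\cdot3-1,3)$; the equality analysis is vacuous there since no minimiser has $d+3$ facets in this range. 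For the inductive step $d\ge 4$, I would fix $k\in[1\ldots d-2]$ and distinguish according to the combinatorial type of $P$.

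\textbf{Case A: $P$ is a multifold pyramid (or more generally has a "large" vertex).} If $P$ is a $t$-fold pyramid over a simple polytope with $t\ge 1$, \cref{lem:2d+s-dplus3-simple-pyramid} already delivers the bound and the equality characterisation, so this case is closed by citation. More generally, if $P$ is a pyramid over some facet $F$, then $F$ has $2(d-1)+(\ell+1)$ vertices and, since $P$ has at least $d+3$ facets, $F$ has at least $(d-1)+2$ facets; if $F$ has at least $(d-1)+3$ facets we apply the induction hypothesis to $F$ and then \cref{lem:facet-k-faces} ($f_k(P)=f_k(F)+f_{k-1}(F)$ for pyramids) combined with the additivity $\eta_k(2(d-1)+\ell+1,d-1)+\eta_{k-1}(2(d-1)+\ell+1,d-1)=\eta_k(2d+\ell+1,d)\ge\eta_k(2d+\ell,d)$, extracting strictness/equality exactly as in the proof of \cref{lem:2d+s-dplus3-simple-pyramid}; if $F$ has $(d-1)+2$ facets we instead invoke \cref{lem:lower-bound-dplus2-facets} (or \cref{lem:lower-bound-dplus2-facets-extra}) applied to $F$ and check that $\tau_{k-1}+\tau_k$ for $F$ still exceeds $\eta_k(2d+\ell,d)$, which should hold with room to spare because $d+2$-facet polytopes are comparatively face-rich. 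So the genuinely new work is the non-pyramid case.

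\textbf{Case B: $P$ has no simple vertex of "excess reach", handled by Xue's proposition.} When $P$ is not a pyramid, I would apply \cref{cor:number-faces-outside-facet-practical} to a carefully chosen sequence $S=(v_1,\dots,v_r)$ of vertices and a complementary facet $F^\ast$: pick $F^\ast$ to be a facet with few vertices (ideally one realising the minimum, so that $F^\ast$ is itself small, e.g.\ a triplex-like facet), let $S$ enumerate the vertices outside $F^\ast$, bound $f_k(P)\ge f_k(F^\ast)+(\text{faces meeting }S)$, apply the induction hypothesis (via \cref{cor:more-dpluss}) to $F^\ast$, and bound the second term from below by $\sum_i\theta_{k-1}(d-i+1+s_i,d-i)$. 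The delicate part is choosing $r$ and the $s_i$: one needs $r\le d+1$, which forces $f_0(P)-f_0(F^\ast)\le d+1$, i.e.\ $F^\ast$ must have at least $d+\ell-1$ vertices — this is where the hypothesis $\ell\le d-1$ matters, and where the shelling argument alluded to in the introduction (\cref{cl:3minus1-less-2dplus1}, inspired by Blind–Blind) is presumably used to locate a facet with enough but not too many vertices, or to bound the number of vertices outside a chosen facet. I expect this sub-case to further bifurcate by the size of the largest facet: when every facet is "small" (at most $2(d-1)$ vertices, say) the shelling gives good control; when some facet is large it has many vertices and few vertices lie outside it, so Xue's proposition with $r$ small but large $s_i$'s suffices.

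\textbf{Main obstacle and equality analysis.} The hard part will be Case B: getting the two lower-bound contributions — $f_k(F^\ast)$ from induction and $\sum_i\theta_{k-1}(\dots)$ from Xue's estimate — to add up to at least $\eta_k(2d+\ell,d)$ uniformly over all $k$ and $\ell$, since these binomial inequalities are tight only in very specific configurations and the arithmetic is unforgiving near the extreme values $\ell\approx d-1$ and $k\approx d-2$. This is exactly where the combinatorial lemmas (the Vandermonde-type identities such as \cref{lem:combinatorial-identities}, \cref{lem:combinatorial-ineq}) do the heavy lifting, and where a sub-case analysis on $k$ versus $d-\ell$ is unavoidable. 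For the "moreover" clause, the key structural point is that equality anywhere forces every inequality used to be tight: induction on $F^\ast$ forces $F^\ast$ to be the $(d-1)$-dimensional minimiser (so to have $(d-1)+2$ facets by \cref{cor:more-dpluss}, or $(d-1)+3$ facets with the triplex/$J$ structure), Xue's estimate is tight only when the faces $F_i/v_i$ are themselves triplices, and patching these together shows $P$ must be $J(\ell+1,d)$ (or one of the listed alternates $A(d)$, $\Sigma(d)$, $C(d)$, all of which have exactly $d+3$ facets); in particular no polytope with $d+4$ or more facets can attain equality, which is the assertion of the theorem. I would close by remarking that combining \cref{thm:3minus1} with \cref{lem:2d+s-dplus3-simple-pyramid} and the examples of \cref{sec:additional-minimisers} yields the full resolution of \cref{conj:3minus1} stated in the introduction.
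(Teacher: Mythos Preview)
Your plan follows the paper's architecture fairly closely at the top level (induction on $d$, handle $\ell=1,2$ by citation, pyramid case first, then split by the size of a chosen facet), but there are two genuine gaps that would prevent Case B from going through as you describe it.

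\textbf{The shelling does not ``locate a good facet''.} You write that the Blind--Blind-style shelling ``is presumably used to locate a facet with enough but not too many vertices, or to bound the number of vertices outside a chosen facet''. That is not what it does. In the paper, once one is reduced to the situation where \emph{every} facet avoiding a distinguished vertex $v_1$ has at most $2d-2$ vertices, no single application of \cref{cor:number-faces-outside-facet-practical} can work: the complement of any such facet has $\ge d+\ell - (d-1) + 1$ vertices, which may well exceed $d+1$. The fix is to take a shelling $F_0,F_1,\dots,F_m$ of the \emph{antistar} of $v_1$, and apply Xue's proposition separately at each step $j$ to the ``new'' vertices $F_j\setminus(F_0\cup\cdots\cup F_{j-1})$. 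The crucial point is that each facet has at most $2(d-1)$ vertices while meeting its predecessors in at least a ridge, so each step introduces at most $d-1$ new vertices---small enough for \cref{cor:number-faces-outside-facet-practical} to apply. Summing over the shelling gives the missing contribution. Your description (pick one small $F^\ast$, enumerate the outside) simply runs into the $r\le d+1$ wall and cannot be repaired without this iterative idea.

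\textbf{The distinguished vertex $v_1$ is not arbitrary.} You do not specify how to choose the vertex around which the argument is organised. In the paper one first invokes \cref{lem:2d+s-dplus3-simple-pyramid} to dispose of multifold pyramids over simple polytopes, so that $P$ contains some vertex that is both nonpyramidal and nonsimple; then $v_1$ is chosen to be one of \emph{maximum degree} among these. This maximality is used repeatedly: when the facet $F$ avoiding $v_1$ has $d+1$ subfacets and is itself a pyramid (so its apex has degree $\ge 2d+\ell-r$ in $P$), maximality forces $\deg_P(v_1)\ge 2d+\ell-r$ as well, which is exactly the leverage needed to bound $f_{k-1}(P/v_1)$ via \cref{lem:lower-bound-dplus2-facets} or the induction hypothesis. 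Without this choice, your ``large $s_i$'s'' in Case B are unavailable and the estimate falls short. A related omission is the separate treatment of a facet with exactly $f_0(P)-2$ vertices (the paper's second claim), where the two outside vertices are handled by an ad hoc argument rather than the generic machinery.

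Your equality discussion is in the right spirit---trace tightness back through every inequality---but be aware that the paper carries this out case by case across all four claims, and several subcases (e.g.\ when $F$ has $d+1$ subfacets and is simple, or when $F/v_1$ has $d+1$ versus $d+2$ facets) require individual attention before one can conclude that $P$ has exactly $d+3$ facets.
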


We prove the inequality by induction on $d\ge 3$ for all $\ell\ge 1$  and address the equality afterwards.  The case $\ell=1$ was settled independently in \cite{PinYos22} and \cite{Xue24}, while the case $\ell=2$  was proved in \cite{PinTriYos24}. Thus we assume $\ell\ge 3$. 

For \(d = 3\), we have  $f_{2}\ge 6$. By Euler's formula and \eqref{eq:3minus1-function-extension},   
\begin{equation*}
f_{1}=f_{0}+f_{2}-2\ge 6+ \ell + 6-2=\ell +10>12=\eta_{1}(3d-1,3)=\eta_{1}(6+\ell,3).  
\end{equation*}

For  $d=4$, since $\ell\ge 3$,  
\begin{equation*}
f_1\ge 2f_0=2(2d+\ell)=16+2\ell\ge 22=\eta_1(3d-1,4)=\eta_1(8+\ell,4),
\end{equation*}
where equality holds if and only if $P$ is a simple 4-polytope with 11 vertices.

According to \cref{thm:projections-d4-f0f2},  $f_2\ge \frac{1}{2}\left(2f_{0}+3+\sqrt{8f_{0}+9}\right)$. Since $f_0=8+\ell\ge 11$, we have 
 \begin{equation*}
     f_2\ge\frac{1}{2}\left(2f_{0}+3+\sqrt{8f_{0}+9}\right)\ge  18= \eta_2(11,4)=\eta_2(8+\ell,4),
 \end{equation*} with equality if and only if  $P$ is a 4-polytope with 11 vertices. In that case, since $f_1(P)\ge 22$, $f_3(P)\ge 7$, and $f_0(P)=11$, Euler's relation yields 
\begin{equation*}
   f_2(P)=f_1(P)+f_3(P)-f_0(P)\ge 29-11=18, 
\end{equation*}
so equality occurs  if and only if $P$ is a simple 4-polytope with 11 vertices; specifically $J(4,4)$, the polytope obtained by truncating a simple vertex in a simplicial $4$-prism.

Consider the pair $(d,\ell)$ with $d\ge 5$ and $\ell\ge 3$ and assume the inequality holds for all pairs $(d',\ell')$ with  $3\le d'<d$ and  $\ell\ge 1$. The proof is divided into four claims, organised according to the number of vertices of a selected facet.

\subsection{Claim \ref{cl:3minus1-pyramid}: The pyramid case}

\begin{claim} If $P$ is a pyramid, then $f_{k}(P)\ge \eta_{k}(2d+\ell,d)$ for each $k\in [1\ldots d-2]$.
\label{cl:3minus1-pyramid}
\end{claim}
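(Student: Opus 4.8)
\textbf{Proof plan for Claim \ref{cl:3minus1-pyramid} (the pyramid case).}

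The plan is to reduce to the inductive hypothesis for dimension $d-1$. Suppose $P$ is a pyramid over a $(d-1)$-polytope $F$; then $f_0(F)=f_0(P)-1=2d+\ell-1=2(d-1)+(\ell+1)$, and $F$ has at least $(d+3)-1=d+2$ facets (since the facets of $P$ are the apex-pyramids over the facets of $F$, together with $F$ itself, and a pyramid over $F$ has exactly $f_{d-2}(F)+1$ facets). First I would split according to whether $F$ has exactly $d+2$ facets or at least $d+3$. If $F$ has at least $d+3$ facets, the induction hypothesis of \cref{thm:3minus1} applies directly to $F$ with $\ell'=\ell+1$ (note $\ell'\ge 4\ge 1$, and $\ell'$ may exceed $(d-1)-1$, which is fine because of the extension \eqref{eq:3minus1-function-extension}): we get $f_k(F)\ge \eta_k(2(d-1)+\ell+1,d-1)$ for $k\in[1\ldots d-3]$, and also $f_{d-2}(F)\ge d+2$ trivially since $F$ has at least $d+3$ facets (so $f_{d-2}(F)\ge d+2$). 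If $F$ has exactly $d+2$ facets, I would instead invoke \cref{cor:more-dpluss} (the variant of Xue's theorem for polytopes with at least $d+s$ vertices): with $s=d-1\wedge\dots$—more carefully, $F$ has $2(d-1)+(\ell+1)$ vertices, so writing this as $(d-1)+s$ with $s=d+\ell\ge d-1$, we cap $s$ at $d-1$ and obtain $f_k(F)\ge\theta_k(2(d-1),d-1)=\theta_k(d-1+(d-1),d-1)$. In this subcase I would then need the pointwise comparison $\theta_k(2(d-1),d-1)\ge$ (whatever is needed to make the pyramid formula work out), which is a routine combinatorial inequality I would record as a lemma of the $\tau/\eta/\theta$-comparison type already used throughout Section 2–4.

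Next, using the pyramid face formula $f_k(P)=f_k(F)+f_{k-1}(F)$ for $k\in[1\ldots d-1]$ (with the convention $f_{-1}(F)=1$), I would assemble the bound. For $k\in[2\ldots d-2]$, in the ``$F$ has $\ge d+3$ facets'' subcase, $f_k(P)=f_k(F)+f_{k-1}(F)\ge \eta_k(2(d-1)+\ell+1,d-1)+\eta_{k-1}(2(d-1)+\ell+1,d-1)=\eta_k(2d+\ell+1,d)\ge \eta_k(2d+\ell,d)$, exactly as in the analogous passages of the proof of \cref{lem:2d+s-dplus3-simple-pyramid}(ii); the identity $\eta_k(n-1,d-1)+\eta_{k-1}(n-1,d-1)=\eta_k(n,d)$ (a Pascal-type recursion for $\eta$) is what makes this telescope, and I would state it as part of \cref{lem:combinatorial-identities}. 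For $k=1$, I would use $f_1(P)=f_1(F)+f_0(F)\ge \eta_1(2(d-1)+\ell+1,d-1)+(2d+\ell-1)$ and check directly that this is $\ge\eta_1(2d+\ell,d)$ by a one-line computation with binomial coefficients. The ``$F$ has exactly $d+2$ facets'' subcase is handled the same way but starting from the $\theta$-bound; here the key point is that a $(d-1)$-polytope with $d+2$ facets and $2(d-1)+(\ell+1)$ vertices is very constrained (by \cref{lem:dplus2facets} it is a multifold pyramid over $T(m)\times T(b-m)$), so either its $\theta$-bound already exceeds $\eta_k(2d+\ell,d)-\eta_{k-1}(\cdots)$ pointwise, or one can appeal to \cref{cor:more-dpluss} together with the fact that $\eta_k(2d+\ell,d)$ is by construction the face count of a $d+3$-facet polytope with fewer faces than any $d+2$-facet competitor in this range (which is essentially \cref{lem:lower-bound-dplus2-facets} combined with \cref{rmk:functions-dplus2-facets}).

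I expect the main obstacle to be bookkeeping rather than a deep idea: making sure the index ranges match up (the induction on $d$ gives information only up to $k\le d-3$ for $F$, so the top case $k=d-2$ of $P$ needs $f_{d-3}(F)$ and $f_{d-2}(F)$, and the latter is just ``number of facets of $F$'', which is $\ge d+2$ or $\ge d+3$ depending on the subcase — I must confirm $\eta_{d-2}(2d+\ell,d)$ does not exceed what this gives). A secondary subtlety is the case where $\ell+1$ pushes us into the ``$\ell\ge d-1$'' regime for $F$, where $\eta_k(2(d-1)+\ell+1,d-1)$ should be read via the extension \eqref{eq:3minus1-function-extension} as $\eta_k(3(d-1)-1,d-1)$; I would handle this exactly as the corresponding $\ell\ge d-1$ passage in the proof of \cref{lem:2d+s-dplus3-simple-pyramid}(ii), where it was shown that the resulting bound is in fact strictly larger than $\eta_k(3d-1,d)=\eta_k(2d+\ell,d)$. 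No equality analysis is needed here, since the claim only asserts the inequality; the equality characterisation for pyramids is subsumed by \cref{lem:2d+s-dplus3-simple-pyramid} and will be invoked later.
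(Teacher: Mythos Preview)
Your approach is essentially the paper's: use $f_k(P)=f_k(F)+f_{k-1}(F)$ and apply the inductive hypothesis to the base $F$, then telescope via the Pascal recursion for $\eta$, treating $k=1$ and the range $\ell\ge d-1$ separately. The one confusion is your case split on whether $F$ has exactly $d+2$ or at least $d+3$ facets. You have forgotten the dimension shift: in dimension $d-1$, the hypothesis of \cref{thm:3minus1} asks for at least $(d-1)+3=d+2$ facets, and $F$ already satisfies this. So the induction hypothesis applies to $F$ unconditionally, and the paper simply writes $f_k(F)\ge\eta_k(2(d-1)+\ell+1,d-1)$ with no split. Your proposed detour through \cref{cor:more-dpluss} and \cref{lem:dplus2facets} for the ``exactly $d+2$ facets'' subcase is therefore unnecessary (and note that such an $F$ would have $(d-1)+3$ facets, not $(d-1)+2$, so \cref{lem:dplus2facets} would not even apply). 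With that subcase deleted, your plan coincides with the paper's proof.
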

\begin{claimproof} The base $F$ of $P$ has   $2(d-1)+\ell+1$ vertices and at least $d+2$ facets. By the induction hypothesis, $f_{k}(F)\ge \eta_{k}(2(d-1)+\ell+1,d-1)$ for $\ell\ge 1$ and $k\in [1\ldots d-3]$. We remark that $f_{d-2}(F)\ge \eta_{d-2}(2(d-1)+\ell+1,d-1)=d+2$.  Thus, for each $k\in [2\ldots d-2]$ and $1\le \ell\le d-2$, we find 
\begin{equation}\label{eq:claim1_smallell_fk}
  \begin{aligned}
    f_{k}(P)&=f_{k}(F)+f_{k-1}(F)\\
            &\ge \eta_{k}(2(d-1)+\ell+1,d-1)+\eta_{k-1}(2(d-1)+\ell+1,d-1)=\eta_{k}(2d+\ell+1,d)\\
            &\ge \eta_{k}(2d+\ell,d). 
  \end{aligned}
\end{equation}

For $k=1$, we have 
\begin{equation}\label{eq:claim1_smallell_f1}
  \begin{aligned}
    f_{1}(P)&=f_{1}(F)+f_{0}(F)\\
      &\ge \eta_{1}(2(d-1)+\ell+1,d-1)+2(d-1)+\ell+1 =\eta_{1}(2d+\ell+1,d)-1 \; \ge \eta_{1}(2d+\ell,d).
  \end{aligned}
\end{equation}
Equality  holds for $k=1$ if and only if $\ell=d-2$, $f_0(F)=3d-3$, and $f_1(F)=\eta_1(3d-4, d-1)$ by \eqref{eq:3minus1-function-extension}. However, $2f_1(F)<(d-1)f_0(F)$:
\begin{align*}
 2f_1(F)=2\eta_1(3d-4, d-1)=3d^2-7d+4<3d^2-6d+3=(d-1)f_0(F),
\end{align*}
which is a contradiction.

Suppose $\ell\ge d-1$. In this case, for $k\in [1\ldots d-3]$ the induction hypothesis yields $f_{k}(F)\ge \eta_{k}(2(d-1)+d-2,d-1)$,  as per \eqref{eq:3minus1-function-extension}. Then, for $k\in [2\ldots d-2]$, we obtain
\begin{equation}\label{eq:claim1_largeell_fk}
    \begin{aligned}
     f_{k}(P)&=f_{k}(F)+f_{k-1}(F)\\
     &\ge \eta_{k}(2(d-1)+d-2,d-1)+\eta_{k-1}(2(d-1)+d-2,d-1) = \eta_{k}(3d-2,d)\\
     &=\eta_{k}(3d-1,d).
  \end{aligned} 
\end{equation}

Additionally, for $k=1$, 
\begin{equation}\label{eq:claim1_largeell_f1}
    \begin{aligned}
     f_{1}(P)&=f_{1}(F)+f_{0}(F)\ge \eta_1(2(d-1)+d-2, d-1)+3d-2 =\eta_{1}(3d-1,d).
    \end{aligned}
\end{equation}
Equality  holds for $k=1$ if and only if $\ell=d-1$, $f_0(F)=3d-2$, and $f_1(F)=\eta_1(3d-4, d-1)$. However, $2f_1(F)<(d-1)f_0(F)$:
\begin{align*}
 2f_1(F)=2\eta_1(3d-4, d-1)=3d^2-7d+4<3d^2-5d+2=(d-1)f_0(F),
\end{align*}
which is a contradiction. This concludes the proof of the claim.
\end{claimproof}



We now record a useful observation, which is a special case of \cref{rmk:dplus2facets-facets}.
\begin{remark}
  \label{rmk:simple-dplus1-facet-faces} Let  $2\le m\le \floor{(d-1)/2}$, and let $F$ be a simple $(d-1)$-polytope of the form $T(m)\times T(d-1-m)$.  By  \cref{rmk:dplus2facets-facets},  the following statements hold for $d\ge 5$.
  \begin{enumerate}[{\rm (i)}]
  \item A $(d-2)$-face $R$ of $F$ is of the form either $T(m-1)\times T(d-1-m)$ with $m(d-m)$ vertices or $T(m)\times T(d-2-m)$ with $(m+1)(d-1-m)$ vertices. Since $2\le m\le \floor{(d-1)/2}$,  $f_0(R)\ge 2d-4$ in both cases. 
  
 \item A $(d-3)$-face $K$ of $F$ is of the form $T(m-2)\times T(d-1-m)$  with $(m-1)(d-m)$ vertices, $T(m-1)\times T(d-2-m)$ with $m(d-1-m)$ vertices, or $T(m)\times T(d-3-m)$ with $(m+1)(d-2-m)$ vertices. For $m\ge3$, we have $f_0(K)\ge 2d-6$ in each case. When $m=2$, we still have $f_0(K)\ge d-2$. 
\end{enumerate}   
\end{remark}

\subsection{Claim  \ref{cl:3minus1-minus-2}: A facet with $f_{0}(P)-2$ vertices}
We begin with an overview of the proof. Let $F$ be a facet with $f_{0}(P)-2$ vertices, and let $X:=\set{v_1,v_2}$ be the vertices outside $F$. We distinguish two cases according to the number of facets of $F$.
\begin{enumerate}   
\item \textbf{\cref{case:3minus1-minus-2-dplus2}: $F$ has at least $d+2$ facets}. We argue as in \cref{cl:3minus1-pyramid}, applying the induction hypothesis together with \cref{lem:facet-k-faces} to get the result. 
\item  \textbf{\cref{case:3minus1-minus-2-dplus1} :  $F$ has exactly $d+1$ facets}. We further split into two subcases according to \cref{lem:dplus2facets}. 

 We use \cref{cor:number-faces-outside-facet-practical} to find  faces $F_1,F_2$ in $P$ such that each $F_i$ has dimension $d-i+1$, contains $v_i\in X$, and excludes any $v_j\in X$ with $j<i$. Note that $F_1=P$. Hence the number of $k$-faces of $P$ that contain at least one vertex in $X$ is bounded below by 
\begin{equation}
\label{eq:claim2-faces-in-X}
    \sum_{i=1}^2 f_{k-1}(F_{i}/v_{i}).
\end{equation}
Moreover,  each vertex in $X$ is the apex of a pyramid over a $(d-2)$-face of $F$.
\begin{enumerate}
\item \emph{Subcase~2.\ref{subcase:3minus1-minus-2-dplus1-nonsimple}: $F$ is a nonsimple polytope}. By \cref{lem:lower-bound-dplus2-facets}, $f_k(F)\ge \tau_{k}(2(d-1)+\ell,d-1)$, and if $\ell\ge d-4$ then $f_{k}(F)\ge \tau_{k}(3(d-1)-3,d-1)$ by \cref{rmk:functions-dplus2-facets}. Since $F$ is a pyramid over a $(d-2)$-face $R$ (\cref{lem:dplus2facets}), the other facet  containing $R$ must contain either $v_{1}$ or $v_{2}$, say $v_{1}$. Thus the apex $v_{1}$ has degree at least $2(d-1)+\ell$ in $P$. 
    
    The next step is to bound $f_{k-1}(P/v_1)$.   If $P/v_1$ has at least $d+2$ facets, we apply the induction hypothesis; if it has exactly $d+1$ facets,  we use \cref{lem:lower-bound-dplus2-facets} or \cref{rmk:functions-dplus2-facets}. Together with \eqref{eq:claim2-faces-in-X}, this settles the subcase. 

\item \emph{Subcase~2.\ref{subcase:3minus1-minus-2-dplus1-simple}: $F$ is a simple polytope}. Here $f_{k}(F)\ge \rho_{k}(d-1,m,d-1)$ by \cref{lem:dplus2facets}.  By analysing the $(d-2)$-faces of $F$ (see   \cref{rmk:simple-dplus1-facet-faces}(i)), we conclude that both vertices in $X$ has degree  at least $2d-4$ in some facet of $P$ and at least $2d-3$ in $P$. Thus,  by \eqref{eq:claim2-faces-in-X} and \cref{cor:more-dpluss}, the number of $k$-faces containing vertices of $X$ satisfies
\begin{equation*}
    \sum_{i=1}^2 f_{k-1}(F_{i}/v_{i})\ge \theta_{k-1}(2(d-1)-1,d-1)+\theta_{k-1}(2(d-2),d-2).
\end{equation*}
These elements settle the subcase. 
\end{enumerate}
\end{enumerate}  
 We now proceed with the details.


\begin{claim} If $P$ has a facet $F$ with $f_{0}(P)-2$ vertices, then $f_{k}(P)\ge \eta_{k}(2d+\ell,d)$ for each $k\in [1\ldots d-2]$.
\label{cl:3minus1-minus-2}
\end{claim}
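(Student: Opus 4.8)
The claim asserts that if a $d$-polytope $P$ with $2d+\ell$ vertices and at least $d+3$ facets has a facet $F$ carrying $f_0(P)-2$ vertices (so exactly two vertices $X=\{v_1,v_2\}$ lie outside $F$), then $f_k(P)\ge\eta_k(2d+\ell,d)$ for each $k\in[1\ldots d-2]$. I will follow the two-case structure laid out in the overview, organised by the number of facets of $F$, and I will lean heavily on the induction hypothesis (the inequality of \cref{thm:3minus1} in dimension $d-1$), on \cref{lem:facet-k-faces}, on the $d+2$-facet lower bounds (\cref{lem:lower-bound-dplus2-facets}, \cref{lem:lower-bound-dplus2-facets-extra}, \cref{rmk:functions-dplus2-facets}, \cref{lem:dplus2facets}), and on the counting machinery of \cref{cor:number-faces-outside-facet-practical} together with \cref{cor:more-dpluss}. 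Throughout, $F$ has dimension $d-1$ and $f_0(F)=2(d-1)+\ell$ vertices, so the natural parameter for $F$ is still $\ell$.

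\textbf{Case 1: $F$ has at least $d+2$ facets.} Here I would argue exactly as in the pyramid case \cref{cl:3minus1-pyramid}. If $F$ has at least $d+3$ facets, the induction hypothesis gives $f_k(F)\ge\eta_k(2(d-1)+\ell,d-1)$ for $k\in[1\ldots d-3]$ (and $f_{d-2}(F)\ge d+2$ always); if $F$ has exactly $d+2$ facets, \cref{lem:lower-bound-dplus2-facets} and \cref{rmk:functions-dplus2-facets} give the analogous bound $f_k(F)\ge\tau_k(2(d-1)+\ell,d-1)\ge\eta_k(2(d-1)+\ell,d-1)$ after verifying $\tau_k\ge\eta_k$ in the relevant range (this comparison is the content of \cref{lem:combinatorial-ineq}-type estimates, which I would invoke). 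Then \cref{lem:facet-k-faces} yields $f_k(P)\ge f_k(F)+f_{k-1}(F)\ge\eta_k(2(d-1)+\ell,d-1)+\eta_{k-1}(2(d-1)+\ell,d-1)=\eta_k(2d+\ell,d)$ for $k\ge 2$ by the Pascal-type recursion for $\eta$, while for $k=1$ the extra vertex outside $F$ forces $f_1(P)\ge f_1(F)+f_0(F)+1$, which suffices; one must note the bookkeeping that $\ell$ is the same for $F$ and $P$ here because $f_0(F)=f_0(P)-2=2(d-1)+\ell$, so no shift occurs and the bound matches on the nose.

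\textbf{Case 2: $F$ has exactly $d+1$ facets.} By \cref{lem:dplus2facets} applied in dimension $d-1$, $F$ is a $((d-1)-a)$-fold pyramid over $T(m)\times T(a-m)$ for some $2\le a\le d-1$, $1\le m\le\lfloor a/2\rfloor$. I use \cref{cor:number-faces-outside-facet-practical} on the sequence $(v_1,v_2)$ to get faces $F_1=P$ and $F_2$ (of dimension $d-1$) with $v_i\in F_i$ and $v_1\notin F_2$, so the number of $k$-faces of $P$ meeting $X$ is at least $f_{k-1}(P/v_1)+f_{k-1}(F_2/v_2)$; combining with $f_k(P)\ge f_k(F)+f_{k-1}(F)+\big(\text{faces meeting }X\text{ not counted in }F\big)$ requires some care about double counting, but the clean inequality to use is $f_k(P)\ge f_k(F)+f_{k-1}(P/v_1)+f_{k-1}(F_2/v_2)$ once one checks the three families are disjoint. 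In Subcase 2a ($F$ nonsimple, i.e.\ $m\ge 2$), \cref{lem:lower-bound-dplus2-facets}/\cref{rmk:functions-dplus2-facets} bound $f_k(F)$, and because $F$ is a pyramid over a $(d-2)$-face $R$, the second facet of $P$ through $R$ contains one of $v_1,v_2$, forcing that apex to have degree $\ge 2(d-1)+\ell$ in $P$; then $P/v_1$ is a $(d-1)$-polytope with $\ge 2(d-1)+\ell$ vertices, and I bound $f_{k-1}(P/v_1)$ by the induction hypothesis if it has $\ge d+2$ facets and by \cref{lem:lower-bound-dplus2-facets} otherwise. In Subcase 2b ($F$ simple, $m=1$), \cref{lem:dplus2facets} gives $f_k(F)=\rho_k(d-1,m,d-1)$ exactly, and \cref{rmk:simple-dplus1-facet-faces}(i) shows every $(d-2)$-face of $F$ has $\ge 2d-4$ vertices, so both $v_1,v_2$ have degree $\ge 2d-3$ in $P$ (and $\ge 2d-4$ in a facet), whence $\sum f_{k-1}(F_i/v_i)\ge\theta_{k-1}(2(d-1)-1,d-1)+\theta_{k-1}(2(d-2),d-2)$ by \cref{cor:more-dpluss}; adding $f_k(F)$ and simplifying gives the bound.

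\textbf{Anticipated main obstacle.} The genuine difficulty is Subcase 2a: after reducing to $P/v_1$, its facet count is not under direct control, and when $P/v_1$ has only $d+1$ facets one gets a $\tau$-type bound whose sum with $f_k(F)$ (also a $\tau$-bound) must be shown to dominate $\eta_k(2d+\ell,d)$ for the full range $3\le\ell\le d-1$ and $1\le k\le d-2$—this is a delicate binomial inequality, sensitive to the floor in the parameter $a=\lfloor(d+\ell+1)/2\rfloor+1$, and is where the argument from \cite[Lem.~7]{PinTriYos24} together with the combinatorial lemmas (\cref{lem:combinatorial-ineq}, \cref{lem:combinatorial-identities}) must be pushed through carefully. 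A secondary subtlety is ensuring the three face-families (faces inside $F$, faces through $v_1$, faces through $v_2$ avoiding $v_1$) are counted without overlap, which relies on the structure of $F_2$ from \cref{cor:number-faces-outside-facet-practical}(i) and the fact that $v_1,v_2$ are the only vertices outside $F$.
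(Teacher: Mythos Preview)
Your plan has the same two-case skeleton as the paper, and Case~1 works once you correct an indexing slip: since $F$ is $(d-1)$-dimensional, the induction hypothesis applies as soon as $F$ has $(d-1)+3=d+2$ facets, so the \emph{entire} Case~1 is handled by IH and \cref{lem:facet-k-faces}. Your further split at ``$\ge d+3$ vs.\ $=d+2$'' and the claim $\tau_k\ge\eta_k$ are both wrong (in general $\tau_k<\eta_k$---this is exactly why \cref{conj:3minus1} separates the $d+2$-facet polytopes), but fortunately that detour is unnecessary.

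The genuine gap is your subcase split in Case~2. You write ``nonsimple, i.e.\ $m\ge2$'' and ``simple, $m=1$'', but this is not the correct dichotomy. From $f_0(F)=d+m(a-m)=2(d-1)+\ell\ge 2d+1$ one gets $m(a-m)\ge d+1$, which together with $a\le d-1$ forces $m\ge2$ in \emph{every} instance of Case~2. The right split is whether $a<d-1$ (so $F$ is a proper pyramid, hence nonsimple) or $a=d-1$ (so $F=T(m)\times T(d-1-m)$ is simple); in both branches $2\le m\le\lfloor a/2\rfloor$. Under your labelling, Subcase~2b is vacuous and your Subcase~2a argument would be applied to the simple case as well---but there it breaks, because the crucial step ``$F$ is a pyramid over a $(d-2)$-face $R$, so the other facet through $R$ contains $v_1$ or $v_2$, giving $\deg_P(v_1)\ge 2(d-1)+\ell$'' requires $F$ to be a pyramid. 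Conversely, the argument you sketch for Subcase~2b (invoking \cref{rmk:simple-dplus1-facet-faces}(i) to get $(d-2)$-faces with $\ge 2d-4$ vertices) is precisely what is needed for the simple case with $m\ge2$; note that \cref{rmk:simple-dplus1-facet-faces} explicitly assumes $m\ge2$ and is false for $m=1$ (a simplicial prism has simplex ridges with only $d-1$ vertices). So you have the right two arguments, but attached to the wrong criteria; once you relabel by $a<d-1$ versus $a=d-1$, the paper's proof goes through, with the delicate $\tau$-vs-$\eta$ binomial comparisons in the nonsimple branch handled by \cref{lem:combinatorial-ineq}(ii)--(iv).
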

\begin{claimproof} By \cref{cl:3minus1-pyramid}, we may assume that $P$ is not a pyramid. We distinguish two cases according to the number of $(d-2)$-faces in $F$.

\case The facet $F$ has at least $d+2=(d-1)+3$ facets.
\label{case:3minus1-minus-2-dplus2}

For $k\in[2\ldots d-2]$ and $\ell\ge 3$, the induction hypothesis on $d-1$ and  \cref{lem:facet-k-faces} give 
\begin{equation}\label{eq:claim2_case1_fk}
    \begin{aligned}
     f_{k}(P)&\ge f_{k}(F)+f_{k-1}(F)\\
     &\ge \eta_k(2(d-1)+\ell,d-1)+\eta_{k-1}(2(d-1)+\ell,d-1) \\
     &\ge\brac*{\binom{d}{k+1}+2\binom{d-1}{k+1}-\binom{d-1-\ell}{k+1}} +\brac*{\binom{d}{k}+2\binom{d-1}{k}-\binom{d-1-\ell}{k}}\\
     &=\binom{d+1}{k+1}+2\binom{d}{k+1}-\binom{d-\ell}{k+1}= \eta_k(2d+\ell,d).
    \end{aligned}
\end{equation}
Additionally, since the two vertices outside $F$ are adjacent, 
\begin{equation}\label{eq:claim2_case1_f1}
    \begin{aligned}
    f_{1}(P)&\ge f_{1}(F)+f_{0}(F)+1\\
    &\ge \eta_{1}(2(d-1)+\ell,d-1)+2(d-1)+\ell+1=\eta_{1}(2d+\ell,d).
    \end{aligned}
\end{equation}
 \case The facet $F$ has $d+1$ facets.
 \label{case:3minus1-minus-2-dplus1} 
 \begin{subcases}

 Then $F$ is  a $(d-1-a)$-fold pyramid over $T(m)\times T(a-m)$   for some $2\le a\le d-1$ and $2\le m\le \floor{a/2}$ (\cref{lem:dplus2facets}). Let $v_{1}$ and $v_{2}$ be the two vertices outside $F$. 
 By \cref{cor:number-faces-outside-facet-practical}, there exist faces $F_1, F_2$ in  $P$ of dimension \(d\) and \(d - 1\) respectively, such that $F_i$ contains $v_i$ but does not contain any $v_j$ with $j<i$.  The number of $k$-faces of $P$ that contain at least one of $\set{v_{1},v_{2}}$ is bounded below by $f_{k-1}(F_{1}/v_{1})+f_{k-1}(F_{2}/v_{2})$. 
 
 \subcase \emph{The facet $F$ is a nonsimple polytope.} 
 \label{subcase:3minus1-minus-2-dplus1-nonsimple} 

 By \cref{lem:lower-bound-dplus2-facets}, $f_k(F)\ge \tau_{k}(2(d-1)+\ell,d-1)$ for $k\in [1\ldots d-2]$.  If $\ell\ge d-4$, then, by \cref{rmk:functions-dplus2-facets}, we use $f_{k}(F)\ge \tau_{k}(3(d-1)-3,d-1)$. Since $F$ is a pyramid over a $(d-2)$-face $R$, the other facet $F'$ containing $R$ must contain either $v_{1}$ or $v_{2}$, say $v_{1}$, or else $P$ would be a pyramid over $F'$. Since $f_{0}(F')= 2(d-1)+\ell$, its apex $v_{1}$ has degree at least $2(d-1)+\ell$ in $P$.

Any facet of $P$ containing $v_2$ but not $v_1$ (for instance $F_2$) must be a pyramid over a $(d-2)$-face of $F$, and by \cref{rmk:dplus2facets-facets}, since $m\ge 2$, this $(d-2)$-face  has at least $d+1$ vertices. Thus, \(\deg_{F_2}(v_2) \ge d + 1\), and by \cref{cor:more-dpluss}, we obtain 
\begin{equation}
\label{eq:claim2_subcase2.1-F2}
    f_{k-1}(F_{2}/v_{2})\ge \theta_{k-1}(d+1,d-2).
\end{equation} 

For $k=1$, we find  
\begin{equation}
\label{eq:claim2_subcase2.1_f1}
    \begin{aligned}
        f_{1}(P)&\ge f_{1}(F)+\sum_{i=1}^{2}f_{0}(F_{i}/v_{i})\\
&\ge \tau_{1}(2(d-1)+\ell,d-1)+2(d-1)+\ell+d+1,
    \end{aligned}
\end{equation}
and \cref{lem:combinatorial-ineq}(iii) ensures  $f_{1}(P)> \eta_{1}(2d+\ell,d)$.

For $k\in [2\ldots d-2]$, two possibilities arise depending on the number of $(d-2)$-faces in $F_{1}/v_{1}$.

\textbf{Suppose that the number of $(d-2)$-faces in $F_{1}/v_{1}$ is at least $(d-1)+3$}. For $k\in [2\ldots d-2]$ and $\ell\ge 1$,  the induction hypothesis for $d-1$ and \cref{lem:lower-bound-dplus2-facets}  yield 
\begin{equation} 
\label{eq:claim2_subcase2.1-dplus2}
\begin{aligned}
f_{k}(P)&\ge f_{k}(F)+\sum_{i=1}^{2}f_{k-1}(F_{i}/v_{i})\\
&\ge \tau_{k}(2(d-1)+\ell,d-1)+\eta_{k-1}(2(d-1)+\ell,d-1)+\theta_{k-1}(d+1,d-2)\\
&= \tau_{k}(2(d-1)+\ell,d-1)+\eta_{k-1}(2(d-1)+\ell,d-1)+\binom{d-1}{k}+\binom{d-2}{k}-\binom{d-4}{k}\\
&> \eta_{k}(2d+\ell,d).
\end{aligned}
\end{equation} 
 If $\ell\ge d-2$  then $f_{k-1}(F_1/v_1)\ge \eta_{k-1}(2(d-1)+d-2,d-1)$ (see \eqref{eq:3minus1-function-extension}). In any case, \cref{lem:combinatorial-ineq}(ii) yields $f_{k}(P)> \eta_{k}(2d+\ell,d)$.

\textbf{Suppose the number of $(d-2)$-faces in $F_{1}/v_{1}$ is $(d-1)+2$}. Then \cref{lem:lower-bound-dplus2-facets} yields 
\begin{equation} 
\label{eq:claim2_subcase2.1-dplus1}
\begin{aligned}
f_{k}(P)&\ge f_{k}(F)+\sum_{i=1}^{2}f_{k-1}(F_{i}/v_{i})\\
&\ge \tau_{k}(2(d-1)+\ell,d-1)+\tau_{k-1}(2(d-1)+\ell,d-1)+\theta_{k-1}(d+1,d-2).
\\
&> \eta_{k}(2d+\ell,d).
\end{aligned}
\end{equation} 
If $\ell\ge d-4$ then $f_{k}(F_1/v_1)\ge \tau_{k}(3(d-1)-3,d-1)$ (\cref{rmk:functions-dplus2-facets}). \cref{lem:combinatorial-ineq}(iv) gives that $f_{k}(P)> \eta_{k}(2d+\ell,d)$ for $k\in [2\ldots d-2]$.


\subcase \emph{The facet $F$ is a simple polytope.} 
 \label{subcase:3minus1-minus-2-dplus1-simple}
 
Here, $F=T(m)\times T(d-1-m)$ where  $2\le m\le \floor{(d-1)/2}$ (\cref{lem:dplus2facets}), and $f_0(F)=d+m(d-1-m)\ge 3d-6$. So $f_0(P)=d+m(d-1-m)+2\ge 3d-4$, implying $\ell\ge d-4$. The number of $k$-faces in $F$ is  $\rho_{k}(d-1,m,d-1)$ by \cref{lem:dplus2facets}. 

 A facet $F_{2}$ containing $v_{2}$ but not $v_{1}$ must be a pyramid over a $(d-2)$-face $R$ of $F$. Then $F_2$ has $d+1$ $(d-2)$-faces. By   \cref{rmk:simple-dplus1-facet-faces}(i), each such face $R$ has at least $2d-4$ vertices for $d\ge 4$, implying $\deg_{F_{2}}(v_{2})\ge 2d-4$ and $\deg_{P}(v_{2})\ge 2d-3$. The same reasoning yields   $\deg_{P}(v_{1})\ge 2d-3$. Consequently, the $k$-faces of $P$ include  at least $\theta_{k-1}(2(d-2),d-2)$ $k$-faces containing $v_{2}$ inside $F_{2}$  by \cref{cor:more-dpluss}, plus at least $\theta_{k-1}(2(d-1)-1,d-1)$ $k$-faces  containing $v_{1}$ in $P$   by \cref{cor:more-dpluss}, and $\rho_{k}(d-1,m,d-1)$ $k$-faces in $F$. 

Assume $m=2$. Then $F=T(2)\times T(d-3)$, implying $f_0(F)=3(d-1)-3$ and $f_0(P)=3d-4$. 
\begin{equation}
\label{eq:claim2_subcase2.2-m2}
    \begin{aligned}
f_{k}(P)&\ge f_{k}(F)+\theta_{k-1}(2(d-1)-1,d-1)+\theta_{k-1}(2(d-2),d-2)\\
  &\ge \brac*{\binom{d}{k+1}+\binom{d-1}{k+1}+\binom{d-2}{k+1}-\binom{3}{k+2}}+\brac*{\binom{d}{k}+\binom{d-1}{k}-\binom{2}{k}}\\
  &\quad +\brac*{\binom{d-1}{k}+\binom{d-2}{k}-\binom{1}{k}}\\
  &=\binom{d+1}{k+1}+2\binom{d}{k+1}-\binom{3}{k+2}-\binom{2}{k}-\binom{1}{k} \\
  &=\eta_{k}(2d+d-4,d)+\binom{4}{k+1}-\binom{3}{k+2}-\binom{2}{k}-\binom{1}{k}\ge\eta_{k}(3d-4,d).
    \end{aligned}
\end{equation}
Equality holds for $k\in [1\ldots d-2]$ if and only if $k\ge 4$.

Assume $m\ge 3$. Then  $d\ge 7$, and by \cref{lem:dplus2-vertices-inequalities}, we have $\rho_{k}(d-1,m,d-1)\ge \rho_{k}(d-1,3,d-1)$. Since either $f_0(R)=m(d-m)\ge 2d-2$ or $f_0(R)=(m+1)(d-1-m)\ge 2d-2$, it follows that $\deg_{F_{2}}(v_{2})\ge 2d-2$ and $\deg_{P}(v_{1})\ge 2d-1$. By \cref{thm:2dplus1}, since $F_2$ has $d+1$ $(d-2)$-faces, we obtain 
\begin{equation*}
    f_{k-1}(F_2/v_2)\ge f_{k-1}((T_{2}^{d-2,d-2-(\floor{(d-2)/2}+2)})^{*})>\theta_{k-1}(2(d-2),d-2).
\end{equation*}
Furthermore,  if $P/v_1$ has $d+1$ or at least $d+2$ facets, then by \cref{thm:2dplus1}
\begin{equation*}
    f_{k-1}(P/v_1)\ge f_{k-1}((T_{2}^{d-1,d-1-(\floor{(d-1)/2}+2)})^{*})\; \text{or}\; f_{k-1}(P/v_1)\ge f_{k-1}(J(2,d-1)),
\end{equation*} respectively. Recall that the $(s,d-s)$-triplex is the $(d-s)$-fold pyramid over $T(1)\times T(s-1)$,  $J(2,d-1)$ is the polytope obtained by truncating  a simple vertex  in a $(2,d-3)$-triplex, and $(T_{2}^{d-1,d-1-(\floor{(d-1)/2}+2)})^*$ is the $(d-3-\floor{(d-1)/2})$-fold pyramid over $T(2)\times T(\floor{(d-1)/2})$. In both cases,  $f_{k-1}(P/v_1)>\theta_{k-1}(2(d-1),d-1)$. Thus, for $\ell\ge d-4$, 
\begin{equation}\label{eq:claim2_subcase2.2_mge3}
  \begin{aligned}
      f_{k}(P) &\ge  \rho_{k}(d-1,3,d-1)+f_{k-1}(P/v_1)+f_{k-1}(F_2/v_2)\\
      &> \rho_{k}(d-1,3,d-1)+\theta_{k-1}(2(d-1),d-1)+\theta_{k-1}(2(d-2),d-2)\\
      & = \binom{d+1}{k+1}+2\binom{d}{k+1}+\binom{d-3}{k+1}-\binom{4}{k+2}-2\binom{1}{k} \\
      & = \eta_k(2d+\ell,d)+\binom{d-\ell}{k+1}+\binom{d-3}{k+1}-\binom{4}{k+2}-2\binom{1}{k} \ge \eta_k(2d+\ell,d).
  \end{aligned}    
\end{equation}
Hence $f_k(P)>\eta_k(2d+\ell,d)$.
This completes the proof of the subcase, and  hence the proof of the claim. 
\end{subcases}
\end{claimproof} 

\subsection{Claim  \ref{cl:3minus1-more-2dplus1}: A facet with between $2d-1$ and $f_{0}(P)-3$ vertices}
We begin with an overview of the proof. By \cref{lem:2d+s-dplus3-simple-pyramid}, we may assume that $P$ contains a vertex that is both nonpyramidal and nonsimple; let  $v_1$ be one such vertex of maximum degree. Let $F$ be a facet of $P$ not containing $v_1$. By  \cref{cl:3minus1-pyramid,cl:3minus1-minus-2}, we may assume that $F$ has at most $f_{0}(P)-3$ vertices.   \cref{cl:3minus1-more-2dplus1} addresses the case $f_0(F)\ge 2d-1$, while  \cref{cl:3minus1-less-2dplus1} covers  $f_0(F)\le 2d-2$. If $f_0(F)\ge 2d-1$, then $f_k(F)$ is bounded by the induction hypothesis,  by \cref{lem:lower-bound-dplus2-facets} in combination with \cref{rmk:functions-dplus2-facets}, or by \cref{lem:dplus2facets}. If instead $f_0(F)\le 2d-2$, then $f_k(F)$ is bounded by \cref{thm:at-most-2d} or \cref{thm:at-most-2d-refined-short}. These different estimates for $f_k(F)$ lead to \cref{cl:3minus1-more-2dplus1,cl:3minus1-less-2dplus1}.  

Write $f_0(F)=2d+\ell-r$, where $3\le r\le \ell+1$, and let $X:=\set{v_1,\ldots,v_r}$ be the set of vertices outside $F$. We use \cref{cor:number-faces-outside-facet-practical} to find  faces $F_1,\ldots,F_r$ in $P$ such that each $F_i$ has dimension $d-i+1$, contains $v_i\in X$, and excludes any $v_j\in X$ with $j<i$. Observe that $F_1=P$. Thus, the number of $k$-faces of $P$ that contain at least one vertex in $X$ is bounded below by 
\begin{equation}
\label{eq:faces-in-X}
    \sum_{i=1}^r f_{k-1}(F_{i}/v_{i}).
\end{equation}
 Two cases arise, according to the number of facets of $F$.

\begin{enumerate}   
\item \textbf{\cref{case:3minus1-minus-r-dplus2}: $F$ has at least $d+2$ facets}.  We apply the induction hypothesis to bound $f_k(F)$. Since $v_1$ is nonsimple in $P$, we use $\theta_{k-1}(d+1,d-1)$ (see \cref{cor:more-dpluss}) to bound $f_{k-1}(P/v_1)$. Together with \eqref{eq:faces-in-X}, these bounds yield the result.
\item \textbf{\cref{case:3minus1-minus-r-dplus1}: $F$ has exactly $d+1$ facets}. We split further  according to \cref{lem:dplus2facets}.
\begin{enumerate}
    \item \emph{Subcase~2.\ref{subcase:3minus1-minus-r-dplus1-nonsimple}: $F$ is a nonsimple polytope}. In this subcase $F$ is a pyramid, whose apex has degree at least $2d+\ell-r$ in $P$ (\cref{lem:dplus2facets}). Moreover, $f_k(F)\ge \tau_{k}(2(d-1)+\ell-r+2,d-1)$ by \cref{lem:lower-bound-dplus2-facets}, and if $\ell-r+2\ge d-4$ then $f_{k}(F)\ge \tau_{k}(3(d-1)-3,d-1)$ by \cref{rmk:functions-dplus2-facets}. 
    
    To bound $f_{k-1}(P/v_1)$, note that by the maximality of $v_1$,  $f_0(P/v_1)=\deg_P(v_1) \ge 2(d-1) + \ell - r+2$.   If $P/v_1$ has at least $d+2$ facets, we apply the induction hypothesis; if it has exactly $d+1$ facets,  we use \cref{lem:lower-bound-dplus2-facets} or \cref{rmk:functions-dplus2-facets}. Together with \eqref{eq:faces-in-X}, these elements settle the subcase. 

\item     \emph{Subcase~2.\ref{subcase:3minus1-minus-r-dplus1-simple}: $F$ is a simple polytope}. Here $f_{k}(F) =\rho_{k}(d-1,m,d-1)$ by \cref{lem:dplus2facets}. The difficulty lies in bounding $f_{k-1}(P/v_1)$, for which we only have $\theta_{k-1}(d+1,d-1)$. The bounds for $f_k(F)$ and $f_{k-1}(P/v_1)$ together with  \eqref{eq:faces-in-X}  are insufficient to get the desired inequality. Thus, we refine the argument by applying \cref{cor:number-faces-outside-facet-practical} to vertices inside a facet $J_1\ne F$ and to vertices outside $F\cup J_1$, thereby replacing \eqref{eq:faces-in-X} and  $\theta_{k-1}(d+1,d-1)$ in the computation. For this approach to yield a larger lower bound  than \eqref{eq:faces-in-X}, we need at least two vertices outside $F\cup J_1$.
\begin{enumerate}
    \item  \textbf{Suppose some facet $J_1$ has at least two vertices outside $F\cup J_1$.}  Let $J_{1}\cap X=\set{v'_{1},\ldots,v'_{t}}$, where $1\le t\le \card X-2=r-2$.  By \cref{cor:number-faces-outside-facet-practical}, there exist faces $J_1,J_2\ldots, J_{t}$ in $J_{1}$ such that each $J_i$ has dimension $d-1-i+1$,  contains $v'_i$, and excludes any $v'_j$ with $j<i$. Thus the number of $k$-faces of $J_{1}$ containing at least one vertex in $J_{1}\cap X$ is bounded below by $  \sum_{i=1}^{t}f_{k-1}(J_{i}/v'_{i})$.
    
  Applying \cref{cor:number-faces-outside-facet-practical} again to  $X\setminus J_1=\set{v_1'',\dots, v_{r-t}''}$, we obtain faces $F_1',\ldots, F'_{r-t}$ in $P$ such that each $F_i'$ has dimension $d-i+1$, contains $v''_i$, and excludes any $v''_j$ with $j<i$.  Hence the number of $k$-faces of $P$ containing at least one vertex in $X\setminus J_1$ satisfies $  \sum_{i=1}^{r-t}f_{k-1}(F_{i}'/v''_{i})\ge \sum_{j=1}^{r-t}\binom{d+1-j}{k}$. As a result, \eqref{eq:faces-in-X} and $\theta_{k-1}(d+1,d-1)$ are replaced by 
  \begin{equation}
      \label{eq:faces-in-X-J1}
\sum_{i=1}^{t} f_{k-1}(J_{i}/v'_{i})+\sum_{j=1}^{r-t}\binom{d+1-j}{k}.
\end{equation}
Together with the bound for $f_k(F)$, the inequality $f_k(P)\ge \eta_{k}(2d+\ell,d)$ follows by running through the possible pairs $(r,t)$ in \eqref{eq:faces-in-X-J1}.

    \item \textbf{Suppose no facet $J_1$ has  two or more vertices outside $F\cup J_1$.} Since $P$ has at least $d+3$ facets while $F$ has only $d+1$, there must exist a facet $U_1$ intersecting $F$ in a face of dimension at most $d-3$. We first show that $U_1$ is a two-fold pyramid over $F\cap U_1$. 
    
    If $r\ge 4$, then we use $U_1$ to show that $\deg_P(v_1)\ge d+2$. Combined with the bounds for $f_k(F)$, \eqref{eq:faces-in-X}, and $f_{k-1}(P/v_1)\ge \theta_{k-1}(d+2,d-1)$ from \cref{cor:more-dpluss}, this gives the desired inequality.

    If $r=3$, the inequality is obtained by combining the bound for $f_k(F)$, \eqref{eq:faces-in-X}, and a detailed analysis of the structure of $F=T(m)\times T(d-1-m)$ and its possible $(d-2)$-faces.
\end{enumerate}
\end{enumerate}
\end{enumerate}   

Now we proceed with the details.

\begin{claim} Let $P$ be a $d$-polytope that contains a vertex that is neither pyramidal nor simple. Let $v_1$ be  such a vertex  in $P$ with the maximum degree, and let $F$ be a facet  that does not contain $v_1$ and has $2d+\ell-r$ vertices, where $3\le r\le \ell+1$. Then, for each $k\in [1\ldots d-2]$, it follows that $f_k(P)\ge \eta_k(2d+\ell,d)$. 
  \label{cl:3minus1-more-2dplus1}
\end{claim}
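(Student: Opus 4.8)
The plan is to follow the roadmap sketched above, stratifying the argument by the number of facets of $F$ and, when $F$ has only $d+1$ facets, by whether $F$ is simple. Write $f_0(F)=2d+\ell-r$ with $3\le r\le \ell+1$, let $X=\{v_1,\dots,v_r\}$ be the vertices of $P$ outside $F$, and use \cref{cor:number-faces-outside-facet-practical} to produce faces $F_1=P,\,F_2,\dots,F_r$ with $\dim F_i=d-i+1$, $v_i\in F_i$, and $v_j\notin F_i$ for $j<i$. Since a $k$-face of $P$ all of whose vertices lie in $F$ is itself a face of $F$, every $k$-face not contained in $F$ must meet $X$, so
\[
f_k(P)\;\ge\; f_k(F)+\sum_{i=1}^{r} f_{k-1}(F_i/v_i).
\]
The whole proof consists of feeding good lower bounds for $f_k(F)$ and for the vertex figures $F_i/v_i$ into this inequality and then verifying a numerical inequality. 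For $f_k(F)$ the bound comes from the induction hypothesis when $F$ has at least $d+2$ facets, from \cref{lem:lower-bound-dplus2-facets} (sharpened via \cref{rmk:functions-dplus2-facets} when $F$ has many vertices) when $F$ is a nonsimple polytope with $d+1$ facets, and from \cref{lem:dplus2facets}, giving $f_k(F)=\rho_k(d-1,m,d-1)$, when $F$ is simple. For the vertex figures the generic estimate is $f_{k-1}(F_i/v_i)\ge \theta_{k-1}(d-i+1,d-i)=\binom{d+1-i}{k}$ from \cref{cor:more-dpluss}, but we can improve the $i=1$ term: since $v_1$ is nonsimple we always have $f_{k-1}(P/v_1)\ge\theta_{k-1}(d+1,d-1)$, and by the maximality of $\deg v_1$ we have $f_0(P/v_1)=\deg_P v_1\ge f_0(F')$ for every facet $F'$ avoiding $v_1$.

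When $F$ has at least $d+2$ facets, the induction hypothesis on $f_k(F)$, the bound $f_{k-1}(P/v_1)\ge\theta_{k-1}(d+1,d-1)$, and $\sum_{i\ge2}\binom{d+1-i}{k}$ combine to give $f_k(P)\ge\eta_k(2d+\ell,d)$, the remaining step being an instance of \cref{lem:combinatorial-ineq}. When $F$ has $d+1$ facets and is nonsimple, \cref{lem:dplus2facets} shows $F$ is a pyramid over a $(d-2)$-face $R$; the second facet through $R$ must contain a vertex of $X$ (else $P$ is a pyramid, excluded by \cref{cl:3minus1-pyramid}), which forces $\deg_P v_1\ge f_0(F)=2d+\ell-r$. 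Hence $f_0(P/v_1)$ is large, and I bound $f_{k-1}(P/v_1)$ by the induction hypothesis if $P/v_1$ has at least $d+2$ facets and by \cref{lem:lower-bound-dplus2-facets}/\cref{rmk:functions-dplus2-facets} if it has exactly $d+1$; in every branch one lands on a combinatorial inequality from \cref{lem:combinatorial-ineq}.

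The delicate case is $F=T(m)\times T(d-1-m)$ simple with $2\le m\le\floor{(d-1)/2}$: here we only have the weak bound $f_{k-1}(P/v_1)\ge\theta_{k-1}(d+1,d-1)$, and together with $f_k(F)=\rho_k(d-1,m,d-1)$ and $\sum_{i\ge2}\binom{d+1-i}{k}$ this is not enough. The remedy is to count more carefully using a second facet. If some facet $J_1\ne F$ has at least two vertices of $X$ lying outside $F\cup J_1$, apply \cref{cor:number-faces-outside-facet-practical} once \emph{inside} $J_1$ to the vertices of $X\cap J_1$ and once in $P$ to the vertices of $X\setminus J_1$; this replaces $\sum_{i\ge2}\binom{d+1-i}{k}$ and the $\theta_{k-1}(d+1,d-1)$ term by $\sum_{i}f_{k-1}(J_i/v_i')+\sum_j\binom{d+1-j}{k}$, and one finishes by running over the admissible pairs $(r,t)$. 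If no facet has two or more vertices outside its union with $F$, then since $P$ has at least $d+3$ facets while $F$ has only $d+1$, there is a facet $U_1$ meeting $F$ in a face of dimension at most $d-3$; I first check that $U_1$ is a two-fold pyramid over $F\cap U_1$. For $r\ge 4$ this pyramidal structure forces $\deg_P v_1\ge d+2$, hence $f_{k-1}(P/v_1)\ge\theta_{k-1}(d+2,d-1)$, which closes the estimate; for $r=3$ one must instead exploit the explicit list of $(d-2)$-faces of $T(m)\times T(d-1-m)$ recorded in \cref{rmk:simple-dplus1-facet-faces} to locate enough $k$-faces through the vertices of $X$.

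I expect the main obstacle to be precisely this last situation, $F$ simple with $r=3$: the available bounds are tightest there, $P$ is closest to an extremal configuration, and one cannot afford the slack lost by using $\theta_{k-1}(d+1,d-1)$ for $P/v_1$. It requires a hands-on analysis of how the three vertices of $X$ sit relative to the facets of $P$ and of which $(d-2)$-faces of the simple polytope $F$ can serve as ridges—essentially a small-scale replay of the structure theory for polytopes with $d+2$ facets. Once the geometry is pinned down, each numerical inequality that arises should, after substituting the closed forms for $\eta_k$, $\tau_k$, $\rho_k$ and $\theta_k$, reduce to an instance of \cref{lem:combinatorial-ineq}, so I would organise the write-up so that every branch of the case analysis is funnelled into those lemmas.
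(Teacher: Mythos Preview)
Your plan matches the paper's proof essentially step for step: the same stratification by the number of facets of $F$, the same use of \cref{cor:number-faces-outside-facet-practical} and the induction hypothesis in Case~1, the same apex argument in the nonsimple Subcase~2.1, the same $J_1$-trick and two-fold pyramid analysis in the simple Subcase~2.2, and the same $r\ge4$ versus $r=3$ split with a hands-on analysis of the ridges of $T(m)\times T(d-1-m)$ in the latter.  One small slip to correct when you write it up: in the nonsimple subcase you justify $\deg_P v_1\ge 2d+\ell-r$ via ``the second facet through $R$ must contain a vertex of $X$'', but with $r\ge3$ that facet need not be a pyramid over $R$, so this does not immediately yield a high-degree vertex in $X$; the paper instead notes that the \emph{apex of $F$} (a vertex of $F$, not of $X$) already has degree $\ge f_0(F)=2d+\ell-r$ in $P$, is nonpyramidal (as $P$ is not a pyramid) and nonsimple, and then invokes the maximality of $\deg v_1$.
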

\begin{claimproof}
By  \cref{cl:3minus1-pyramid,cl:3minus1-minus-2}, we assume that $P$ is not a pyramid and has no facet with $f_{0}(P)-2$ vertices.  We consider two cases, based on the number of $(d-2)$-faces in $F$. Let $X:=\set{v_1,\ldots,v_r}$ be the set of vertices outside $F$. By \cref{cor:number-faces-outside-facet-practical}, there are faces $F_1,\ldots,F_r$ in $P$ such that each $F_i$ has dimension $d-i+1$, contains $v_i$, and excludes any $v_j$ with $j<i$. The number of $k$-faces of $P$ that contain at least one vertex in $X$ is bounded below by $\sum_{i=1}^r f_{k-1}(F_{i}/v_{i})$.
\case The facet $F$ has at least $d+2=(d-1)+3$ facets.
\label{case:3minus1-minus-r-dplus2}

Since $\deg_P(v_1)\ge d+1$,  we obtain $f_{k-1}(P/v_1)\ge \theta_{k-1}(d+1,d-1)$ and   
\begin{equation}
  \label{eq:3minus1-minus-r-dplus2-2}
  \begin{aligned}  
f_k(P)&\ge f_k(F)+\sum_{i=1}^{r}f_{k-1}(F_i/v_i)\\
&\ge \eta_k(2(d-1)+\ell-r+2,d-1)+\theta_{k-1}(d+1,d-1)+\sum_{i=1}^{r-1}\binom{d-i}{k}\\  
&= \brac*{\binom{d}{k+1}+2\binom{d-1}{k+1}-\binom{d-1-(\ell-r+2)}{k+1}}+\binom{d-2}{k-1}+\sum_{i=1}^{r}\binom{d+1-i}{k}\\
&=\binom{d+1}{k+1}+2\binom{d}{k+1}-\binom{d-1-(\ell-r+2)}{k+1}+\sum_{i=1}^{r-3}\binom{d-2-i}{k}\\
&=\eta_k(2d+\ell,d) +\brac*{ \binom{d-\ell}{k+1} -\binom{d-1-(\ell-r+2)}{k+1}}+\sum_{i=1}^{r-3}\binom{d-2-i}{k}\\
&=\eta_k(2d+\ell,d) - \sum_{i=1}^{r-3} \binom{d-\ell+r-3-i}{k}+\sum_{i=1}^{r-3}\binom{d-2-i}{k}.
\end{aligned}
\end{equation}

If $\ell-r+2\ge d-2$, we use $\eta_k(2(d-1)+d-2,d-1)$ in $\eta_k(2(d-1)+\ell-r+2,d-1)$ (see \eqref{eq:3minus1-function-extension}).  Since $r\le \ell+1$, we have $d-2\ge d-\ell+r-3$, and thus the last line in \eqref{eq:3minus1-minus-r-dplus2-2} is at least $\eta_k(2d+\ell,d)$. More precisely, equality holds for every $k$ if and only if $r=3$ or $r=\ell+1$; otherwise (that is, $4\le r\le \ell$) equality holds exactly when $k=d-2$.

\case The facet $F$ has precisely  $d+1=(d-1)+2$ facets.
\label{case:3minus1-minus-r-dplus1}

Then $F$ is a $(d-1-a)$-fold pyramid over $T(m)\times T(a-m)$  for some $2\le a\le d-1$ and $2\le m\le \floor{a/2}$ (\cref{lem:dplus2facets}).  We consider two subcases according to the number of $(d-2)$-faces in $F$. 
 
\begin{subcases}
 \subcase \emph{The facet $F$ is a nonsimple polytope.} 
 \label{subcase:3minus1-minus-r-dplus1-nonsimple} 

\cref{lem:lower-bound-dplus2-facets} gives $f_k(F)\ge \tau_{k}(2(d-1)+\ell-r+2,d-1)$ for $k\in [1\ldots d-2]$.  If $\ell-r+2\ge d-4$, then, by \cref{rmk:functions-dplus2-facets}, $f_{k}(F)\ge \tau_{k}(3(d-1)-3,d-1)$.
Since $F$ is a pyramid, its  apex has degree $2d+\ell-r$ in $P$ and is nonpyramidal in $P$ (as $P$ is not a pyramid). Hence, by the maximality of $v_1$, $\deg_P(v_1) \ge 2d + \ell - r$. 

For $k=1$, we have
\begin{align*}
f_{1}(P)&\ge f_{1}(F)+\sum_{i=1}^{r}f_{0}(F_{i}/v_{i})\\
&\ge \tau_{1}(2(d-1)+\ell-r+2,d-1)+2(d-1)+\ell-r+2+\sum_{i=1}^{r-1}d-i.
\end{align*}
Then \cref{lem:combinatorial-ineq}(v) gives that 
\begin{equation} 
\label{eq:3minus1-minus-r-dplus1-subcase2.1-k1}
f_{1}(P) > \eta_{1}(2d+\ell,d).
\end{equation}

Assume $k\in [2\ldots d-2]$. We distinguish  two scenarios based on the number of $(d-2)$-faces in $F_1/v_1=P/v_1$.
 
 \textbf{Suppose the number of $(d-2)$-faces in $F_1/v_1$ is at least $d+2$.} The  induction hypothesis for $F_1/v_1$ and all $\ell\ge 3$ gives that
 \begin{align*}
  f_k(P)&\ge f_k(F)+\sum_{i=1}^{r}f_{k-1}(F_i/v_i)\\
  &\ge f_k(F)+\eta_{k-1}(2(d-1)+\ell-r+2,d-1)+\sum_{i=2}^{r}f_{k-1}(F_i/v_i)\\
  &\ge \tau_{k}(2(d-1)+\ell-r+2,d-1)+\eta_{k-1}(2(d-1)+\ell-r+2,d-1)+\sum_{i=1}^{r-1}\binom{d-i}{k}.
 \end{align*}
If $\ell-r+2\ge d-2$, we use $f_{k-1}(F_1/v_1)\ge \eta_{k-1}(2(d-1)+d-2,d-1)$. In either case, \cref{lem:combinatorial-ineq}(vi) gives 
\begin{equation} 
\label{eq:3minus1-minus-r-dplus1-subcase2.1-dplus2}
f_k(P)> \eta_k(2d+\ell,d).
\end{equation}  

\textbf{Suppose  the number of $(d-2)$-faces in $F_1/v_1$ is exactly $d+1$.} \cref{lem:lower-bound-dplus2-facets} gives that 

\begin{align*}
  f_k(P)&\ge f_k(F)+\sum_{i=1}^{r}f_{k-1}(F_i/v_i)\\
  &\ge f_k(F)+\tau_{k-1}(2(d-1)+\ell-r+2,d-1)+\sum_{i=2}^{r}f_{k-1}(F_i/v_i)\\
  &\ge \tau_{k}(2(d-1)+\ell-r+2,d-1)+\tau_{k-1}(2(d-1)+\ell-r+2,d-1)+\sum_{i=1}^{r-1}\binom{d-i}{k}.
 \end{align*}
 If $\ell-r+2\ge d-4$, then, thanks to \cref{rmk:functions-dplus2-facets}, we use $f_{k}(F)\ge \tau_{k}(3(d-1)-3,d-1)$ and $f_{k}(F_1/v_1)\ge \tau_{k-1}(3(d-1)-3,d-1)$.  \cref{lem:combinatorial-ineq}(vii) ensures that
 \begin{equation} 
   \label{eq:3minus1-minus-r-dplus1-subcase2.1-dplus1}
   f_k(P) > \eta_k(2d+\ell,d).
 \end{equation}  
 \subcase \emph{The facet $F$ is a simple polytope.} 
 \label{subcase:3minus1-minus-r-dplus1-simple} 

Then $F=T(m)\times T(d-1-m)$  for   $2\le m\le \floor{(d-1)/2}$ (\cref{lem:dplus2facets}). Hence, $f_0(F)=d+m(d-1-m)\ge 3d-6$, so $f_0(P)=d+m(d-1-m)+r\ge 3d-3$, implying $\ell\ge d-3$. The number of $k$-faces in $F$ is exactly $\rho_{k}(d-1,m,d-1)$ by \cref{lem:dplus2facets}.

 \textbf{Suppose that there is a facet $J_1$ for which at least two vertices lie outside $F\cup J_1$.} Let $t$ be the number of vertices in $J_1\cap X$. Then $1\le t\le \card X-2=r-2$.  Suppose that $J_{1}\cap X=\set{v'_{1},\ldots,v'_{t}}$. By \cref{cor:number-faces-outside-facet-practical}, there are faces $J_1,J_2\ldots, J_{t}$ in $J_{1}$ such that each $J_i$ has dimension $d-1-i+1$ and contains $v'_i$ but does not contain any $v'_j$ with $j<i$.  The number of $k$-faces of $J_{1}$ containing at least one vertex in $\set{v'_{1},\ldots,v'_{t}}$ is bounded below by $\sum_{i=1}^{t}f_{k-1}(J_{i}/v'_{i})$. 
There are $\card X-t=r-t\ge 2$ vertices outside $F\cup J_{1}$, and so
\begin{equation}
\label{eq:2d+2-dplus3-2}
\begin{aligned}
f_{k}(P)&\ge f_{k}(F)+\sum_{i=1}^{t}f_{k-1}(J_{i}/v'_{i})+\sum_{j=1}^{r-t}\binom{d+1-j}{k}\\
&\ge \rho_{k}(d-1,m,d-1)+\sum_{i=1}^{t}f_{k-1}(J_{i}/v'_{i})+\sum_{j=1}^{r-t}\binom{d+1-j}{k}.
\end{aligned}
\end{equation}

\subsubsection*{Consider the case $2\le t\le r-2$} When $m\ge 3$, we have $d\ge 7$ and  $\rho_{k}(d-1,m,d-1)\ge  \rho_{k}(d-1,3,d-1)$ by \cref{lem:dplus2-vertices-inequalities}. Thus, for $\ell\ge d-3$ and $2\le t\le r-2$, \eqref{eq:2d+2-dplus3-2} becomes
\begin{equation}
\label{eq:claim3_subcase2.2-large-t-m3}
\begin{aligned}
  f_{k}(P)&\ge \rho_{k}(d-1,m,d-1)+\sum_{i=1}^{t}\binom{d-i}{k}+\sum_{j=1}^{r-t}\binom{d+1-j}{k}\\
  &\ge \brac*{\binom{d}{k+1}+\binom{d-1}{k+1}+\binom{d-2}{k+1}+\binom{d-3}{k+1}-\binom{4}{k+2}}+\sum_{i=1}^{t}\binom{d-i}{k}\\
  &\quad +\sum_{j=1}^{r-t}\binom{d+1-j}{k}\\
  &\ge \eta_{k}(2d+\ell,d)+\binom{d-\ell}{k+1}+\binom{d-3}{k+1}+\sum_{i=1}^{t-2}\binom{d-2-i}{k}+\sum_{j=1}^{r-t-2}\binom{d-1-j}{k}\\
  &\quad-\binom{4}{k+2}\\
  &\ge \eta_{k}(2d+\ell,d).
\end{aligned}
\end{equation}
Equality can occur for $k\in [1\ldots d-2]$ only when $k=d-3,d-2$. For $k=d-2$, equality arises only when $r-t=2$;  for $k=d-3$  only when $t=2$ and $r=4$.

When $m=2$, then $d\ge 5$ and $F=T(2)\times T(d-3)$,  so $f_0(F)=3(d-1)-3$ and $f_0(P)=3d-6+r$. Then, for $\ell= d-6+r\ge d-3$ and $2\le t\le r-2$, 
\begin{equation}
    \label{eq:claim3_subcase2.2-large-t-m2}
\begin{aligned}
  f_{k}(P)&\ge \rho_{k}(d-1,2,d-1)+\sum_{i=1}^{t}\binom{d-i}{k}+\sum_{j=1}^{r-t}\binom{d+1-j}{k}\\
  &= \brac*{\binom{d}{k+1}+\binom{d-1}{k+1}+\binom{d-2}{k+1}-\binom{3}{k+2}}+\sum_{i=1}^{t}\binom{d-i}{k}+\sum_{j=1}^{r-t}\binom{d+1-j}{k}\\
  &= \eta_{k}(3d-6+r,d)+\binom{6-r}{k+1}+\sum_{i=1}^{t-2}\binom{d-2-i}{k}+\sum_{j=1}^{r-t-2}\binom{d-1-j}{k}-\binom{3}{k+2}\\
  &\ge \eta_{k}(3d-6+r,d).
\end{aligned}
\end{equation}
The term $\binom{6-r}{k+1}-\binom{3}{k+2}\ge -1$, with equality only for $k=1$ and $r\ge 5$. If   $k=1$ and $r\ge 5$, then  the sums $\sum_{i=1}^{t-2}\binom{d-2-i}{k}$ and $\sum_{j=1}^{r-t-2}\binom{d-1-j}{k}$ cannot both be equal to zero. Thus  $f_k(P)\ge  \eta_{k}(3d-6+r,d)$.

Equality can occur for $k\in [1\ldots d-3]$ if and only if  $t=2$ and $r=4$. For $k=d-2$, equality arises only when $r-t=2$.

\subsubsection*{Now consider the case $t=1$} Then $J_1$ is a pyramid over a $(d-2)$-face $R$.  By    \cref{rmk:simple-dplus1-facet-faces}(i),  any $(d-2)$-face of $F$ has at least $2d-4$ for $d\ge 5$. Thus, the apex of $J_1$ has degree at least $2d-4$ in $J_1$, so $f_{k-1}(J_1/v_1')\ge \theta_{k-1}(2(d-2),d-2)$. Therefore, \eqref{eq:2d+2-dplus3-2} becomes 
\begin{equation}\label{eq:claim3_subcase2.2_2}
  f_{k}(P)\ge f_{k}(F)+\theta_{k-1}(2(d-2),d-2)+\sum_{j=1}^{r-1}\binom{d+1-j}{k}.
  \end{equation}
  
When $m\ge 3$, we again have $d\ge 7$ and $\rho_{k}(d-1,m,d-1)\ge  \rho_{k}(d-1,3,d-1)$. Thus, for $\ell\ge d-3$,  
  \begin{align*}
    f_{k}(P)&\ge \rho_{k}(d-1,3,d-1)+\theta_{k-1}(2(d-2),d-2)+\sum_{j=1}^{r-1}\binom{d+1-j}{k}\\
    &\ge \brac*{\binom{d}{k+1}+\binom{d-1}{k+1}+\binom{d-2}{k+1}+\binom{d-3}{k+1}-\binom{4}{k+2}}\\
    &\quad + \brac*{\binom{d-1}{k}+\binom{d-2}{k}-\binom{1}{k}}+\sum_{j=1}^{r-1}\binom{d+1-j}{k}\\
    &\ge \eta_{k}(2d+\ell,d)+\binom{d-\ell}{k+1}+\binom{d-3}{k+1}+\sum_{j=1}^{r-3}\binom{d-1-j}{k}-\binom{4}{k+2}-\binom{1}{k}\\
    &=\eta_{k}(2d+\ell,d)+\underbrace{\brac*{\binom{d-\ell}{k+1}+\binom{d-3}{k+1}-\binom{4}{k+2}-\binom{1}{k}}}_{\text{$\ge  0$}}+\sum_{j=1}^{r-3}\binom{d-1-j}{k}\\
    &\ge \eta_{k}(2d+\ell,d).
  \end{align*}

  Equality holds for $k\in [1\ldots d-2]$ if and only if  $r=3$ and $k=d-3,d-2$. 
  
Suppose $m=2$. Then $d\ge 5$ and $F=T(2)\times T(d-3)$, so $f_0(F)=3(d-1)-3$ and $f_0(P)=3d-6+r$. Thus,  for $\ell=d-6+r\ge d-3$,  
  \begin{align*}
    f_{k}(P)&\ge \rho_{k}(d-1,2,d-1)+\theta_{k-1}(2(d-2),d-2)+\sum_{j=1}^{r-1}\binom{d+1-j}{k}\\
    &= \brac*{\binom{d}{k+1}+\binom{d-1}{k+1}+\binom{d-2}{k+1}-\binom{3}{k+2}}+\brac*{\binom{d-1}{k}+\binom{d-2}{k}-\binom{1}{k}}\\
    &\quad +\sum_{j=1}^{r-1}\binom{d+1-j}{k}\\
    &= \eta_{k}(3d-6+r,d)+\binom{6-r}{k+1}+\sum_{j=1}^{r-3}\binom{d-1-j}{k}-\binom{3}{k+2}-\binom{1}{k}\\
    &\ge \eta_{k}(3d-6+r,d).
  \end{align*} 
When $r=3$, the term $\binom{6-r}{k+1}-\binom{3}{k+2}-\binom{1}{k}\ge 0$ for all $k\ge 1$. When $r\ge 4$, we have $\binom{6-r}{k+1}-\binom{3}{k+2}-\binom{1}{k}+\sum_{j=1}^{r-3}\binom{d-1-j}{k}\ge \binom{d-2}{k}-\binom{3}{k+2}-\binom{1}{k}>0$ for $k\ge 1$.

  Equality holds for $k\in [1\ldots d-2]$ if and only if  $r=3$ and $k\ge 3$. 

  \textbf{Suppose that there is no facet $J_1$ for which at least two vertices lie outside $F\cup J_1$.} Since $P$ has at least $d+3$ facets and $F$ has $d+1$, there must exist a facet $U_1$ of $P$ intersecting $F$ in a face of dimension at most $d-3$. Then $U_1$ contains either every vertex in $X$ or all except one. Thus, $f_0(U_1)\ge r-1+f_0(U_1\cap F)$. 

Suppose for contradiction that $U_1$ is disjoint from $F$.  Since every facet other than $F$ misses at most one vertex from $X$, the facet $U_1$ must be a pyramid over each of its $(d-2)$-faces; otherwise, another facet of $P$ containing a $(d-2)$-face of $U_1$ would miss at least two vertices from $X$.  By \cref{prop:polytope-pyramid-all-facets}, $U_1$ is a simplex in this case. However, one of the facets in $P$ containing a $(d-2)$-face of $F$, say $I$, must intersect $U_1$ in a  face of dimension at most $d-3$, since $F$ has $d+1$ $(d-2)$-faces while $U_1$ has only $d$. Then $I$ avoids at least two vertices from $X$, which is a contradiction. Hence, every  such facet $U_1$ of $P$ has a nonempty intersection with $F$.

It follows that $U_1$ has two $(d-2)$-faces  containing $F\cap U_1$. Each such $(d-2)$-face lies in a facet $J$ of $P$ other than $U_1$ and $F$, and such a facet cannot miss two vertices of $U_1 \setminus F$; hence $U_1 \setminus J$ has only one vertex. Thus $U_1$ is a two-fold pyramid. More generally, we have the following.

\begin{fact} Every facet of $P$ intersecting $F$ in a face of dimension at most $d-3$ is a  two-fold pyramid. 
 \label{fact:two-fold-facet}   
\end{fact}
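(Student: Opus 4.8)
The plan is to establish \cref{fact:two-fold-facet} by extending the argument just made for the specific facet $U_1$ to an arbitrary facet $G$ of $P$ with $\dim(F\cap G)\le d-3$. The setup I inherit is precisely the one needed: $F$ is a simple $(d-1)$-polytope with $d+1$ facets, equal to $T(m)\times T(d-1-m)$; $P$ has at least $d+3$ facets; and we are in the subcase where \emph{no} facet of $P$ has two or more vertices outside $F\cup J_1$, equivalently every facet of $P$ other than $F$ misses at most one vertex of $X=\set{v_1,\dots,v_r}$.

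First I would record the consequence of this subcase hypothesis that drives everything: if $G$ is any facet with $G\ne F$, then $\card{X\setminus G}\le 1$, so $G$ contains at least $r-1$ of the vertices outside $F$. Next, suppose $G$ meets $F$ in a face of dimension $\le d-3$. I would first rule out $F\cap G=\emptyset$ exactly as was done for $U_1$: if $G$ were disjoint from $F$, then every $(d-2)$-face of $G$ would lie in a second facet of $P$, and that second facet, being distinct from $F$, would have to contain all but at most one vertex of $X$ — but it would also miss the vertex of $X\cap G$ not on that $(d-2)$-face, together with the $\ge 1$ vertex of $X$ outside $G$, forcing it to miss two vertices of $X$ unless $G$ is a pyramid over each of its $(d-2)$-faces; by \cref{prop:polytope-pyramid-all-facets} that makes $G$ a simplex with only $d$ facets, and then one of the $d+1$ facets of $P$ through the $(d-2)$-faces of $F$ must meet $G$ in dimension $\le d-3$ and hence miss $\ge 2$ vertices of $X$, a contradiction. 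So $F\cap G\ne\emptyset$.

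Then, with $R:=F\cap G$ a face of $F$ of dimension between $0$ and $d-3$, I would count the $(d-2)$-faces of $G$ containing $R$: in any polytope a face of dimension $d-3$ lies in exactly two facets (here, $(d-2)$-faces of $G$), and a lower-dimensional $R$ lies in at least two. Each such $(d-2)$-face $R'\supseteq R$ of $G$ is a ridge of $P$, hence lies in a unique second facet $J_{R'}\ne G$; and $J_{R'}\ne F$ because $R'$ has dimension $d-2>\dim R=\dim(F\cap G)\ge\dim(F\cap J_{R'})$ only if\ldots — more carefully, if $J_{R'}=F$ then $R'\subseteq F\cap G=R$, impossible since $\dim R'>\dim R$. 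So $J_{R'}$ is a facet $\ne F$; by the subcase hypothesis it misses at most one vertex of $X$. Now $G$ contains $\ge r-1\ge 2$ vertices of $X$ (as $r\ge 3$), and I claim $G\setminus J_{R'}$ is nonempty for the right choice of counting: the facets $J_{R'}$ as $R'$ ranges over the $(d-2)$-faces of $G$ through $R$ are distinct, each missing $\le 1$ vertex of $X$; if there were $\ge 3$ such $(d-2)$-faces, two of the $J_{R'}$ together would already cover a contradiction, so there are exactly two, $R$ has dimension exactly $d-3$, $G$ has exactly two $(d-2)$-faces through $R$, and deleting the vertices of $G$ not on those two faces shows $G$ is a two-fold pyramid with apex pair the two vertices of $G\setminus R$ opposite those faces — equivalently $G$ is a two-fold pyramid over $R=F\cap G$.

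I expect the main obstacle to be making the bookkeeping on ``which facet misses which vertex of $X$'' fully rigorous — in particular pinning down that each $(d-2)$-face $R'$ of $G$ through $R$ forces its partner facet $J_{R'}$ to have exactly one vertex of $G\setminus R'$ absent, and chaining these to conclude $G$ is a \emph{two-fold} pyramid rather than merely a pyramid. The cleanest route is to argue that $R=F\cap G$ has dimension exactly $d-3$ (any smaller and $R$ lies in $\ge 3$ facets of $P$ besides $F$, each missing $\le 1$ vertex of $X$, which together with the vertices of $X$ outside $G$ overdetermines the missing-vertex budget once $r\ge 3$), that $G$ therefore has exactly two $(d-2)$-faces through $R$, and that the two vertices $w_1,w_2$ of $G$ not on these faces are precisely the two apices; each of $w_1,w_2$ lies in only one of the two $(d-2)$-faces-avoided configurations, and removing either still leaves a pyramid, which is the definition of a two-fold pyramid over $R$. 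This completes the proof of \cref{fact:two-fold-facet}.
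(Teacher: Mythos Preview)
Your overall approach matches the paper's: rule out $F\cap G=\emptyset$, then for each $(d-2)$-face $R'$ of $G$ containing $R=F\cap G$ pass to the second facet $J_{R'}$ of $P$ through $R'$, use the subcase hypothesis $\card{(X\setminus J_{R'})}\le 1$, and conclude that $G$ is a pyramid over $R'$; two such $R'$ then make $G$ a two-fold pyramid. You correctly isolate the key step (``each $(d-2)$-face $R'$ of $G$ through $R$ forces\ldots\ exactly one vertex of $G\setminus R'$ absent'').

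Where you derail is in insisting that $\dim R=d-3$ and that $G$ is a two-fold pyramid \emph{over $R=F\cap G$}. The Fact does not assert this, and it is false in general: immediately after the Fact, the paper shows for $r\ge 4$ that every such $G$ must have $\dim(F\cap G)\le d-4$, since $\dim(F\cap G)=d-3$ would give $\card{(G\cap X)}=2$ and hence $\card{(X\setminus G)}=r-2\ge 2$, contradicting the subcase hypothesis. Your proposed contradiction---``if there were $\ge 3$ such $(d-2)$-faces, two of the $J_{R'}$ together would already cover a contradiction''---does not work; nothing prevents several $J_{R'}$ from missing the same vertex of $X$, and having more $R'$ only upgrades $G$ to a higher-fold pyramid, which is still a two-fold pyramid.

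The clean argument (the paper's) is one line and avoids all of this: since $R\subseteq R'$ we have $G\setminus R'\subseteq G\setminus F\subseteq X$, and since $J_{R'}\cap G=R'$ we have $G\setminus R'=G\setminus J_{R'}\subseteq X\setminus J_{R'}$, which has at most one element. Hence $\card{(G\setminus R')}=1$ and $G$ is a pyramid over $R'$. As $\dim R\le d-3$, at least two such $(d-2)$-faces $R_1',R_2'$ exist, and being a pyramid over two distinct facets makes $G$ a two-fold pyramid over $R_1'\cap R_2'$ (which contains but need not equal $F\cap G$). Drop the $\dim R=d-3$ claim entirely.
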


\textbf{Suppose that $r\ge 4$}.  We claim that  $P$ contains a vertex of degree at least $d+2$.  If $F\cap U_1$ has dimension less than $d-3$, then  each vertex in $F\cap U_1$ has degree at least $d+2$ in the polytope. If instead $F\cap U_1$ is a $(d-3)$-face, then $U_1$ is a two-fold pyramid over $F\cap U_1$; but this is a contradiction, since $2=\card (U_1\setminus F)\ge r-1\ge 3$. Therefore,  $\dim (F\cap U_1)\le d-4$, and  every vertex of  $F\cap U_1$ has degree at least $d+2$ in $P$;  in particular, $\deg(v_1)\ge d+2$.




When $m\ge 3$, we have that $d\ge 7$ and $\rho_{k}(d-1,m,d-1)\ge  \rho_{k}(d-1,3,d-1)$ by \cref{lem:dplus2-vertices-inequalities}. In this case, for $\ell\ge d-3$, we find 
\begin{equation}
\label{eq:claim3_subcase2.nofacetJ1-r4}
\begin{aligned}
    f_{k}(P)&\ge \rho_{k}(d-1,3,d-1)+\theta_{k-1}(d+2,d-1)+\sum_{j=1}^{r-1}\binom{d-j}{k}\\
    &\ge \brac*{\binom{d}{k+1}+\binom{d-1}{k+1}+\binom{d-2}{k+1}+\binom{d-3}{k+1}-\binom{4}{k+2}}\\
    &\quad + \brac*{\binom{d}{k}+\binom{d-1}{k}-\binom{d-3}{k}}+\sum_{j=1}^{r-1}\binom{d-j}{k}\\
    &\ge \eta_{k}(2d+\ell,d)+\binom{d-\ell}{k+1}+\binom{d-3}{k+1}+\sum_{j=1}^{r-4}\binom{d-3-j}{k}-\binom{4}{k+2}\\
    &\ge \eta_{k}(2d+\ell,d).
  \end{aligned}
\end{equation}
  Equality holds for $k\in [1\ldots d-2]$ if and only if  $k=d-3,d-2$.
  
When $m=2$, we have $d\ge 5$ and $F=T(2)\times T(d-3)$, implying $f_0(F)=3(d-1)-3$ and $f_0(P)=3d-6+r$. In this case,  for $\ell=d-6+r\ge d-3$, we obtain 
\begin{equation}\label{eq:claim3_subcase2.2_3}
\begin{aligned}
  f_{k}(P)&\ge \rho_{k}(d-1,2,d-1)+\theta_{k-1}(d+2,d-1)+\sum_{j=1}^{r-1}\binom{d-j}{k}\\
  &= \brac*{\binom{d}{k+1}+\binom{d-1}{k+1}+\binom{d-2}{k+1}-\binom{3}{k+2}}+\brac*{\binom{d}{k}+\binom{d-1}{k}-\binom{d-3}{k}}\\
  &\quad +\sum_{j=1}^{r-1}\binom{d-j}{k}\\
  &= \eta_{k}(3d-6+r,d)+\binom{6-r}{k+1}+\sum_{j=1}^{r-4}\binom{d-3-j}{k}-\binom{3}{k+2}\\
  &=\eta_{k}(3d-6+r,d)+\underbrace{\brac*{\binom{6-r}{k+1}-\binom{3}{k+2}}}_{\text{$\ge  -1$}}+\underbrace{\brac*{\binom{d-3}{k+1}-\binom{d+1-r}{k+1}}}_{\text{$\ge 0$ as $r\ge 4$}}\\
  &\ge \eta_{k}(3d-6+r,d).
  \end{aligned}
\end{equation} 
  Equality holds for $k\in [1\ldots d-2]$ if and only if either $r=4$, or $r\ge 5$ and $k=d-3,d-2$. For $d=5$, there is an additional equality pair $(r,k)=(5,1)$.


Finally, \textbf{suppose that $r=3$}. Let  $I_1$ be a facet of $P$, other than $F$, that does not contain $v_1$. Then $I_1$ must contain the other two vertices $v_2$ and $v_3$ from $X$, and it must intersect $F$ in either a ridge of $P$ or a $(d-3)$-face. If $I_1\cap F$ is a ridge, then each of $v_2$ and $v_3$ serves as the apex of a pyramid over a $(d-3)$-face $K\subseteq I_1\cap F$. If instead  $I_1\cap F$ is a $(d-3)$-face $K$, then $I_1$ is a two-fold pyramid over $K$ (see \cref{fact:two-fold-facet}). 

If $m\ge 3$, then $d\ge 7$, and according to \cref{rmk:simple-dplus1-facet-faces}, $f_0(K)\ge 2d-6$. In this case, $v_2$ or $v_3$ has degree at least $2d-5$ in $I_1$, so $\deg_P(v_1)\ge 2d-4$, and the following computation applies:
\begin{equation}\label{eq:claim3_subcase2.2_4}
\begin{aligned}
 f_{k}(P)&\ge \rho_{k}(d-1,3,d-1)+\theta_{k-1}(2d-4,d-1)+\theta_{k-1}(2d-5,d-2)+\binom{d-2}{k}\\
  &\ge \brac*{\binom{d}{k+1}+\binom{d-1}{k+1}+\binom{d-2}{k+1}+\binom{d-3}{k+1}-\binom{4}{k+2}}\\
  &\quad + \brac*{\binom{d}{k}+\binom{d-1}{k}-\binom{3}{k}}+\brac*{\binom{d-1}{k}+\binom{d-2}{k}-\binom{2}{k}} +\binom{d-2}{k}\\
  &\ge \eta_{k}(2d+\ell,d)+\binom{d-\ell}{k+1}+\binom{d-3}{k+1}+\binom{d-2}{k}-\binom{4}{k+2}-\binom{3}{k}-\binom{2}{k}\\
  &>\eta_{k}(2d+\ell,d).    
\end{aligned} 
\end{equation}

Now suppose  $m=2$, and  let $X=\set{v_1',v_2', v_3'}$, where $v_1$ could be any of these vertices. Then $d\ge 5$ and $F=T(2)\times T(d-3)$, giving $f_0(F)=3d-6$, $f_0(P)=3d-3$, and $\ell=d-3$. The $d=5$ corresponds to $f_0(P)=2d+2$,  which  has been addressed elsewhere, so assume $d\ge 6$. For convenience, we list all the $(d-2)$-,  $(d-3)$-, and $(d-4)$-faces of $F$ (see \cref{rmk:simple-dplus1-facet-faces}).
\begin{fact}
\label{fact:faces-F}  
\begin{enumerate}[{\rm (i)}]
      \item A $(d-2)$-face of $F$ is  either $T(1)\times T(d-3)$ with $2(d-2)$ vertices or $T(2)\times T(d-4)$ with $3(d-3)$ vertices. 
  
 \item A $(d-3)$-face of $F$ is  $T(0)\times T(d-3)$  with $d-2$ vertices, $T(1)\times T(d-4)$ with $2(d-3)$ vertices, or $T(2)\times T(d-5)$ with $3(d-4)$ vertices.

 \item A $(d-4)$-face of $F$ is a $(d-4)$-simplex with $d-3$ vertices, $T(1)\times T(d-5)$ with $2(d-4)$ vertices, or $T(2)\times T(d-6)$ with $3(d-5)$ vertices.
\end{enumerate}
\end{fact}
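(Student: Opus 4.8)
The plan is to exploit the fact that, in this subcase, $F$ has already been identified as the Cartesian product $T(2)\times T(d-3)$ of two simplices. Faces of a Cartesian product $P\times Q$ are precisely the products $G\times H$ of a face $G$ of $P$ with a face $H$ of $Q$, with $\dim(G\times H)=\dim G+\dim H$ and $f_0(G\times H)=f_0(G)\cdot f_0(H)$; and the $j$-faces of the $n$-simplex $T(n)$ are exactly the $j$-simplices $T(j)$, each with $j+1$ vertices. Combining these two observations, I would describe every face of $F$ of codimension $c$ (in $F$) as a product $T(i)\times T(j)$ with $(2-i)+\bigl((d-3)-j\bigr)=c$, where $0\le i\le 2$ and $0\le j\le d-3$ so that the face is nonempty. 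Parts (i) and (ii) are then exactly \cref{rmk:simple-dplus1-facet-faces}(i),(ii) specialised to $m=2$; only part (iii) requires pushing the bookkeeping one further dimension.

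The computation itself is mechanical. For $c=1$ the only admissible splits of the codimension are $(i,j)=(1,d-3)$ and $(2,d-4)$, yielding $T(1)\times T(d-3)$ with $2(d-2)$ vertices and $T(2)\times T(d-4)$ with $3(d-3)$ vertices, which is (i). For $c=2$ the splits are $(0,d-3)$, $(1,d-4)$, $(2,d-5)$, yielding $T(0)\times T(d-3)$ with $d-2$ vertices, $T(1)\times T(d-4)$ with $2(d-3)$ vertices, and $T(2)\times T(d-5)$ with $3(d-4)$ vertices, which is (ii). For $c=3$, since the $T(2)$ factor can absorb at most two units of codimension and stay nonempty, the splits are $(0,d-6)$, $(1,d-5)$, $(2,d-4)$, giving $T(2)\times T(d-6)$ with $3(d-5)$ vertices, $T(1)\times T(d-5)$ with $2(d-4)$ vertices, and $T(0)\times T(d-4)=T(d-4)$, a $(d-4)$-simplex with $d-3$ vertices; this is (iii).

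There is no real obstacle here: the only point needing a word is the low-dimensional degeneracy in (iii), where the type $T(2)\times T(d-6)$ presupposes $d\ge 6$. This is exactly the regime in which \cref{fact:faces-F} is invoked (the case $d=5$ having already been absorbed into the $2d+2$-vertex count treated earlier), and at $d=6$ the type collapses to $T(2)\times T(0)$, a triangle with $3=3(d-5)$ vertices, consistent with the stated formula. So the "hard part" is purely keeping the three codimension levels and their vertex products straight, which the product-of-simplices description renders routine.
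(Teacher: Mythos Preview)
Your approach is correct and is essentially what the paper does: it states this fact as a direct specialisation of \cref{rmk:simple-dplus1-facet-faces} (which is itself a specialisation of \cref{rmk:dplus2facets-facets}) to $m=2$, with part (iii) following by the same product-of-simplices bookkeeping one codimension further. One small slip to fix: in your $c=3$ paragraph the listed pairs $(0,d-6),(1,d-5),(2,d-4)$ are transposed relative to your convention $(i,j)\leftrightarrow T(i)\times T(j)$; the correct pairs are $(0,d-4),(1,d-5),(2,d-6)$, matching the face types you then (correctly) write out.
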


If all the vertices of $F$ are simple in $P$, then the number of edges between the vertices in $F$ and those in $X$ would be exactly $3(d-2)$, implying that $P$ is simple, contrary to our assumption. Hence,  $F$ must contain a nonsimple vertex $w$ in $P$.

Let $Y$ be a facet of $P$ that contains a $(d-2)$-face $T(2)\times T(d-4)$ $R$ of $F$ and precisely two vertices from $X$, say $v_1'$ and $v_2'$. According to \cref{rmk:simple-dplus1-facet-faces}, the $(d-3)$-faces within $R$ are either  $T(1)\times T(d-4)$ with $2(d-3)$ vertices or $T(2)\times T(d-5)$ with $3(d-4)$ vertices, where $3(d-4)\ge 2(d-3)$ for $d\ge 6$. By considering a $(d-3)$-face $K$ within $R$ that contains $v_1'$ and not $v_2'$, we get a $(d-2)$-face $R'$ of $Y$ that is a pyramid with apex $v_1'$ and base $K$. In the other facet $Z$ of $P$ that contains $R'$, we have that $v_2'\notin Z$ and $\deg_Z(v_1')\ge 2(d-3)+1$. The same reasoning gives that  $\deg_P(v_2')\ge 2(d-3)+2$. Consequently,  for $d\ge 6$, 
\begin{equation}
\label{eq:claim3_r3_m2}    
\begin{aligned}
  f_{k}(P)&\ge \rho_{k}(d-1,2,d-1)+f_{k-1}(P/v'_2)+ f_{k-1}(Z/v'_1) +\binom{d-2}{k}\\
  & \ge \rho_{k}(d-1,2,d-1)
  +\theta_{k-1}(2d-4,d-1)+\theta_{k-1}(2d-5,d-2)+\binom{d-2}{k}\\
  &\ge \brac*{\binom{d}{k+1}+\binom{d-1}{k+1}+\binom{d-2}{k+1}-\binom{3}{k+2}}\\
  &\quad + \brac*{\binom{d}{k}+\binom{d-1}{k}-\binom{3}{k}}+\brac*{\binom{d-1}{k}+\binom{d-2}{k}-\binom{2}{k}} +\binom{d-2}{k}\\
  &\ge \eta_{k}(3d-3,d)+\binom{3}{k+1}+\binom{d-2}{k}-\binom{3}{k+2}-\binom{3}{k}-\binom{2}{k}> \eta_{k}(3d-3,d).
\end{aligned}
\end{equation}

\textbf{Now assume that that every facet (other than $F$) containing a $(d-2)$-face $T(2)\times T(d-4)$ of $F$ also contains all the three vertices from $X$.} Let $R_w$ be a $(d-2)$-face of this form that does not contain the nonsimple vertex $w$ of $F$, and let $F_w$ be the other facet of $P$ containing $R_w$. Then $X\subset F_w$, and $f_0(F_w)= 3(d-3)+3> 2d-1$ for $d\ge 6$. The situation of $F_{w}$ having at least $d+2$ facets has already been treated in Case \ref{case:3minus1-minus-r-dplus2} of \cref{cl:3minus1-more-2dplus1}. So assume $F_{w}$ has exactly $d+1$ facets. Since $F_{w}$ is not a pyramid---none of the vertices in $X$ or $R_w$ can be apices---it must be a simple polytope (\cref{lem:dplus2facets}). Since $F_w$ has $3(d-2)$ vertices, it must be $T(2)\times T(d-3)$. Hence, every vertex in $R_w=F_w\cap F$ is simple in $P$.

Now consider a facet $U_1$ intersecting $F$ in a face $K_1$ with $\dim(K_1)\le d-3$. Then  $U_1$ is a two-fold pyramid (see \cref{fact:two-fold-facet}). The facet $U_1$ contains either two or all three vertices from $X$, and $K_1$ is either a $(d-3)$-face  or a $(d-4)$-face.

Suppose first that $K_1$ is a $(d-3)$-face.  Then $f_0(K_1)\ge d-2\ge 4$ for $d\ge 6$. Since $U_1$ is a two-fold pyramid over $K_1$, $\card{(F\setminus R_w)}=3$, and $d\ge 6$, we find that $K_1\cap R_w\ne \emptyset$, so the vertices in  $K_1\cap R_w$ must be adjacent to the two apices of $U_1$, which lie outside $F$. But the vertices in $F_w\cap F$ are all simple in $P$, a contradiction. 

Finally, assume that $K_1$ is a $(d-4)$-face. By \cref{fact:faces-F}, $K_1$ is a $(d-4)$-simplex, $T(1)\times T(d-5)$, or $T(2)\times T(d-6)$. If $K_1$ contains any vertex  from $R_w$, then the same adjacency argument as above leads to a contradiction. So we must have   $K_1=T(2)\times T(0)$, which forces  $d=6$, $X\subset \V(U_1)$, and $U_1$ to be a 3-fold pyramid over the triangle $K_1$. Since $K_1$ is a $(d-4)$-face (a 2-simplex), there must exist a facet $F_2\ne F,U_1$ of $P$ that contains $K_1$ and intersects $F$ in a $(d-2)$-face $T(2)\times T(2)$ (\cref{fact:faces-F}). By assumption, $F_2$ must contain all the three vertices in $X$  and hence that $U_1\subset F_2$, a contradiction.
\end{subcases}
\end{claimproof} 

\subsection{Claim  \ref{cl:3minus1-less-2dplus1}: A facet with between $d$ and $2d-2$ vertices} We begin with an overview of the proof. By \cref{lem:2d+s-dplus3-simple-pyramid}, we may assume that $P$ contains a vertex that is both nonpyramidal and nonsimple; let  $v_1$ be one such vertex of maximum degree. Let $F$ be a facet of $P$ not containing $v_1$. By  \cref{cl:3minus1-pyramid,cl:3minus1-minus-2,cl:3minus1-more-2dplus1}, we may assume that $F$ has at most $2d-2$ vertices.  

We bound $f_k(F)$ using \cref{thm:at-most-2d} if $F$ has $d$ or $d+1$ facets, or \cref{thm:at-most-2d-refined-short} if it has at least $d+2$ facets. The analysis from \cref{cl:3minus1-more-2dplus1} to bound the faces containing the vertices outside $F$ is insufficient to settle this claim.   We therefore resort to a shelling argument inspired by Blind and Blind~\cite{BliBli99}, coupled with \cref{cor:number-faces-outside-facet-practical}.

Let  $\mathcal A(v_1)$ be the antistar  of $v_1$ in the boundary complex of $P$. There is a (line) shelling $S=F_0\ldots F_m$ of $\mathcal A(v_1)$ that begins with the facet $F$ (that is,  $F_0=F$), followed by the other facets in $\mathcal A(v_1)$.

Since each facet in $\mathcal A(v_1)$ has at most $2d-2$ vertices, for any $j\ge 1$  the set $F_{j}\setminus (\cup_{i<j} F_i)$ contains at most $d-1$ vertices. The main idea is to apply \cref{cor:number-faces-outside-facet-practical} to these `new' vertices at each step.  We distinguish two cases according to the size of $F\cup F_1$. In  \cref{case:3minus1-small-facet-two}, $F\cup F_1$ does not contain all vertices of $P$ except $v_1$; in   \cref{case:3minus1-small-facet-one}, $F \cup F_1$ does. We now proceed with the details.

\begin{claim} Let $P$ be a $d$-polytope with a vertex $v_1$ that is neither pyramidal nor simple and has the maximum degree among such vertices. Let $F$ be a facet with $d-1+p$ vertices and not containing $v_1$, where $1\le p\le d-1$. Then, for each $k\in [1\ldots d-2]$, it holds that $f_k(P)\ge \eta_k(2d+\ell,d)$. 
  \label{cl:3minus1-less-2dplus1}
\end{claim}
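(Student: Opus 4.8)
The plan is to count the $k$-faces of $P$ in two groups: those not containing $v_1$, which are exactly the $k$-faces of the antistar $\mathcal A(v_1)$, and those containing $v_1$, of which there are at least $f_{k-1}(P/v_1)$; the first group is estimated by processing the shelling $F_0=F,F_1,\dots,F_m$ of $\mathcal A(v_1)$ one facet at a time. Two estimates are at hand from the start. Since $f_0(F)=d-1+p\le 2(d-1)$, \cref{thm:at-most-2d} gives $f_k(F)\ge\theta_k(d-1+p,d-1)$, and this is sharpened through \cref{thm:at-most-2d-refined-short} when $F$ has at least $d+2$ facets. Since $v_1$ is nonsimple, $\deg_P v_1\ge d+1$, so \cref{cor:more-dpluss} gives $f_{k-1}(P/v_1)\ge\theta_{k-1}(\deg_P v_1,d-1)\ge\theta_{k-1}(d+1,d-1)$, a bound that improves whenever $\deg_P v_1>d+1$.

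For the shelling, for each $j\ge1$ let $W_j:=F_j\setminus(F_0\cup\dots\cup F_{j-1})$ be the set of new vertices introduced by $F_j$. Since every vertex of $P$ other than $v_1$ lies in some facet of $\mathcal A(v_1)$, the blocks $W_j$ partition $\V(P)\setminus(\set{v_1}\cup\V(F))$, whence $\sum_{j\ge1}\card{W_j}=(2d+\ell)-1-(d-1+p)=d+\ell-p$. Because $F_j$ meets $F_0\cup\dots\cup F_{j-1}$ in a nonempty union of ridges of $F_j$, we have $\card{W_j}\le f_0(F_j)-(d-1)\le(2d-2)-(d-1)=d-1<(d-1)+1$, so \cref{cor:number-faces-outside-facet-practical} applies inside the $(d-1)$-polytope $F_j$ to the sequence $W_j$: the number of $k$-faces of $F_j$ not already present in $F_0\cup\dots\cup F_{j-1}$ is at least the number of $k$-faces of $F_j$ meeting $W_j$, hence at least $\sum_{i=1}^{\card{W_j}}\binom{d-i}{k}$, with strict inequality whenever one of the relevant vertex figures is nonsimple. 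Summing over the shelling,
\[
f_k(P)\ge f_{k-1}(P/v_1)+\theta_k(d-1+p,d-1)+\sum_{j=1}^{m}\sum_{i=1}^{\card{W_j}}\binom{d-i}{k},
\]
and the task is to show the right-hand side is at least $\eta_k(2d+\ell,d)$ for all $k\in[1\ldots d-2]$.

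The last step splits according to whether $F\cup F_1$ contains every vertex of $P$ other than $v_1$. If it does not (\cref{case:3minus1-small-facet-two}), a vertex lies outside $F\cup F_1$, so the blocks $W_j$ spread over at least two shelling steps and the outside vertex supplies a further vertex-figure term; running over the admissible block-size patterns together with elementary binomial inequalities then pushes the bound above $\eta_k(2d+\ell,d)$. If it does (\cref{case:3minus1-small-facet-one}), write $R:=F\cap F_1$; then $f_0(F)+f_0(F_1)-f_0(R)=2d+\ell-1$, which forces $\ell\le d-2$ and restricts both $F$ and $F_1$ to the short lists of \cref{thm:at-most-2d,thm:at-most-2d-refined-short}. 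One closes this case with the stronger estimate $f_k(P)\ge f_k(F)+\bigl(f_k(F_1)-f_k(R)\bigr)+f_{k-1}(P/v_1)$, invoking the maximality of $\deg_P v_1$ to sharpen the bound on $\deg_P v_1$ where the crude value $d+1$ does not suffice. With both cases settled, \cref{cl:3minus1-less-2dplus1} follows, completing the inductive proof of the inequality in \cref{thm:3minus1}.

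The main obstacle is the binomial bookkeeping in the shelling: the per-vertex estimate $\binom{d-i}{k}$ is weakest when a block $W_j$ is large, since $\sum_{i=1}^{b}\binom{d-i}{k}$ falls short of $b\binom{d-1}{k}$. This is precisely what happens in the tight instance of \cref{case:3minus1-small-facet-one}, where a single block $W_1$ of size $d+\ell-p$ carries the whole shelling contribution and there is little slack against $\eta_k(2d+\ell,d)$; handling it requires the refined face counts for $F$ and $F_1$ and the improved degree bound for $v_1$, while the remaining instances reduce to a finite, if tedious, verification over the pairs $(p,\card{W_j})$ and over $k$.
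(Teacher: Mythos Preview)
Your overall architecture matches the paper: split the $k$-faces into those containing $v_1$ (bounded via $f_{k-1}(P/v_1)$) and those in the antistar $\mathcal A(v_1)$, then process the antistar by a shelling $F_0=F,F_1,\dots,F_m$, applying \cref{cor:number-faces-outside-facet-practical} inside each $F_j$ to its block $W_j$ of new vertices. Your first case (at least two nonempty blocks) is an accurate sketch; the paper disposes of it by three routine binomial computations according to the position of $\ell-p+1$ relative to $1$ and $d-1$.

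The gap is in your second case. Your ``stronger estimate'' $f_k(F)+\bigl(f_k(F_1)-f_k(R)\bigr)+f_{k-1}(P/v_1)$ is not actually stronger than the paper's $f_k(F)+\sum_{i=1}^{d+\ell-p}\binom{d-i}{k}+f_{k-1}(P/v_1)$ where it matters: since $R=F\cap F_1$ is a single ridge, the $k$-faces of $F_1$ outside $R$ are exactly those meeting $W_1$, and in the critical configuration both quantities equal $\binom{d}{k+1}$. More seriously, ``invoking the maximality of $\deg_P v_1$'' does not by itself supply the needed degree bound. The hard sub-case is when $F$ has exactly $d+1$ facets and is simple; then both $F$ and $F_1$ are simplicial $(d-1)$-prisms meeting in a simplex ridge, forcing $\ell=d-2$. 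Plugging $f_{k-1}(P/v_1)\ge\theta_{k-1}(d+1,d-1)$ into your estimate leaves a deficit of $\binom{d-2}{k}-\binom{2}{k+1}>0$ against $\eta_k(3d-2,d)$. Closing it requires $\deg_P v_1\ge 2d-2$, but maximality among nonsimple nonpyramidal vertices gives nothing here: every vertex of $F$ and $F_1$ is simple within those prisms, so there is no comparison vertex of high degree available a~priori. The paper handles this sub-case not by bookkeeping but by structural analysis: it examines auxiliary facets $F_{w_1}$ and $F_{u_1}$ and whether they are pyramids, uses that the only simple $(d-1)$-polytope with $3(d-1)-2$ vertices is $T(3)\times T(3)$ (which has no simplicial-prism facet), applies the Excess theorem to force $\deg_P v_1=2d-2$ once all other vertices are shown simple, and finally invokes a graph-reconstruction result to identify $P$ as $\Sigma(d)$. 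This is well beyond ``a finite, if tedious, verification over the pairs $(p,\card{W_j})$''.
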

\begin{claimproof}
  By  \cref{cl:3minus1-pyramid,cl:3minus1-minus-2,cl:3minus1-more-2dplus1}, we assume that  all the facets of $P$ not containing $v_1$ have at most $2d-2$ vertices.  

  
  The \emph{antistar} $\mathcal A(v_1)$ of $v_1$ in the boundary complex of $P$ consists of all faces of $P$ disjoint from $v_1$ and forms a strongly connected $(d-1)$-complex \cite[Prop.~3.3.1]{Pin24}. (A polytopal complex is \emph{pure} if every face is contained in some facet, while a pure complex $\mathcal{C}$ is \emph{strongly connected} if every two facets $J$ and $J'$ are connected by a path $J_1\ldots J_n$ of facets in $\mathcal{C}$ such that $J_i\cap J_{i+1 }$ is a ridge of $\mathcal C$ for $i\in [1\ldots n-1]$, $J_1 = J$ , and $J_n = J'$.) 

  There exists a shelling of $P$ in which the facets containing $v_1$ appear first, and the facet $F\in \mathcal A(v_1)$ appears last \cite[Prop.~2.12.13]{Pin24}. Reversing this shelling yields a shelling $S:=F_0\ldots F_m$ of $\mathcal A(v_1)$ that begins with the facet $F=F_0$, followed by the other facets in $\mathcal A(v_1)$. (A \emph{shelling} of  a pure polytopal complex $\mathcal{C}$ is a linear ordering $J_1,\ldots,J_s$ of its facets such that either $\dim \mathcal C = 0$ or it satisfies the following: (i) The boundary complex of $J_1$ has a shelling, and (ii) for each $j\in [2\ldots s ]$, the intersection $J_j\cap \paren*{\cup_{i=1}^{j-1} J_i}$ is nonempty and forms the beginning of a shelling of the boundary complex of $J_j$.)

Each facet in $\mathcal A(v_1)$ has at most $2(d-1)$ vertices, so for any facet $F_{j}$  in  $S$, the set $F_{j}\setminus (\cup_{i=1}^{j-1} F_i)$ contains at most $d-1$ vertices.  See \cite[Sec.~2.12]{Pin24} for details on shellings. Let $v'_{11},\ldots v'_{1a_1}$ be the vertices in $F_{1}\setminus F_{0}$. Then  $1\le a_1 \le d-1$.  By \cref{cor:number-faces-outside-facet-practical}, there are faces $J_{11},\ldots, J_{1a_1}$ in $F_{1}$ such that each $J_{1i}$ has dimension $d-1-i+1$ and contains $v'_{1i}$ but does not contain any $v'_{1j}$ with $j<i$. Therefore, the number of $k$-faces of $F_{1}$ that contain at least one of the vertices in $\set{v'_{11},\ldots,v'_{1a_1}}$ is bounded  below by
\begin{align*}
  \sum_{i=1}^{a_1}f_{k-1}(J_{1i}/v'_{1i})&\ge \sum_{i=1}^{a_1} \binom{d-i}{k}.
  \end{align*}
  We  consider the facet $F_{2}$ in the shelling. Let  $a_2$ be the number of vertices in $F_{2}\setminus (F_{0}\cup F_{1})$, so $0\le a_2 \le d-1$. We repeat the above argument to bound  the number of $k$-faces within $F_2$ containing at least one of  these $a_2$ vertices. We continue  this process until the facet $F_{m}$, where $F_m\setminus (F_0\cup \cdots \cup F_{m-1})$ contains $a_m$ vertices, again with $0\le a_m\le d-1$. In the $i$th step, we count only $k$-faces that include at least one of the  vertices in $F_i\setminus (F_0\cup \cdots \cup F_{i-1})$, so there is no double counting. Also, every vertex in $P$ other than $v_1$ is in $\mathcal A(v_1)$.  It follows that 
  \begin{equation}\label{eq:antistar-number-vertices}
    \sum_{i=1}^m a_i=d+\ell-p, 
  \end{equation}
and
  \begin{equation}\label{eq:3minus1-less-2dplus1-1}
      \begin{aligned}
        f_k(P) & \ge f_k(F)+f_{k-1}(P/v_1)+\sum_{i=1}^{a_1}\binom{d-i}{k}+\dots+ \sum_{i=1}^{a_{m}}\binom{d-i}{k}.
     \end{aligned}
     \end{equation}
Two cases arise, depending on the number of positive numbers in $\set{a_1,\ldots,a_m}$.

\case There are at least two positive numbers in $\set{a_1,\ldots,a_m}$.
\label{case:3minus1-small-facet-two}

We have $f_k(F)\ge \theta_k(d-1+p,d-1)$ by \cref{thm:at-most-2d}. Since $v_1$ is nonsimple,  it lies in at least $\theta_{k-1}(d+1,d-1)$ $k$-faces. The following estimate holds:

We consider three subcases depending on the value of $\ell - p + 1$.

 
 \textbf{Suppose $1\le \ell-p+1< d-1$.} Then $d+\ell-p> d-1$, and using \eqref{eq:antistar-number-vertices} and \eqref{eq:3minus1-less-2dplus1-1} we obtain 
\begin{equation}
\label{eq:3minus1-less-2dplus1-case1-part1}
    \begin{aligned}
     f_k(P) & \ge \theta_k(d-1+p,d-1)+\theta_{k-1}(d+1,d-1)+\sum_{i=1}^{d-1}\binom{d-i}{k}+\sum_{i=1}^{\ell-p+1}\binom{d-i}{k} \\
      & =\binom{d+1}{k+1}+\binom{d}{k+1}-\binom{d-p}{k+1}-\binom{d-2}{k}+\sum_{i=1}^{d-1}\binom{d-i}{k}+\sum_{i=1}^{\ell-p+1}\binom{d-i}{k} \\
      & =\eta_k(2d+\ell,d)-\underbrace{\brac*{\binom{d}{k+1}-\sum_{i=1}^{d-1}\binom{d-i}{k}}}_{=0}-\brac*{\binom{d-p}{k+1}-\binom{d-\ell}{k+1}}-\binom{d-2}{k}\\
      &\quad +\sum_{i=1}^{\ell-p+1}\binom{d-i}{k} \\
      & =\eta_k(2d+\ell,d)-\sum_{i=1}^{\ell-p}\binom{d-p-i}{k}-\binom{d-2}{k}+\sum_{i=1}^{\ell-p+1}\binom{d-i}{k}\\
      & =\eta_k(2d+\ell,d)+\sum_{i=1}^{\ell-p}\brac*{\binom{d-1-i}{k} -\binom{d-p-i}{k}}+ \binom{d-1}{k}-\binom{d-2}{k} \\
      &>  \eta_k(2d+\ell,d). 
      \end{aligned}
  \end{equation}

  \textbf{Suppose $ \ell-p+1\ge  d-1$}. Then $d-\ell\le 1$. Since $\sum_{i=1}^m a_i=d+\ell-p$ (see \eqref{eq:antistar-number-vertices}), we get a simpler computation in \eqref{eq:3minus1-less-2dplus1-1}:
  
\begin{equation}
\label{eq:3minus1-less-2dplus1-case1-part2}
    \begin{aligned}
     f_k(P) & \ge \theta_k(d-1+p,d-1)+\theta_{k-1}(d+1,d-1)+\sum_{i=1}^{d-1}\binom{d-i}{k}+\sum_{i=1}^{d-1}\binom{d-i}{k} \\
      & =\eta_k(2d+\ell,d)-\underbrace{\brac*{\binom{d}{k+1}-\sum_{i=1}^{d-1}\binom{d-i}{k}}}_{=0}-\binom{d-p}{k+1}+\underbrace{\binom{d-\ell}{k+1}}_{=0} -\binom{d-2}{k} \\
      &\quad +\sum_{i=1}^{d-1}\binom{d-i}{k} \\
& =\eta_k(2d+\ell,d)-\binom{d-p}{k+1}-\binom{d-2}{k}+\binom{d}{k+1}\\
      & >  \eta_k(2d+\ell,d). 
      \end{aligned}
  \end{equation}

\textbf{Suppose that $\ell-p+1<1$.} Then $d+\ell-p\le d-1$, and \eqref{eq:antistar-number-vertices} and \eqref{eq:3minus1-less-2dplus1-1} imply
\begin{equation}
\label{eq:3minus1-less-2dplus1-case1-part3}
    \begin{aligned}
     f_k(P) & \ge \theta_k(d-1+p,d-1)+\theta_{k-1}(d+1,d-1)+\sum_{i=1}^{d+\ell-p-1}\binom{d-i}{k}+\binom{d-1}{k} \\
      & =\eta_k(2d+\ell,d)-\binom{d}{k+1}+\brac*{\binom{d-\ell}{k+1}-\binom{d-p}{k+1}}-\binom{d-2}{k}\\
      &\quad +\sum_{i=1}^{d+\ell-p-1}\binom{d-i}{k}+\binom{d-1}{k} \\
      & =\eta_k(2d+\ell,d)-\binom{d-1}{k+1}+\brac*{\binom{d-\ell}{k+1}-\binom{d-p}{k+1}}-\binom{d-2}{k}+\sum_{i=1}^{d+\ell-p-1}\binom{d-i}{k}\\
      & =\eta_k(2d+\ell,d)-\sum_{i=1}^{d-2}\binom{d-1-i}{k}+\sum_{i=1}^{p-\ell}\binom{d-\ell-i}{k}-\binom{d-2}{k}\\
      &\quad +\sum_{i=1}^{d+\ell-p-2}\binom{d-1-i}{k}+\binom{d-1}{k}\\
      & =\eta_k(2d+\ell,d)-\sum_{i=d+\ell-p-1}^{d-2}\binom{d-1-i}{k}+\sum_{i=1}^{p-\ell}\binom{d-\ell-i}{k}+\binom{d-1}{k}-\binom{d-2}{k}\\
      & =\eta_k(2d+\ell,d)+\sum_{i=1}^{p-\ell}\underbrace{\brac*{\binom{d-\ell-i}{k}-\binom{p+1-\ell-i}{k}}}_{\ge 0\; \text{as $d\ge p+1$}} +\binom{d-1}{k}-\binom{d-2}{k}\\
      & >  \eta_k(2d+\ell,d). 
      \end{aligned}
  \end{equation}
\case There is only one positive number in $\{a_1, \dots, a_m\}$.
\label{case:3minus1-small-facet-one}

That is, there is a facet $F_i$ such that $F_i \cup F$ contains all the vertices of $P$ except $v_1$. This forces $i=1$.  We have  $\card(F_1\setminus F)=d+\ell-p \le d-1$; that is, $a_1=d+\ell-p$ and $\ell \le p-1$. 

\textbf{Suppose that $F$ has at least $d+2$ facets.} Then $f_k(F)\ge \zeta_k(d-1+p,d-1)$ by \cref{thm:at-most-2d-refined-short} (see also \eqref{eq:atmost2d-refined}), and  \eqref{eq:3minus1-less-2dplus1-1}  becomes
\begin{equation}\label{eq:3minus1-less-2dplus1-3}
      \begin{aligned}
        f_k(P) & \ge \theta_k(d-1+p,d-1)+\binom{d-2}{k}-\binom{d-p}{k}+\theta_{k-1}(d+1,d-1) +\sum_{i=1}^{d+\ell-p}\binom{d-i}{k}\\  
        & =\binom{d+1}{k+1}+\binom{d}{k+1}-\binom{d-p}{k+1}-\binom{d-p}{k}+\sum_{i=1}^{d+\ell-p}\binom{d-i}{k}\\  
        & =\eta_k(2d+\ell,d)-\binom{d}{k+1}+\brac*{\binom{d-\ell}{k+1}-\binom{d-p+1}{k+1}}+\sum_{i=1}^{d+\ell-p}\binom{d-i}{k}\\ 
        & = \eta_k(2d+\ell,d)-\sum_{i=0}^{d-1}\binom{i}{k}+\sum_{i=1}^{p-\ell-1}\binom{d-\ell-i}{k}+\sum_{i=p-\ell}^{d-1}\binom{i}{k} \\
        & = \eta_k(2d+\ell,d)-\brac*{\sum_{i=1}^{p-\ell-1}\binom{i}{k}+\sum_{i=p-\ell}^{d-1}\binom{i}{k}}+\sum_{i=1}^{p-\ell-1}\binom{d-\ell-i}{k}+\sum_{i=p-\ell}^{d-1}\binom{i}{k} \\
        & = \eta_k(2d+\ell,d)+\sum_{i=1}^{p-\ell-1}\underbrace{\brac*{\binom{d-\ell-i}{k}-\binom{p-\ell-i}{k}}}_{\ge 0\; \text{as $d>p$}} \ge \eta_k(2d+\ell,d).
      \end{aligned}
  \end{equation}
Hence we have equality only if $\ell=p-1$ or $k\ge d-\ell$.    

\textbf{Suppose that $F$ has exactly $d$ facets.} Then $F$ is a $(d-1)$-simplex. Since $\card(F_1\cup F)\le 2d-1$,  the number of vertices in $P$ would be at most $2d$, contrary to  our assumption.

\textbf{Suppose that $F$ has exactly $d+1$ facets and is a nonsimple polytope.} Then, $F$  is a $(d-1-p)$-fold pyramid over $T(1)\times T(p-1)$ for some $2\le p\le d-1$ (\cref{lem:dplus2facets}), so $f_k(F)=\theta_k(d-1+p,d-1)$ (\cref{thm:at-most-2d}). The apex of $F$ has degree $d-2+p$ in $F$, so $\deg_P(v_1)\ge d-1+p$. The corresponding bound is:
\begin{equation}\label{eq:3minus1-less-2dplus1-4}
    \begin{aligned}
    f_k(P) & \ge \theta_k(d-1+p,d-1)+\theta_{k-1}(d-1+p,d-1) +\sum_{i=1}^{d+\ell-p}\binom{d-i}{k}\\  
    & = \binom{d}{k+1}+\binom{d-1}{k+1}-\binom{d-p}{k+1}+\binom{d}{k}+\binom{d-1}{k}-\binom{d-p}{k} +\sum_{i=1}^{d+\ell-p}\binom{d-i}{k}\\  
    & = \binom{d+1}{k+1}+\binom{d}{k+1}-\binom{d-p}{k+1}-\binom{d-p}{k}+\sum_{i=1}^{d+\ell-p}\binom{d-i}{k} \ge \eta_k(2d+\ell,d), 
   \end{aligned}
\end{equation}
with the final inequality obtained exactly as in  \eqref{eq:3minus1-less-2dplus1-3}. Consequently,  equality holds precisely when $\ell=p-1$ or $k\ge d-\ell$.

\textbf{Suppose that $F$ has exactly $d+1$ facets and is a simple polytope.} Then $F$ is a simplicial $(d-1)$-prism and $f_k(F)=\theta_k(2(d-1),d-1)$. By  the previous analysis, we may assume that $F_1$ is also a simplicial $(d-1)$-prism. Since $F\cap F_1$ must be a $(d-2)$-simplex $R$ (else $f_0(P)\le 2d+1$), we have  $f_0(P)=3d-2$. Label the vertices in $R$ as $w_1,\ldots,w_{d-1}$.

Assume a vertex in $R$, say $w_1$, is  nonsimple in $P$. Then $w_1$ is  adjacent to $v_1$.  Let $F_{w_1}$ be a facet  that intersects $F$ at the simplicial $(d-2)$-prism that does not contain  $w_1$. Suppose that $F_{w_1}$ is a pyramid. 
Since no vertex in $F_1\setminus F$ can be the apex of $F_{w_1}$,  $v_1$ must be the apex, in which case $\deg_{F_{w_1}}(v_1)\ge 2d-4$. Moreover, $v_1$ is adjacent to $w_1$ and to a vertex in $F_1\setminus F$ because removing the facet $F$ cannot disconnect the graph of $P$ \cite[Thm.~4.16]{Pin24}. It follows that $\deg_{P}(v_1)\ge 2d-2$. Since  $f_k(F)=\theta_k(2(d-1),d-1)$, the bound in \eqref{eq:3minus1-less-2dplus1-1} gives  $f_k(P)\ge \eta_k(3d-2,d)$:
\begin{equation}
\label{eq:cl:3minus1-less-2dplus1-pyramid}
    \begin{aligned}
    f_k(P) &\ge  f_k(F)+f_{k-1} (P/v_1)+\sum_{i=1}^{d+(d-2)-(d-1)}\binom{d-i}{k}\\
    &\ge \theta_k(2(d-1),d-1)+\theta_{k-1}(2(d-1),d-1) +\sum_{i=1}^{d-1}\binom{d-i}{k}= \eta_k(3d-2,d).
   \end{aligned}
\end{equation}
Suppose that $F_{w_1}$ is not a pyramid. Then $F_{w_1}\cap F_1$ (and $F_{w_1}\cap F$) contains vertices from the two disjoint $(d-2)$-simplices in $F_1$ (and in $F$, respectively), implying that $F_{w_1}\cap F_1$ is a simplicial $(d-2)$-prism. It follows that $f_0(F_{w_1})\ge 3d-6\ge 2d-1$ for $d\ge 5$.  By \cref{cl:3minus1-pyramid,cl:3minus1-minus-2,cl:3minus1-more-2dplus1}, we may assume $v_1\in F_{w_1}$, in which case $f_0(F_{w_1})= 3d-5$ because every facet in $P$ has at most $f_0(P)-3$ vertices (\cref{cl:3minus1-pyramid,cl:3minus1-minus-2}). The only simple $(d-1)$-polytope with $3(d-1)-2$ vertices is $T(3)\times T(3)$   \cite[Lemma~2.19]{PinUgoYos16a},  but it does not have a simplicial prism facet. Hence, $F_{w_1}$ is not a simple polytope (nor a pyramid), so it must have  at least $d+2$ facets (\cref{lem:dplus2facets}). Moreover,  $w_1\notin F_{w_1}$ has degree at least $d+1$ in $P$, and the computation in Case 1 of \cref{cl:3minus1-more-2dplus1} applies (see \eqref{eq:3minus1-minus-r-dplus2-2}), with $F_{w_1}$ playing the role of $F$ and $w_1$ the role of $v_1$:
\begin{equation}
\label{eq:cl:3minus1-less-2dplus1-nonpyramid}
\begin{aligned}  
f_k(P)&\ge f_k(F_{w_1})+f_{k-1} (P/w_1)+\sum_{i=1}^{2}\binom{d-i}{k}\\
&\ge \eta_k(2(d-1)+d-3,d-1)+\theta_{k-1}(d+1,d-1)+\sum_{i=1}^{2}\binom{d-i}{k}\\  
&= \brac*{\binom{d}{k+1}+2\binom{d-1}{k+1}-\binom{2}{k+1}}+\brac*{\binom{d}{k}+\binom{d-1}{k}-\binom{d-2}{k}}+\sum_{i=1}^{2}\binom{d-i}{k}\\
&=\eta_k(3d-2,d).
\end{aligned}
\end{equation}
Consequently, we may assume that every vertex in $R$ is simple in $P$. We now show  all the vertices of $P$ except $v_1$ are simple in $P$. Suppose otherwise, and without loss of generality, let  $u_1\in (F\setminus R)\cup (F_1\setminus R)$ be a nonsimple vertex in $P$. We may further assume that $u_1$ is adjacent to $w_1$. Since $w_2$ is simple in $P$, $w_2$ and its $d-1$ neighbours in $F\cup F_1$ other than $w_1$ define a facet $F_{u_1}$ of $P$ that does not contain $u_1$ or $w_1$ \cite[Thm.~2.8.6]{Pin24}. 

Because $F_{u_1}\cap F$ meets both disjoint $(d-2)$-simplices in $F$,  it follows that $F_{u_1}\cap F$ is a simplicial $(d-2)$-prism; likewise, $F_{u_1}\cap F_1$ is a simplicial $(d-2)$-prism. Hence $\card(F_{u_1}\cap (F\cup F_1))=3d-6$. If $F_{u_1}$ does not contain $v_1$, then $f_0(F_{u_1})=3d-6$. Since $3d-6\ge  2d-1$ for $d\ge 5$, this scenario was settled in \cref{cl:3minus1-more-2dplus1}. If $F_{u_1}$ contains $v_1$, then $f_0(F_{u_1})= 3d-5$. As before, the only simple $(d-1)$-polytope with $3(d-1)-2$ vertices is $T(3)\times T(3)$   \cite[Lemma~2.19]{PinUgoYos16a}, which has no simplicial prism facet. Thus $F_{u_1}$ is neither a simple polytope  nor a pyramid, so it must have  at least $d+2$ facets (\cref{lem:dplus2facets}). Accordingly, the computation in \eqref{eq:cl:3minus1-less-2dplus1-nonpyramid} applies, with $F_{u_1}$ replacing $F_{w_1}$ and $u_1$ replacing $w_1$, yielding $f_k(P)\ge \eta_k(3d-2,d)$.

Hence every vertex in $P$ except $v_1$ must be simple in $P$, and the graph of $P$ is therefore determined. By the Excess theorem (\cref{thm:excess-degree}),  $\deg_P(v_1)=2d-2$; that is, $v_1$ is adjacent to every vertex in $(F\setminus F_1)\cup (F_1\setminus F)$. Since a polytope with a single nonsimple vertex is determined by its graph \cite{DPUY2019}, $P$ must be $\Sigma(d)$, the convex hull of \[\set*{0,e_1,e_1+e_k,e_2,e_2+e_k,e_1+e_2,e_1+e_2+2e_k: 3\le k\le d}.\]  See also \eqref{eq:Sigma_d}. Therefore $f_k(P)= \eta_k(3d-2,d)$ for all $k\in [1\ldots d-2]$, and the claim follows.
\end{claimproof}




\subsection{Final part for the inequality statement of the theorem.} By \cref{cl:3minus1-pyramid}, we may assume that $P$ is not a pyramid, while by \cref{cl:3minus1-minus-2}, we may further restrict to the case where $P$ does not have a facet with  $f_0(P)-2$ vertices. Also, by \cref{lem:2d+s-dplus3-simple-pyramid}, we may assume that $P$ contains a vertex that is both nonpyramidal and nonsimple; let  $v_1$ be one such vertex of maximum degree. Let $F$ be a facet of $P$ that does not contain $v_1$. If $2d-1=f_0(P)-(\ell+1)\le f_0(F)\le f_0(P)-3$, then \cref{cl:3minus1-more-2dplus1} establishes that $f_k(P)\ge \eta_k(2d+\ell,d)$ for $k\in [1\ldots d-2]$ and $\ell\ge 1$. Otherwise  \cref{cl:3minus1-less-2dplus1} yields
$f_k(P)\ge \eta_k(2d+\ell,d)$ for $k\in [1\ldots d-2]$ and $\ell\ge 1$. 

\subsection{The equality statement of the theorem}

The equality part is settled in \cref{claim5:equality_analysis}. In this claim, we examine all the inequalities in \cref{cl:3minus1-pyramid,cl:3minus1-minus-2,cl:3minus1-more-2dplus1,cl:3minus1-less-2dplus1} that can hold with equality, relying on the analysis carried out in the inequality part.

\begin{claim}\label{claim5:equality_analysis}
   Let $d\ge 3$ and $\ell \ge 3$. If $P$ has $2d+\ell$ vertices and at least $d+3$ facets, and if  $f_k(P)=\eta_k(2d+\ell, d)$ for  some $k\in [1\ldots d-2]$, then $P$ has exactly $d+3$ facets.
\begin{proof}
    We proceed by induction on $d\ge 3$ for all $\ell\ge 1$. As noted earlier, the case $\ell = 1$ was independently settled in \cite{PinYos22} and in \cite{Xue24}, and the case $\ell = 2$ was verified  in \cite{PinTriYos24}.  Assume $\ell\ge 3$.
    
 For $d=3$, if $\ell \ge 3$ and $f_2\ge 6$, then  by Euler's polyhedral formula,  $f_1=f_0+f_2-2\ge 13>12=\eta_1(6+\ell,3)$. Hence   no 3-polytope with $2d+\ell$ vertices and at least $d+3$ facets satisfies  equality.

    For $d=4$ and $\ell \ge 3$, the proof of the inequality part already established the equality part: $f_1=\eta_1(8+\ell, 4)$ or $f_2=\eta_2(8+\ell, 4)$ holds if and only if $P$ is a simple 4-polytope with 11 vertices---specifically, if  $P=J(4,4)$, which  has 7 facets. Recall that $J(4,4)$ is obtained by truncating a simple vertex from a simplicial $4$-prism. 

    Now let $P$ be a $d$-polytope with $2d+\ell$ vertices, where $d\ge 5$ and $\ell \ge 3$, and at least $d+3$ facets such that  $f_k(P)=\eta_k(2d+\ell, d)$ for some $k\in [1\ldots d-2]$. We examine the possible equality cases arising from the proofs of  \cref{cl:3minus1-pyramid,cl:3minus1-minus-2,cl:3minus1-more-2dplus1,cl:3minus1-less-2dplus1}.
    
  \subsection*{(i) \cref{cl:3minus1-pyramid}:  $P$ is a pyramid over a facet $F$} 
  
  For some $k\in [2\ldots d-2]$, if $1\le \ell\le d-2$, then $f_k(P)=\eta_k(2d+\ell, d)$ in \eqref{eq:claim1_smallell_fk} implies  $f_k(F)=\eta_{k}(2(d-1)+\ell+1,d-1)$; moreover, if  $\ell\ge d-1$, then $f_k(P)=\eta_k(3d-2, d)=\eta_k(3d-1, d)$ in  \eqref{eq:claim1_largeell_fk} implies $f_k(F)=\eta_{k}(3d-4,d-1)$ for some $k\in [2\ldots d-2]$. By induction hypothesis, equality for some $k\in [2\ldots d-2]$ implies that $F$ has exactly $d+2$ $(d-2)$-faces, and so  $P$ has exactly $d+3$ facets. For $k=1$, equality never occurs (see \eqref{eq:claim1_smallell_f1} and \eqref{eq:claim1_largeell_f1}).

    

  \subsection*{(ii) \cref{cl:3minus1-minus-2}:  $P$  has a facet $F$ with $f_{0}(P)-2$ vertices and is not a pyramid} 
  
  Let $v_{1}$ and $v_{2}$ be the vertices outside $F$.  Two cases arise depending on the number of $(d-2)$-faces in $F$.

\case $F$ has at least $d+2$ facets.


If there exists a  facet $F'$ of $P$ not intersecting $F$ in a ridge of $P$, then $F'$ must be a two-fold pyramid over a $(d-3)$-face of $F$.

For $k\in [2\ldots d-2]$, $f_k(P)=\eta_k(2d+\ell, d)$ in \eqref{eq:claim2_case1_fk} forces $f_k(F)=\eta_k(2d+\ell-2,d-1)$ and $ f_{k-1}(F)=\eta_{k-1}(2d+\ell-2,d-1)$. By induction, $F$ has exactly $d+2$ $(d-2)$-faces.  Any facet $F'$ as above would contribute $k$-faces that intersect $F$ at $(k-2)$-faces, contradicting equality. Hence $P$ has exactly $d+3$ facets. 

Similarly, for $k=1$, equality $f_1(P)=\eta_1(2d+\ell, d)$ in \eqref{eq:claim2_case1_f1} implies that $f_1(F)=\eta_1(2d+\ell-2,d-1)$ and that  $P$ has a unique edge not incident with a vertex of $F$. By induction, $F$ has exactly $d+2$ $(d-2)$-faces. Additionally,  each vertex of $F$ has a unique neighbour outside $F$, which rules out the existence of a facet $F'$ (as it would be a two-fold pyramid). Again, $P$ has exactly $d+3$ facets.

\case  The facet $F$ has exactly $d+1$ facets.
    \begin{subcases}
    
    Two subcases arise depending on whether $F$ is nonsimple or simple  (see \cref{lem:dplus2facets}).
        \subcase  Strict inequalities hold in all scenarios; see \eqref{eq:claim2_subcase2.1_f1}, \eqref{eq:claim2_subcase2.1-dplus2}, and \eqref{eq:claim2_subcase2.1-dplus1}.
        
        
        \subcase If $f_k(P)=\eta_k(3d-4,d)$ in \eqref{eq:claim2_subcase2.2-m2}, then  $f_k(F)=\rho_{k}(d-1,2,d-1)$, the number of $k$-faces containing $v_1$ is exactly $\theta_{k-1}(2(d-1)-1,d-1)$, and the number of $k$-faces containing $v_2$ but not $v_1$ is exactly $\theta_{k-1}(2(d-2),d-2)$. From \cref{thm:at-most-2d}, it follows that $v_1$ lies in exactly $d+1$ facets of $P$. Moreover, there is a unique facet containing $v_2$ but not $v_1$, namely $F_2$. Hence $P$ has exactly $d+3$ facets.
      
    \end{subcases}

  \subsection*{(iii) \cref{cl:3minus1-more-2dplus1}: $P$ is not a pyramid, it has no facet omitting precisely two vertices, and it contains a vertex $v_1$ that is neither pyramidal nor simple and has the maximum degree among such vertices. Moreover,  there exists a facet $F$ not containing $v_1$ with $2d+\ell-r$ vertices, where $3\le r\le \ell+1$}

Let $X:=\set{v_1,\ldots,v_{r}}$ be the set of vertices outside $F$.  By \cref{cor:number-faces-outside-facet-practical}, there exist faces $F_1,\ldots,F_r$ in $P$ such that each $F_i$ has dimension $d-i+1$, contains $v_i$, and excludes any $v_j$ with $j<i$.  Then
\begin{equation*}
f_k(P)\ge f_k(F)+\sum_{i=1}^{r}f_{k-1}(F_i/v_i).
\end{equation*}  
Recall we have the freedom to choose the order of the vertices in the sequence $(v_1,\ldots,v_r)$.
\setcounter{case}{0}

We rely on variations on the following facts, which are inspired by \cite[Sec.~4]{Xue21} and \cite[Sec.~3.1]{PinWanYos24a}.

\begin{fact}
\label{fact:two-faces}
Let $F'$ and $F''$ be two distinct facets of a $d$-polytope that both contain a vertex $v$, and let $1\le k\le d-1$. Then there exists a $k$-face of $F'$ containing $v$ that is not a $k$-face of $F''$.
\end{fact}
\begin{fact}
\label{fact:minimum}
   If the $r$ vertices from $X$ are contained in precisely $\sum_{i=1}^{r}\binom{d+1-i}{k}$ $k$-faces of $P$, then the number of $k$-faces in $P$ that contain the vertex $v_i$ and no vertex $v_j$ with $j<i$ is precisely $\binom{d+1-i}{k}$.  
\end{fact}
\begin{factproof} While the proof is given in \cite[Lem.~20]{PinWanYos24a}, we will reproduce it here for the sake of completeness. 
The minimum number of $k$-faces within a $(d+1-i)$-face $F_i$ that contains the vertex $v_{i}$ is attained when the vertex figure $F_i/v_i$ is  a $(d-i)$-simplex.  Since $F_i$ excludes each $v_j$ with $j<i$ and the number of $k$-faces of  $P$ containing some vertex in $X$ is precisely $\sum_{i=1}^{r}\binom{d+1-i}{k}$, each vertex figure must be a minimiser. 
\end{factproof}

A possible variation of \cref{fact:minimum} is when $r-t$ vertices from $X$ are contained in precisely $\sum_{i=1}^{r-t}\binom{d+1-i}{k}$ $k$-faces of $P$.

    \case The facet $F$ has at least $d+2$ facets.
    
According to \eqref{eq:3minus1-minus-r-dplus2-2}, if $f_{k}(P)=\eta_k(2d+\ell,d)$,  then $\deg_P(v_1)=d+1$, the number of $k$-faces containing $v_1$ is precisely $\theta_{k-1}(d+1,d-1)$, and the number of $k$-faces containing some vertices in $X$ but not $v_1$ is exactly $\sum_{i=1}^{r-1}\binom{d-i}{k}$. 

 The facets of $P$ can be partitioned into the facet $F$ and the sets $X_i$ of facets containing the vertex $v_i$ but no vertex $v_j$ with $j<i$.

Since $f_{k-1}(P/v_1)=\theta_{k-1}(d+1,d-1)$, $P/v_1$ must have $d+1$ facets (\cref{cor:more-dpluss}), so $\card X_1=d+1$. 
Suppose there are two facets $F_2$ and $F_2'$ in $X_{2}$.  Then there exists a $k$-face of $F_{2}'$ that contains $v_2$ and is not a face of $F_{2}$ (\cref{fact:two-faces}). Since $F_{2}$ already contributes $\binom{d-1}{k}$ $k$-faces containing $v_{2}$ (but not $v_{1}$), it follows that the number of $k$-faces in $P$ containing $v_{2}$ but not $v_{1}$ is greater  than $\binom{d-1}{k}$, contradicting \cref{fact:minimum}. Thus $\card X_{2}=1$.  The same idea proves that  $\card X_{\ell}=0$ for $\ell\in [3, r]$, yielding that $P$ has $d+3$ facets.

    \case The facet $F$ has precisely  $d+1=(d-1)+2$ facets.
    \begin{subcases} 
        \subcase $F$ is a nonsimple polytope. Here we have strict inequalities for all $k\in [1 \ldots d-2]$; see \eqref{eq:3minus1-minus-r-dplus1-subcase2.1-k1}, \eqref{eq:3minus1-minus-r-dplus1-subcase2.1-dplus2}, and \eqref{eq:3minus1-minus-r-dplus1-subcase2.1-dplus1}.

        \subcase  $F$ is a simple polytope.  
        The inequality proof considers whether there exists a facet $J_1$ with at least two vertices outside $F \cup J_1$. 
        
        \textbf{Suppose such a facet $J_1$ exists.} Let $J_{1}\cap X=\set{v'_{1},\ldots,v'_{t}}$ and $X\setminus J_1=\set{v_1'',\dots, v_{r-t}''}$. Recall we have the freedom to choose the order of the vertices in $J_{1}\cap X$ and in $X\setminus J_1$. 
        
\subsubsection*{Consider the case $2\le t\le r-2$} 
If $f_k(P)=\eta_k(2d+\ell,d)$ in \eqref{eq:claim3_subcase2.2-large-t-m3} or \eqref{eq:claim3_subcase2.2-large-t-m2}, then  (a) every vertex in $J_1\cap X$ is simple in  $J_1$;  (b) the number of $k$-faces of $J_1$ containing a vertex in $J_1\cap X$ is exactly $S_1: =\sum_{i=1}^{2}\binom{d-i}{k}$; (c) every vertex in $X\setminus J_1$ is simple in $P$; and (d) the number of $k$-faces of $P$ containing a vertex in $X\setminus J_1$  is exactly $S_2: =\sum_{j=1}^{2}\binom{d+1-j}{k}$. 
        
The vertex $v_1''$ is contained in precisely $d$ facets. Since $S_2=\sum_{j=1}^{2}\binom{d+1-j}{k}$, there is a unique facet  $I_2$ of $P$ containing  $v_2''$ but not $v_1''$. Indeed, suppose that there are two facets $I_2$ and $I_2'$ of $P$ containing  $v_2''$ but not $v_1''$, then  $v_2''$  would contribute more than $\binom{d-1}{k}$ to $S_2$, a contradiction; see \cref{fact:two-faces,fact:minimum}. 

Additionally, since $S_1=\sum_{i=1}^{2}\binom{d-i}{k}$, if there were a facet  different from $ J_1$ that contains a vertex in $J_1\cap X$ but no vertex from $X\setminus J_1$, say $J_1'$, then we could select such a vertex as $v_1'$. Then the contribution of the $k$-faces containing $v_1'$ in $J_1\cup J_1'$  would exceed $\binom{d-1}{k}$ (see \cref{fact:two-faces}), yielding $S_1>\sum_{i=1}^{2}\binom{d-i}{k}$. Thus $P$ has exactly $d+3$ facets: the $d$ facets containing $v_1''$, together with $I_2$, $J_1$, and $F$.

 \subsubsection*{Consider the case $t=1$} If $f_k(P)=\eta_k(2d+\ell,d)$  in  \eqref{eq:claim3_subcase2.2_2}, then (a) $r=3$, (b) all the vertices in $X\setminus J_1$ are simple, (c) the number of $k$-faces containing a vertex in $X\setminus J_1$ is exactly $\sum_{j=1}^{r-1}\binom{d+1-j}{k}$, and (d) the number of $k$-faces containing the unique vertex $v_1'$ in $J_1$ is exactly $\theta_{k-1}(2(d-2),d-2)$. It is routine to verify that there are exactly $d$ facets containing the vertex $v_1''$, a unique facet containing some of the vertices in $X\setminus J_1$ but not $v_1''$, and  $J_1$ is the  unique facet of $P$ that contains the vertex $v_1'$  but no vertex from $X\setminus J_1$. Hence  $P$ has exactly $d+3$ facets.

    \textbf{Suppose no such facet $J_1$ exists.} 
        If $f_k(P)=\eta_{k}(2d+\ell,d)$ in \eqref{eq:claim3_subcase2.nofacetJ1-r4} or $f_k(P)=\eta_{k}(3d-6+r,d)$  in \eqref{eq:claim3_subcase2.2_3}, then the number of $k$-faces containing $v_1$ is exactly $\theta_{k-1}(d+2,d-1)$, while the number of $k$-faces containing some vertex in $X\setminus v_1$ but not $v_1$ is exactly $\sum_{j=1}^{r-1}\binom{d-j}{k}$. By \cref{cor:more-dpluss}, the vertex $v_1$ lies in exactly $d+1$ facets of $P$.  Moreover, because the contribution $\sum_{j=1}^{r-1}\binom{d-j}{k}$ is minimal, there must exist a unique facet containing a vertex in $X\setminus \set{v_1}$ but not $v_1$ (\cref{fact:minimum,fact:two-faces}). Hence $P$ has exactly $d+3$ facets.

        Suppose $r=3$. The the relevant equations  are \eqref{eq:claim3_subcase2.2_4} and \eqref{eq:claim3_r3_m2}, which are both  strict.
    \end{subcases}

    
  \subsection*{(iv) \cref{cl:3minus1-less-2dplus1}:  $P$ is not a pyramid,  it contains a vertex $v_1$ that is neither pyramidal nor simple and has the maximum degree among such vertices. Every facet of $P$ not containing $v_1$ has at most $2d-2$ vertices; in particular, let $F$ be one such facet}

In \cref{cl:3minus1-less-2dplus1}, two cases arise. In \cref{case:3minus1-small-facet-two}, all the inequalities---namely, \eqref{eq:3minus1-less-2dplus1-case1-part1}, \eqref{eq:3minus1-less-2dplus1-case1-part2}, and \eqref{eq:3minus1-less-2dplus1-case1-part3}---hold strictly. 

In \cref{case:3minus1-small-facet-one},  the facet $F_1$  in the shelling of the boundary complex of $P$ is such that $F_1 \cup F$ contains all the vertices of $P$ except $v_1$. Four subcases are examined, according to the number of $(d-2)$-faces of $F$: $F$ has at least $d+2$ facets;  $F$ has $d$ facets; $F$ has $d+1$ facets and is nonsimple; and $F$ has $d+1$ facets and is simple. 

Suppose $F$ has at least $d+2$ facets. If $f_k(P)=\eta_k(2d+\ell,d)$ in \eqref{eq:3minus1-less-2dplus1-3}, then $f_k(F)=\zeta_k(d-1+p,d-1)$, the number of $k$-faces containing $v_1$ is exactly $\theta_{k-1}(d+1,d-1)$, and the number of $k$-faces containing vertices outside $F$ but not $v_1$ is exactly $\sum_{i=1}^{d+\ell-p}\binom{d-i}{k}$. From \cref{cor:more-dpluss}, it follows that $v_1$ is contained in exactly $d+1$ facets. Moreover, $F_1$ is the unique facet containing vertices outside $F$ but not $v_1$ (\cref{fact:two-faces,fact:minimum}). Hence $P$ has exactly $d+3$ facets.

The case where $F$ is a simplex is ruled out. 

Now suppose $F$ has exactly $d+1$ facets and is nonsimple.  If $f_k(P)=\eta_k(2d+\ell,d)$ in \eqref{eq:3minus1-less-2dplus1-4}, then the number of $k$-faces containing $v_1$ is exactly $\theta_{k-1}(d-1+p,d-1)$ ($p\le d-1$),  and the number of $k$-faces containing vertices outside $F$ but not $v_1$ is exactly $\sum_{i=1}^{d+\ell-p}\binom{d-i}{k}$. It follows from  \cref{cor:more-dpluss} that $v_1$ is contained in exactly $d+1$ facets of $P$, and from \cref{fact:two-faces,fact:minimum} that $F_1$ is the unique facet containing vertices outside $F$ but not $v_1$. Hence $P$ has exactly $d+3$ facets.

Next, suppose $F$ has exactly $d+1$ facets and is simple. Then $F$ is a simplicial prism, and we may assume that  $F_1$  is also a simplicial prism intersecting $F$ in a $(d-2)$-simplex  $R$. It follows that $f_0(P)=3d-2$.

First, assume that a vertex in $R$, say $w_1$, is not simple in $P$, and let $F_{w_1}$ be the facet  intersecting $F$ in the simplicial $(d-2)$-prism of $F$ that omits  $w_1$.

Suppose that $F_{w_1}$ is a pyramid; it suffices to consider $v_1$ is the apex of $F_{w_1}$. By \eqref{eq:cl:3minus1-less-2dplus1-pyramid}, if $f_k(P)=\eta_k(3d-2,d)$, then  $f_k(P/v_1)=\theta_{k-1}(2(d-1),d-1)$, and the number of $k$-faces containing vertices in $F_1\setminus R$ but not $v_1$ is precisely $\sum_{i=1}^{d-1}\binom{d-i}{k}$. It follows that $v_1$ lies in exactly $d+1$ facets of $P$, and $F_1$ is the unique facet containing vertices in $F_1\setminus R$ but not $v_1$ (\cref{fact:two-faces,fact:minimum}). Hence $P$ has exactly $d+3$ facets.

Now suppose that $F_{w_1}$ is not a pyramid. It suffices to consider that $v_1\in F_{w_1}$. In \eqref{eq:cl:3minus1-less-2dplus1-nonpyramid}, if $f_k(P)=\eta_k(3d-2,d)$,  then $f_k(F_{w_1})=\eta_k(2(d-1)+d-3,d-1)$, $f_{k-1} (P/w_1)=\theta_{k-1}(d+1,d-1)$, and the number of $k$-faces containing vertices outside $F_{w_1}$ but not $w_1$ is exactly $\sum_{i=1}^{2}\binom{d-i}{k}$. It follows from \cref{thm:at-most-2d} that  $w_1$ lies in exactly $d+1$ facets, and from \cref{fact:two-faces,fact:minimum} that there is a unique facet containing vertices outside $F_{w_1}$ but not $w_1$. Hence $P$ has exactly $d+3$ facets.

Consequently, we may assume that every vertex in $R$ is simple in $P$. If  there exists another nonsimple vertex in $P$, say $u_1\ne v_1$, then the vertex $w_2\in R\setminus\set{w_1}$ and its $d-1$ neighbours in $F\cup F_1$ other than $w_1$ define a facet $F_{u_1}$ that does not contain $u_1$ or $w_1$. The same computation as in \eqref{eq:cl:3minus1-less-2dplus1-nonpyramid}, with $F_{u_1}$ replacing $F_{w_1}$ and $u_1$ replacing $w_1$, yields $f_k(P)\ge \eta_k(3d-2,d)$. In this case, the same reasoning shows that if  $f_k(P)= \eta_k(3d-2,d)$, then the analysis focussing on \eqref{eq:cl:3minus1-less-2dplus1-nonpyramid} again implies that $P$ has exactly $d+3$ facets.

Hence every vertex of $P$ except $v_1$ is simple. The inequality proof then shows that $P$ must be $\Sigma(d)$ (see \eqref{eq:Sigma_d}), which has exactly $d+3$ facets. This completes the proof of the equality part.
\end{proof}
\end{claim}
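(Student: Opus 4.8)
The plan is to establish the equality characterisation by induction on $d$, re-examining exactly the inequalities produced in \cref{cl:3minus1-pyramid,cl:3minus1-minus-2,cl:3minus1-more-2dplus1,cl:3minus1-less-2dplus1} and recording, for each that can be tight, the resulting facet count. I would run the induction over $d \ge 3$ for all $\ell \ge 1$; the cases $\ell = 1, 2$ are already settled in \cite{PinYos22,Xue24,PinTriYos24}, so only $\ell \ge 3$ needs attention. For $d = 3$, Euler's formula gives $f_1 = f_0 + f_2 - 2 \ge 13 > 12 = \eta_1(6+\ell, 3)$, so equality is impossible. For $d = 4$, the inequality argument itself shows that equality forces $P$ to be the simple $4$-polytope $J(4,4)$, which has $7 = d+3$ facets. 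This disposes of the base cases.

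For the inductive step ($d \ge 5$, $\ell \ge 3$) I would go through the four claims in turn. Several of the bounds (for instance those in Subcase~2.1 of \cref{cl:3minus1-more-2dplus1}, and the $r = 3$ analyses) are strict, so no equality case survives there. Where equality is possible, the two key mechanisms are: (a) if $f_{k-1}(P/v_1)$ equals a triplex value $\theta_{k-1}(\,\cdot\,,d-1)$ coming from \cref{cor:more-dpluss}, then $P/v_1$ must be that triplex, hence $v_1$ lies in exactly $d+1$ facets; and (b) if the vertices of $X$ lying outside the chosen facet $F$ (or outside $F \cup J_1$) are contained in the minimum possible number $\sum_i \binom{d+1-i}{k}$ of $k$-faces, then—using the elementary facts that two distinct facets through a vertex each carry a $k$-face through that vertex not lying in the other, and that a $(d+1-i)$-face minimises the number of $k$-faces through $v_i$ exactly when its vertex figure is a simplex—each $v_i$ with $i\ge 2$ contributes precisely one new facet, so there is a unique additional facet. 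Combining (a) and (b) with the facet $F$ pins the facet count at $d+3$ in the pyramid case, in the cases where $F$ has at least $d+2$ facets, in the nonsimple-facet case with $d+1$ facets, and in the ``no bad facet $J_1$'' branch of \cref{cl:3minus1-less-2dplus1}. I would also track, case by case, which $k$ actually permit equality (typically $k \ge d - \ell$, or $k = d-2$, or $k = d-3$); this is routine bookkeeping that does not affect the conclusion.

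The delicate part, which I expect to be the main obstacle, is the simplicial-prism subcase of \cref{cl:3minus1-less-2dplus1}: $F$ and $F_1$ are both simplicial $(d-1)$-prisms meeting in a $(d-2)$-simplex $R$, and $f_0(P) = 3d-2$. Here equality must force every vertex of $P$ except $v_1$ to be simple. I would prove this by iteratively producing, for any putative nonsimple vertex (first inside $R$, then in $(F \cup F_1) \setminus R$), a third facet that is either a pyramid—in which case the bound \eqref{eq:cl:3minus1-less-2dplus1-pyramid} applies—or has at least $d+2$ facets—in which case \eqref{eq:cl:3minus1-less-2dplus1-nonpyramid} applies; ruling out the simple alternative uses that the unique simple $(d-1)$-polytope with $3(d-1)-2$ vertices is $T(3) \times T(3)$, which has no simplicial-prism facet. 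Once $P$ has a single nonsimple vertex, the Excess theorem (\cref{thm:excess-degree}) forces $\deg_P(v_1) = 2d-2$, the graph of $P$ is then determined, and the graph-rigidity result of \cite{DPUY2019} identifies $P$ with $\Sigma(d)$ (see \eqref{eq:Sigma_d}), which has exactly $d+3$ facets, completing the proof.
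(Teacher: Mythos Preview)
Your plan is correct and essentially identical to the paper's proof: same induction setup, same base cases, same case-by-case re-reading of \cref{cl:3minus1-pyramid,cl:3minus1-minus-2,cl:3minus1-more-2dplus1,cl:3minus1-less-2dplus1}, same two mechanisms (vertex-figure equality from \cref{cor:more-dpluss} pinning the $d+1$ facets through $v_1$; minimality of $\sum_i\binom{d+1-i}{k}$ pinning the remaining facets via the two elementary facts you name, which are exactly the paper's \cref{fact:two-faces,fact:minimum}), and the same endgame identifying $\Sigma(d)$ through the Excess theorem and graph rigidity. Two cosmetic fixes for the write-up: the facet $J_1$ belongs to \cref{cl:3minus1-more-2dplus1} rather than \cref{cl:3minus1-less-2dplus1}, and in mechanism~(b) the precise statement is that $v_2$ alone contributes one new facet while each $v_i$ with $i\ge 3$ contributes none---so the total beyond $F$ and the facets through $v_1$ is exactly one.
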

The proof of the theorem is concluded. 

\section{Conclusions}
\label{sec:conclusions}

Having established a lower bound for $d$-polytopes with up to $3d - 1$ vertices, one may ask what comes next. While the bound extends to polytopes with at least $3d - 1$ vertices, it is no longer tight.   The role of $d$-polytopes with $d + 2$ facets persists in lower bound statements beyond this vertex range. As is well known, the problem becomes more interesting when restricted to polytopes with at least $d + 3$ facets.

Despite the length of the present paper, the lower bound itself arises in a relatively straightforward manner—from truncations of triplexes, which are the minimisers among polytopes with at most $2d$ vertices.   In contrast, when extending to polytopes with more vertices, say between $3d$ and $4d-4$ vertices, the situation becomes more intricate. 

Our experience suggests that truncating faces of known minimisers often yields new minimisers. For $d\ge4$ and $0\le k \le d-1$,  we identify  five potential examples.

\begin{enumerate}
    \item A $d$-polytope obtained by truncating a simple edge (an edge whose endpoints are both simple vertices) from an $(s_1,d-s_1)$-triplex  ($2\le s_1\le d$); that is,  a $(d-s_1)$-fold pyramid over $T(1)\times T(s_1-1)$.  The resulting number of $k$-faces is  
    \begin{equation*}
        \label{eq:pot-minimiser-1}
        f_1(d,s_1,k):=\binom{d+1}{k+1}+2\binom{d}{k+1}+\binom{d-1}{k+1}-\binom{1}{k+1}-\binom{d+1-s_1}{k+1}-\binom{2}{k+1}.
    \end{equation*}
    In particular, these polytopes have $d+3$ facets and between $3d-2$ and $4d-4$ vertices.  
    \item A $d$-polytope obtained by truncating a simple vertex from a $(d-s_2)$-fold pyramid over $T(2)\times T(s_2-2)$ ($3\le s_2\le d$). The resulting number of $k$-faces is 
        \begin{equation*}
        \label{eq:pot-minimiser-2}
    f_2(d,s_2,k):=\binom{d+1}{k+1}+2\binom{d}{k+1}+\binom{d-1}{k+1}-\binom{1}{k+1}-\binom{d+1-s_2}{k+1}-\binom{d+2-s_2}{k+1} 
    \end{equation*}
    In particular, these polytopes have $d+3$ facets and  between $2d+4$ and $4d-4$ vertices.  
\item A $d$-polytope obtained by truncating a simple vertex from $J(s_3+1,d)$  ($1\le s_3\le d-3$). 
\begin{equation*}
        \label{eq:pot-minimiser-3}
    f_3(d,s_3,k):=\binom{d+1}{k+1}+3\binom{d}{k+1}-\binom{d-s_3}{k+1}-\binom{2}{k+1}.
\end{equation*}
In particular, these polytopes have have $d+4$ facets and  between $3d$ and $4d-4$ vertices.
\item A $d$-polytope obtained by truncating a nonsimple vertex from an $(s_4,d-s_4)$-triplex ($2\le s_4\le d-2$). 
        \begin{equation*}
        \label{eq:pot-minimiser-4}
    f_4(d,s_4,k):=\binom{d+1}{k+1}+2\binom{d}{k+1}+\binom{d-1}{k+1}-\binom{1}{k+1}-\binom{d+1-s_4}{k+1}-\binom{d-s_4}{k+1} 
    \end{equation*}
In particular, these polytopes have $d+3$ facets and  between $2d+2$ and $4d-4$ vertices.
\item A $d$-polytope obtained by truncating a \emph{simple triangle} (a triangular face whose vertices are simple) from an $(s_5,d-s_5)$-triplex ($3\le s_5\le 5$).   
\begin{equation*}
        \label{eq:pot-minimiser-5}
f_5(d,s_5,k):=\binom{d+1}{k+1}+2\binom{d}{k+1}+\binom{d-1}{k+1}-\binom{4}{k+2}-\binom{d+1-s_5}{k+1}+\binom{d-2}{k+1}.
\end{equation*}
In particular, these polytopes have $d+3$ facets and  between $4d-6$ and $4d-4$ vertices.
\end{enumerate}

The next lemma analyses these five functions.
\begin{lemma} 
\label[lemma]{lem:function-analysis}
Let $d\ge 4$, $2\le s_1\le d$, $3\le s_2\le d$, $1\le s_3\le d-3$, $2\le s_4\le d-2$, $3\le s_5\le 5$, and $0\le k\le d-1$. The following holds.
   \begin{enumerate}[{\rm (i)}]
   \item If $f_1(d,s_1,0)=f_2(d,s_2,0)$, then $f_1(d,s_1,k)\le f_2(d,s_2,k)$  for each $k\in [1\ldots d-1]$. 
       \item If $f_2(d,s_2,0)=f_4(d,s_4,0)$, then $f_2(d,s_2,k)=f_4(d,s_4,k)$ for each $k\in [1\ldots d-1]$. 
       \item    If $f_1(d,s_1,0)=f_3(d,s_3,0)$, then $f_1(d,s_1,k)-f_3(d,s_3,k)$  can be positive, zero, or negative depending on $(d,s_3,k)$.
       \item If $f_1(d,s_1,0)=f_5(d,s_5,0)$, then, for   each $k\in [1\ldots d-2 ]$, we have the following.
       \begin{enumerate}
           \item If $s_5=3$ and $d\ge 4$, then  $f_1(d,s_1,k)-f_5(d,s_5,k)=0$.
           \item If $s_5=4,5$ and $d\ge 5$, then $f_1(d,s_1,k)\le f_5(d,s_5,k)$ .
           \item If $s_5=4,5$ and $d= 4$, then $f_1> f_5$.
       \end{enumerate}
       \item  If $f_3(4,s_3,0)=f_5(4,s_5,0)$ and $s_5=5$, then $f_3(4,s_3,k)>f_5(4,s_5,k)$ for $k=1,2$. Moreover, if $s_5=4$, then $f_3(4,s_3,0)\ne f_5(4,4,0)$.
   \end{enumerate} 
\end{lemma}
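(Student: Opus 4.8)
The plan is to reduce every item to an elementary comparison of sums of binomial coefficients. The first step is to evaluate the five functions at $k=0$ (using $\binom{n}{1}=n$ and $\binom{4}{2}=6$), which gives the vertex counts $f_1(d,s_1,0)=3d+s_1-4$, $f_2(d,s_2,0)=2d+2s_2-4$, $f_3(d,s_3,0)=3d+s_3-1$, $f_4(d,s_4,0)=2d+2s_4-2$, and $f_5(d,s_5,0)=4d+s_5-9$. Each equality hypothesis then ties the two shape parameters together by a linear relation: $s_1=2s_2-d$ in (i), $s_2=s_4+1$ in (ii), $s_1=s_3+3$ in (iii), $s_1=s_5+d-5$ in (iv), and $s_3=1$ in (v). Two of the claims are then immediate. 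For (ii), one checks directly from the defining formulas the identity $f_2(d,s,k)=f_4(d,s-1,k)$, valid for all $d,s,k$; since $s\mapsto f_4(d,s,0)$ is strictly increasing, the hypothesis forces $s_4=s_2-1$ and hence $f_2(d,s_2,k)=f_4(d,s_4,k)$ for every $k$. For the ``moreover'' clause of (v), $f_5(4,4,0)=11$ while $f_3(4,s_3,0)=11+s_3\ge 12$ on the admissible range $1\le s_3\le d-3=1$, so the two counts cannot coincide.

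For the remaining comparisons I would substitute the linear relation into $f_i(d,s_i,k)-f_j(d,s_j,k)$; the ``large'' binomials $\binom{d+1}{k+1}$, $\binom{d}{k+1}$, $\binom{d-1}{k+1}$ cancel up to controlled amounts, leaving a difference of a bounded number of binomial coefficients whose upper arguments are small constants in $\{0,1,2,3,4\}$ or of the form $d-O(1)$. The tools for the sign are Pascal's rule, the hockey-stick identity $\binom{n}{k+1}-\binom{m}{k+1}=\sum_{i=m}^{n-1}\binom{i}{k}$ (for $m\le n$), the monotonicity $\binom{a}{k}\le\binom{b}{k}$ for $a\le b$, and the Vandermonde estimate $\binom{2a+1}{k+1}\ge\binom{a+1}{k+1}+\binom{a}{k+1}$ already used in the paper. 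Concretely: in (iv)(a) with $s_5=3$ the surviving terms telescope by Pascal to $\binom{2}{k+2}$, which vanishes for $k\ge1$; in (iv)(b), setting $s_5\in\{4,5\}$ reduces the difference to $-\binom{d-3}{k}$ (resp.\ $-\binom{d-3}{k}-\binom{d-4}{k}$) plus the small terms $-2\binom{2}{k+1}+\binom{4}{k+2}$ (resp.\ $-\binom{2}{k+1}+\binom{4}{k+2}$), and one checks this is $\le 0$ for all $k\in[1\ldots d-2]$ and $d\ge5$ (its value at $k=1$ being $5-d$, resp.\ $10-2d$, while for $k\ge3$ it reduces to $-\binom{d-3}{k}$ or $-\binom{d-3}{k}-\binom{d-4}{k}$). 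In (i), writing $a:=d-s_2\ge0$, the inequality to be proved is $\binom{2a+1}{k+1}+\binom{2}{k+1}\ge\binom{a+1}{k+1}+\binom{a+2}{k+1}$, which is the crux of the lemma (discussed below).

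The existential and small-dimension items are finished by direct evaluation of the now-explicit expressions. For (iii), substituting $s_1=s_3+3$ and simplifying gives $f_1-f_3=-\binom{d-1}{k}+\binom{d-s_3-1}{k}+\binom{d-s_3-2}{k}$ for $k\ge1$; then the triples $(d,s_3,k)=(6,1,1)$, $(4,1,1)$, $(5,1,3)$ give values $2$, $0$, $-3$ respectively, each with $2\le s_1=s_3+3\le d$ and $k\in[1\ldots d-2]$, so all three signs occur. For (iv)(c) one puts $d=4$, $s_5\in\{4,5\}$, $k\in\{1,2\}$ into the formula from the previous paragraph and obtains values in $\{1,2\}$, all positive. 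For the first part of (v), $s_3=1$ and $s_5=5$ (so $\binom{d+1-s_5}{k+1}=\binom{0}{k+1}=0$) reduce $f_3-f_5$ at $d=4$ to $\binom{4}{k+1}-2\binom{3}{k+1}-2\binom{2}{k+1}+\binom{4}{k+2}$, which equals $2$ at $k=1$ and $3$ at $k=2$.

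What remains is bookkeeping together with one genuine inequality. The bookkeeping is to confirm that each linear relation places the dependent parameter inside its prescribed interval whenever the independent one lies in its own (e.g.\ $s_1=2s_2-d\in[2,d]$ exactly when $(d+2)/2\le s_2\le d$, so the hypothesis of (i) is vacuous otherwise), and that the witnesses used in (iii)--(v) are admissible. The main obstacle is the termwise comparison underlying (i): for $a\ge2$ one writes $\binom{2a+1}{k+1}-\binom{a+1}{k+1}=\sum_{i=a+1}^{2a}\binom{i}{k}$ and $\binom{a+2}{k+1}-\binom{2}{k+1}=\sum_{i=2}^{a+1}\binom{i}{k}$, pairs the $j$-th summand $\binom{a+j}{k}$ of the first against the $j$-th summand $\binom{j+1}{k}$ of the second (for $j=1,\dots,a$), and invokes $a+j\ge j+1$; the cases $a=0,1$ are checked separately and give equality, consistent with the non-strict bound in (i). A minor caveat is the boundary $d=5$ in (iv)(b), where the large binomials degenerate and $f_1-f_5$ in fact vanishes for every $k\in[1\ldots d-2]$, so one records that equality is allowed there.
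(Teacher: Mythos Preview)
Your approach is the same as the paper's: extract the linear relation between the shape parameters from equality at $k=0$, substitute it into $f_i-f_j$, and reduce to a comparison of a bounded number of binomial coefficients. The paper is terser---for (i) it simply invokes the convexity inequality $\binom{a}{c}+\binom{b}{c}\ge\binom{\lfloor S/2\rfloor}{c}+\binom{\lceil S/2\rceil}{c}$ (its \cref{lem:combinatorial-identities}(vi)) applied to $(a,b)=(2a+1,2)$ versus $(a+1,a+2)$, which is exactly what your hockey-stick pairing argument proves directly; for (iii) it observes that already at $k=1$ the difference $f_1-f_3$ is linear in $d,s_3$ and changes sign, rather than exhibiting three triples. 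One tiny slip: in (iv)(a) the Pascal telescoping ends at $\binom{1}{k+2}$, not $\binom{2}{k+2}$, though both vanish for $k\ge 1$ so your conclusion is unaffected.
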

\begin{proof}
(i)  Equality at $k=0$ forces $s_1=2s_2-d$. Then \cref{lem:combinatorial-identities}(vi) implies $f_1(d,s_1,k)-f_2(d,s_2,k)\le 0$.

    (ii) Equality at $k=0$ forces $s_2=s_4+1$,  yielding the result. 

    (iii) The condition $f_1(d,s_1,0)=f_3(d,s_3,0)$ gives
$s_1=s_3+3$. For $k=1$, we already see the variation in sign of $f_1-f_3$, since  $f_1-f_3=d-2s_3-3$.

(iv)  Equality at $k=0$ forces $s_1=s_5+d-5$. For each possible value of $s_5$, consider  $ f_1(d,s_1,k)-f_5(d,s_5,k)$:
\begin{equation*}
\binom{d+1-s_5}{k+1}-\binom{d-2}{k+1}
+\binom{4}{k+2}
-\binom{6-s_5}{k+1}-\binom{1}{k+1}-\binom{2}{k+1}. 
\end{equation*}
If $s_5=3$, then  $f_1(d,s_1,k)-f_5(d,s_5,k)=0$ for all $k\in [1\ldots d-2 ]$.
If $s_5=4,5$  and $d\ge 5$, then $f_1\le f_5$ for all $k$, with equality at $k=d-2,d-1$ (and at $d=5,\,k=1,2$) and strict inequality otherwise. If instead $d= 4$ and $s_5=4,5$, then $f_1> f_5$ for $k=1,2$.

(v) The part about $s_5=5$ follows directly by checking the sign of $f_3(4,s_3,k)-f_5(4,s_5,k)$.
\end{proof}

 In light of \cref{lem:function-analysis}, we make  the following conjecture for $d$-polytopes with  at least $d+3$ facets and between $3d$ and $4d-4$ vertices. 
 \begin{itemize}
     \item For $d\ge 5$, the minimum number of faces swaps between $f_1$ and $f_3$.
     \item If $d=4$ and $4d-4=3d$ vertices, then the minimum number of faces is given by $f_5$. 
 \end{itemize}

\section{Combinatorial identities and proofs}
We start with a combinatorial lemma whose first three expressions are from \cite{Xue21}.
\begin{lemma}[Combinatorial equalities and inequalities] For all integers $d\ge 2$, $k\in [1\ldots d-1]$, the following hold
\label[lemma]{lem:combinatorial-identities}
\begin{enumerate}[{\rm (i)}] 
\item If $2\le r\le s\le d$ are integers, then $\theta_{k}(d+s-r,d-1)+\sum_{i=1}^{r}\binom{d+1-i}{k}\ge \theta_{k}(d+s,d)$, with equality only if $r=2$ or $r=s$.
\item If $n\ge c$ are positive integers, then $\binom{n}{c}=\binom{n-1}{c-1}+\binom{n-1}{c}$.
\item If $n\ge c$ and $n\ge a$  are positive integers, then  $\binom{n}{c}-\binom{n-a}{c}=\sum_{i=1}^{a}\binom{n-i}{c-1}$.
\item If $n,c$ are positive integers, then  $\binom{n}{c}=\sum_{i=1}^{n}\binom{n-i}{c-1}$.
\item (Vandermonde's identity) If $n,a,c$ are nonnegative integers, then \[\binom{n+a}{c}=
\sum_{i=0}^{c}\binom{n}{i}\binom{a}{c-i}.\]  
\item If  $a,b,c,S\ge 0$ are integers such that $a+b=S$, then
\[\binom{a}{c}+\binom{b}{c} \ge \binom{\lfloor S/2\rfloor}{c}+\binom{\lceil S/2\rceil}{c}.\]
\item If $d\ge 5$, $1\le \ell\le d-4$, $a=\floor{(d+\ell)/2}+1$, $k\in [1\ldots d-2]$. Then 
\begin{equation*}
    \binom{d-(\ell+1)}{k+1}-\binom{d-a+1}{k+1}-\binom{d-a}{k+1} \ge 0.
\end{equation*} 
Furthermore, if $k< d-\ell-1$, then the inequality is strict.
\end{enumerate}  
\end{lemma}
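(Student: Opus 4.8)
The plan is to handle the seven items in order of difficulty, using the elementary ones as tools for the rest. Items (ii)--(v) are classical and I would dispatch each in a line: (ii) is Pascal's rule; (iii) telescopes from $\binom{n}{c}-\binom{n-1}{c}=\binom{n-1}{c-1}$; (iv) is the $a=n$ case of (iii), read with the convention $\binom{m}{c}=0$ for $0\le m<c$; and (v) is the Vandermonde convolution. For (vi) I would first prove the single swap $\binom{a}{c}+\binom{b}{c}\ge\binom{a-1}{c}+\binom{b+1}{c}$ for $a\ge b+2$, which by (ii) is the same as $\binom{a-1}{c-1}\ge\binom{b}{c-1}$ and holds since $a-1\ge b$; iterating the swap toward the balanced split $(\floor{S/2},\ceil{S/2})$ proves (vi). From this I would extract the form actually needed below: if $p+q=p'+q'$ with $p,q,p',q'\ge 0$ integers and $|p-q|\ge|p'-q'|$, then $\binom{p}{c}+\binom{q}{c}\ge\binom{p'}{c}+\binom{q'}{c}$, strictly once $|p-q|>|p'-q'|$ and $\max(p,q)\ge c$.

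For (i), the plan is to put both sides in closed form. Expanding $\theta_k(d+s-r,d-1)=\binom{d}{k+1}+\binom{d-1}{k+1}-\binom{d-s+r-1}{k+1}$ and collapsing $\sum_{i=1}^{r}\binom{d+1-i}{k}=\binom{d+1}{k+1}-\binom{d+1-r}{k+1}$ by (iii), the inequality reduces, after cancelling the common terms of $\theta_k(d+s,d)=\binom{d+1}{k+1}+\binom{d}{k+1}-\binom{d+1-s}{k+1}$, to
\[\binom{d-1}{k+1}+\binom{d+1-s}{k+1}\ \ge\ \binom{d-s+r-1}{k+1}+\binom{d+1-r}{k+1}.\]
Here both argument pairs sum to $2d-s$; the pair on the left differs by $s-2$, the pair on the right by $|2r-s-2|$, and $2\le r\le s$ gives $|2r-s-2|\le s-2$ with equality exactly when $r=2$ or $r=s$. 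The corollary to (vi) then yields the inequality together with its equality clause.

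For (vii), I would split on the parity of $d+\ell$, with $a=\floor{(d+\ell)/2}+1$. In the even case, setting $t=(d-\ell)/2$ gives $d-a+1=t$, $d-a=t-1$, $d-\ell-1=2t-1$, so the claim becomes $\binom{2t-1}{k+1}\ge\binom{t}{k+1}+\binom{t-1}{k+1}$; in the odd case, setting $u=(d-\ell-1)/2$ gives $d-a+1=u+1$, $d-a=u$, $d-\ell-1=2u$, so the claim becomes $\binom{2u}{k+1}\ge\binom{u+1}{k+1}+\binom{u}{k+1}$. From $\ell\le d-4$ we get $t\ge 2$ and $u\ge 2$. In the even case, retaining only the $i=0$ and $i=k+1$ terms of the Vandermonde expansion $\binom{2t-1}{k+1}=\binom{t+(t-1)}{k+1}=\sum_i\binom{t}{i}\binom{t-1}{k+1-i}$ already produces $\binom{t-1}{k+1}+\binom{t}{k+1}$. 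In the odd case, rewriting $\binom{u+1}{k+1}+\binom{u}{k+1}=2\binom{u}{k+1}+\binom{u}{k}$ by (ii) and retaining the $i=0,1,k+1$ terms of $\binom{2u}{k+1}=\binom{u+u}{k+1}=\sum_i\binom{u}{i}\binom{u}{k+1-i}$ produces $2\binom{u}{k+1}+u\binom{u}{k}\ge 2\binom{u}{k+1}+\binom{u}{k}$; here $k\ge 1$ keeps the three indices distinct and $u\ge 1$ gives the final step. For the strict clause, when $k<d-\ell-1$ one checks that at least one further term of the relevant Vandermonde sum (an index $i\in[1,k]$) is strictly positive; when $k\ge d-\ell-1$ both sides vanish, which is consistent with the ``$\ge$'' statement.

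There is no single deep obstacle, since everything is elementary; the work is entirely bookkeeping. In particular I would be careful about: (a) consistently using $\binom{m}{c}=0$ for $m<c$, because arguments such as $d-\ell-1$, $d-a$ and $d+1-s$ may be small; (b) getting the equality clauses of (i) and (vii) exactly right, which comes down to counting how many swaps in (vi), or how many surviving Vandermonde terms in (vii), are genuinely used; and (c) organising the parity split in (vii), where the even and odd cases reduce to superficially different inequalities that nonetheless both follow from the same Vandermonde estimate.
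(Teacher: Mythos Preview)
Your proposal is correct and, for items (ii)--(vii), takes essentially the same route as the paper: Pascal and its telescoping consequences for (ii)--(iv), the swap/induction argument for (vi), and a parity split plus Vandermonde for (vii). The only substantive difference is in (vii), odd case, where you decompose $2u=u+u$ and compare against $2\binom{u}{k+1}+\binom{u}{k}$, while the paper decomposes $2p=(p+1)+(p-1)$ and picks off the $i=0,1,k+1$ terms directly; both yield the same estimate.

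For (i) your argument is in fact more informative than the paper's, which simply cites \cite{Xue21} and \cite{Pin24}. Your reduction to the convexity inequality (vi) is clean: after the telescoping of $\sum_{i=1}^{r}\binom{d+1-i}{k}$, both sides become sums of two binomials with the same argument total $2d-s$, and the spread comparison $|2r-s-2|\le s-2$ for $2\le r\le s$ gives exactly the inequality and the equality cases $r\in\{2,s\}$. One small point worth making explicit: your strictness clause for the corollary to (vi) requires $\max(p,q)\ge c$, here $d-1\ge k+1$; this means the ``only if'' part of (i) is established for $k\le d-2$, which is the range actually used in the paper. At $k=d-1$ all four binomials vanish and equality is automatic, so the statement's equality clause should be read in that light.
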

\begin{proof}

The proof of (i) is embedded in the proof of \cite[Thm.~3.2]{Xue21}. The identity (i) is spelled out in \cite[Claim 1]{Pin24}. Repeated applications of (ii) yield (iii) and (iv); (v) is well known. (vi) Suppose $a\ge b$. The lemma is trivial for $a-b\le 1$ and easy for $a-b\le 2$. 
Complete the proof by induction on $a-b\ge 2$.

(vii) Let $C:=\binom{d-(\ell+1)}{k+1}-\binom{d-a+1}{k+1}-\binom{d-a}{k+1}$. If $d+\ell$ is even, then $d-(\ell+1)=d-a+1+d-a$, giving that $C\ge 0$, where equality holds if and only if $k\ge d-\ell-1$. If $d+\ell$ is odd, then $d-\ell-1$ is even, say $d-\ell-1=2p\ge4 $, and $d-a=(d-\ell-1)/2$. Thus, $C$ becomes
\begin{align*}
    C=\binom{2p}{k+1}-\binom{p+1}{k+1}-\binom{p}{k+1}.
\end{align*}

If $p\le k\le 2p-1$, then $C>0$. So assume that $k\le p-1$. By Vandermonde's identity on $\binom{2p}{k+1}$, 
\begin{align*}
    C&=\binom{2p}{k+1}-\binom{p+1}{k+1}-\binom{p}{k+1}\\
     &\ge \binom{p+1}{k+1}+\binom{p-1}{k+1}+(p+1)\binom{p-1}{k}-\binom{p+1}{k+1}-\binom{p}{k+1}=p\binom{p-1}{k}> 0.
\end{align*}
This complets the proof of the part.
\end{proof}

\begin{lemma}The following inequalities hold.
\begin{enumerate}[{\rm (i) }]
\item If  $d\ge 4$ and each $1\le k\le d-2$, then 
\begin{equation*}
\tau_{k}(3d-3,d)\le \tau_{k}(3d-2,d). 
\end{equation*}
\item If  $d\ge 5$, $\ell\ge 1$, and  $2\le k \le d-2$, then
\begin{equation*}
\begin{aligned}
\ff_{k}(2d+\ell,d)&<\hh_{k}(2(d-1)+\ell,d-1)+\ff_{k-1}(2(d-1)+\ell,d-1)+\binom{d-1}{k}\\
&\quad +\binom{d-2}{k}-\binom{d-4}{k}.
\end{aligned}
\end{equation*}
\item If  $d\ge 5$ and $\ell\ge 1$, then 
\begin{equation*}
\ff_{1}(2d+\ell,d)< \hh_{1}(2(d-1)+\ell,d-1)+2(d-1)+\ell+d+1.  
\end{equation*} 
\item If  $d\ge 5$, $\ell\ge 1$, and  $2\le k \le d-2$, then 
\begin{equation*}
\begin{aligned}
\ff_{k}(2d+\ell,d)&< \hh_{k}(2(d-1)+\ell,d-1)+\hh_{k-1}(2(d-1)+\ell,d-1)+\binom{d-1}{k}\\
&\quad +\binom{d-2}{k}-\binom{d-4}{k}.
\end{aligned}
\end{equation*}
\item If  $d\ge 5$,  $\ell\ge 3$,  and $3\le r\le \ell+1$, then  
\begin{equation*}
\ff_{1}(2d+\ell,d)< \hh_{1}(2(d-1)+\ell-r+2,d-1)+2(d-1)+\ell-r+2+\sum_{i=1}^{r-1}d-i.
\end{equation*} 

\item If  $d\ge 5$, $\ell\ge 3$, $3\le r\le \ell+1$, and  $2\le k \le d-2$, then 
\begin{equation*}
\ff_{k}(2d+\ell,d)< \hh_{k}(2(d-1)+\ell-r+2,d-1)+\eta_{k-1}(2(d-1)+\ell-r+2,d-1)+\sum_{i=1}^{r-1}\binom{d-i}{k}. 
\end{equation*}
\item If  $d\ge 5$, $\ell\ge 3$, $3\le r\le \ell+1$, and  $2\le k \le d-2$, then
\begin{equation*}
\ff_{k}(2d+\ell,d)<\hh_{k}(2(d-1)+\ell-r+2,d-1)+\tau_{k-1}(2(d-1)+\ell-r+2,d-1)+\sum_{i=1}^{r-1}\binom{d-i}{k}. 
\end{equation*}
\end{enumerate}
\label[lemma]{lem:combinatorial-ineq}
\end{lemma}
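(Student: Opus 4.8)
The seven inequalities in \cref{lem:combinatorial-ineq} are all of the same flavour: each asserts that a certain combination of the functions $\eta_k$, $\tau_k$ (and occasionally $\theta_k$, binomial sums) at parameters $(2(d-1)+\cdots,d-1)$ plus lower-order correction terms exceeds or equals $\eta_k(2d+\ell,d)$. The unifying strategy is to expand every $\eta$- and $\tau$-value using its closed form (the display after $J(\ell+1,d)$ is introduced for $\eta_k$; \cref{lem:lower-bound-dplus2-facets} for $\tau_k$, and \eqref{eq:dplus2facets} for $\rho_k$), then collapse the binomial sums via the Pascal-type identities of \cref{lem:combinatorial-identities}(ii)--(iv), so that the claimed inequality reduces to the nonnegativity (or positivity) of a short alternating sum of binomial coefficients. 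The main tool for the final sign check is \cref{lem:combinatorial-identities}(vi) (the convexity/majorisation inequality $\binom{a}{c}+\binom{b}{c}\ge\binom{\lfloor S/2\rfloor}{c}+\binom{\lceil S/2\rceil}{c}$), applied to the two ``removed'' binomial coefficients coming from $\tau_k=\rho_k(a,2,d)$ with $a=\lfloor(d+\ell)/2\rfloor+1$ or $a=\lfloor(d+\ell+1)/2\rfloor+1$; part (vii) of \cref{lem:combinatorial-identities} is precisely the inequality packaged for reuse here.

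For parts (i)--(iv) (where the facet $F$ omits exactly two vertices, so $d'=d-1$ inherits $2(d-1)+\ell$ vertices), I would first handle the equality statement (i) by noting $\tau_k(3d-3,d)$ and $\tau_k(3d-2,d)$ differ only through the parameter $a$, and use \cref{lem:dplus2-vertices-inequalities}(ii) together with the monotonicity of $a\mapsto\lfloor(d+\ell)/2\rfloor+1$ as $\ell$ increases from $d-3$ to $d-2$ — essentially \cref{rmk:functions-dplus2-facets}. For (ii)--(iv), after substituting the closed forms, the difference ``right-hand side minus $\eta_k(2d+\ell,d)$'' telescopes: the leading terms $\binom{d+1}{k+1}+2\binom{d}{k+1}$ cancel exactly by Pascal applied to the $(d-1)$-indexed terms plus the explicit $\binom{d-1}{k}+\binom{d-2}{k}-\binom{d-4}{k}$ corrections (these come from $\theta_{k-1}(d+1,d-2)$ in \eqref{eq:claim2_subcase2.1-dplus2}/\eqref{eq:claim2_subcase2.1-dplus1}), leaving a residual involving $-\binom{d-\ell}{k+1}$ against the $\tau$-contributed negative terms $-\binom{d-a+2}{k+1}-\binom{d-a+1}{k+1}$; nonnegativity of this residual is exactly \cref{lem:combinatorial-identities}(vii) (for (ii), with a single $\tau$ factor, it is even easier since only one pair $-\binom{d-a+2}{k+1}-\binom{d-a+1}{k+1}$ appears). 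Strictness for $k=1$ in (iii) comes from the extra ``$+1$'' slack in $\eta_1(2d+\ell+1,d)-1\ge\eta_1(2d+\ell,d)$ combined with the $d+1$ term; strictness in (ii),(iv) for the relevant $k$ follows from the second sentence of \cref{lem:combinatorial-identities}(vii).

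Parts (v)--(vii) are the analogues when $F$ omits $r\ge 3$ vertices, so the inherited vertex count at dimension $d-1$ drops to $2(d-1)+\ell-r+2$; the same expansion works with $\ell$ replaced by $\ell-r+2$ throughout, and the sum $\sum_{i=1}^{r-1}\binom{d-i}{k}$ (bounding the faces through the $r-1$ further outside vertices via \cref{cor:number-faces-outside-facet-practical}) replaces the single $\theta_{k-1}(d+1,d-2)$ term. After telescoping, the residual again has the shape ``$-\binom{d-\ell}{k+1}$ plus positive binomial tails minus the two $\tau$-removed terms $\binom{d-a'+2}{k+1}+\binom{d-a'+1}{k+1}$ with $a'=\lfloor(d+\ell-r+2)/2\rfloor+1$''; here one observes $d-(\ell-r+2)\ge d-\ell$ is false in general, so the extra positivity must be extracted from $\sum_{i=1}^{r-1}\binom{d-i}{k}$ — concretely, after pulling out one copy of $\binom{d-\ell}{k+1}$ via \cref{lem:combinatorial-identities}(iii), what remains is dominated term-by-term because $r\ge 3$ guarantees enough terms in the sum, and the final sign is again sealed by \cref{lem:combinatorial-identities}(vi)/(vii) applied with $S=d-(\ell-r+2)-1$. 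I expect the bookkeeping in (vi) and (vii) — keeping track of which binomials telescope against the length-$(r-1)$ sum, and verifying the inequality is strict across the full range $3\le r\le \ell+1$, $2\le k\le d-2$, $d\ge 5$ — to be the main obstacle; the cleanest route is to fix the substitution $\ell'=\ell-r+2$, prove a single master inequality in $(d,\ell',r,k)$, and then note that (ii)--(iv) are the special case $r=2$ (equivalently $\ell'=\ell$) while (v)--(vii) are $r\ge 3$, so essentially one computation, done carefully once, disposes of all seven items.
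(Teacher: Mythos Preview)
Your plan for parts (ii)--(vii) is essentially the paper's approach: expand the closed forms, telescope via Pascal, and invoke \cref{lem:combinatorial-identities}(vii) (with the substitution $\ell'=\ell-r+2$ for (v)--(vii)) for the final sign check. The residuals you describe match the paper's computations up to harmless index/sign slips (for instance, in (ii) the surviving term is $+\binom{d-(\ell+1)}{k+1}$, not $-\binom{d-\ell}{k+1}$, but your intended application of \cref{lem:combinatorial-identities}(vii) is exactly right).

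Part (i), however, has a genuine gap. You assert that $\tau_k(3d-3,d)$ and $\tau_k(3d-2,d)$ ``differ only through the parameter $a$'' and propose to use \cref{lem:dplus2-vertices-inequalities}(ii) via monotonicity in $a$. This is false: $\tau_k(3d-3,d)$ comes from \cref{lem:lower-bound-dplus2-facets} with $m=2$ and $a=d$, whereas $\tau_k(3d-2,d)$ comes from \cref{lem:lower-bound-dplus2-facets-extra} with $m=3$ and $a=d+2-\lfloor d/3\rfloor$. So both $m$ and $a$ change, and in opposite directions for the inequalities in \cref{lem:dplus2-vertices-inequalities} (increasing $m$ pushes face counts up, decreasing $a$ pushes them down), so a simple chain does not work. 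Moreover, appealing to \cref{rmk:functions-dplus2-facets} is circular: that remark explicitly defers the first inequality to \cref{lem:combinatorial-ineq}(i). The paper instead writes out
\[
\tau_k(3d-2,d)-\tau_k(3d-3,d)=\binom{d-2}{k+1}-\binom{\lfloor d/3\rfloor+1}{k+1}-\binom{\lfloor d/3\rfloor}{k+1}-\binom{\lfloor d/3\rfloor-1}{k+1}+\binom{2}{k+1}
\]
and proves $\binom{d-2}{k+1}\ge 3\binom{\lfloor d/3\rfloor+1}{k+1}$ for $d\ge 9$ by a direct ratio estimate, checking small $d$ separately. You need a replacement for this step; nothing in your current plan supplies it.
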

\begin{proof} (i) We state the functions $\hh_{k}(3d-3,d)$ and $\hh_{k}(3d-2,d)$ (see \cref{lem:lower-bound-dplus2-facets} and \cref{lem:lower-bound-dplus2-facets-extra}): 
\begin{align*}
\hh_{k}(3d-3,d)&=\binom{d+1}{k+1}+\binom{d}{k+1}+\binom{d-1}{k+1} -\binom{2}{k+1}-\binom{1}{k+1}\\
\hh_{k}(3d-2,d)&=\binom{d+1}{k+1}+\binom{d}{k+1}+\binom{d-1}{k+1}+\binom{d-2}{k+1} -\binom{\floor{d/3}+1}{k+1}\\
&\quad -\binom{\floor{d/3}}{k+1}-\binom{\floor{d/3}-1}{k+1}
\end{align*}
From the expressions, we find that $\hh_{d-2}(3d-3,d)=\hh_{d-2}(3d-2,d)$. We assume $1\le k\le d-3$. It is not difficult to verify the statement for $d=4,5,6,7,8$. From now on, we assume $d\ge 9$.
We compute $\f_{k}(d):=\tau_{k}(3d-2,d)-\tau_{k}(3d-3,d)$:
\begin{align*}
\f_{k}(d)=\binom{d-2}{k+1} -\binom{\floor{d/3}+1}{k+1}
 -\binom{\floor{d/3}}{k+1}-\binom{\floor{d/3}-1}{k+1}+\binom{2}{k+1}.
\end{align*}
For $d\ge 9$ and $1\le k\le d-3$, we show that  $\binom{d-2}{k+1}\ge 3\binom{\floor{d/3}+1}{k+1}$. This is trivial if $\floor{d/3}+1< k+1$, so assume otherwise.   Since $3\floor{d/3}-2\le d-2\le 3\floor{d/3}$, we find that $\binom{d-2}{k+1}\ge\binom{3\floor{d/3}-2}{k+1}$. Let $t:=\floor{d/3}$. Then $t\ge 3$.  Additionally, let
\begin{align*}
    R:=\frac{\binom{3t-2}{k+1}}{\binom{t+1}{k+1}}=\prod_{i=0}^{k}\frac{3t-2-i}{t+1-i}.
\end{align*}
We must show that $R\ge 3$. 
  Because $(3t-2-i)/(t+1-i)\ge (3t-2)/(t+1)$ for all $t\ge 3$  and $i\in [0\ldots k]$, we have $R\ge \paren*{\frac{3t-2}{t+1}}^{k+1}$. The function $g(t):=(3t-2)/(t+1)$ is strictly increasing in $t$ as $g'(t)>0$ for $t>0$. Thus, for $t\ge 3$, we find that $g(t)\ge g(3)=7/4$. Hence 
\begin{align*}
    R\ge \paren*{\frac{3t-2}{t+1}}^{k+1}\ge \paren*{\frac{7}{4}}^{k+1}\ge \paren*{\frac{7}{4}}^{2}=\frac{49}{16}>3.
\end{align*}

Also, for $d\ge 9$ and $\floor{d/3}+1\ge k+1$  we find that 
\begin{align*}
3\binom{\floor{d/3}+1}{k+1}>\binom{\floor{d/3}+1}{k+1}
 +\binom{\floor{d/3}}{k+1}+\binom{\floor{d/3}-1}{k+1}.
\end{align*}
These two inequalities establish that $\f_{k}(d)\ge 0$ for $1\le k\le d-3$.

(ii) 
Let $A:=\hh_{k}(2(d-1)+\ell,d-1)+\ff_{k-1}(2(d-1)+\ell,d-1)+\binom{d-1}{k}+\binom{d-2}{k}-\binom{d-4}{k}$ and $B:=\ff_{k}(2d+\ell,d)$. If $\ell\ge d-4$, then we use $\hh_{k}(2(d-1)+d-4)$ in $A$, while if $\ell\ge d-2$, then we used $\ff_{k-1}(2(d-1)+d-2,d-1)$ in $A$. Additionally, if $\ell\ge d-1$, then we use $\ff_{k}(2d+d-1,d)$ in $B$. 

For $\ell\le d-2$, we simplify $A-B$ using \cref{lem:combinatorial-identities}(ii):
\begin{align*}
A-B&=\brac*{\binom{d}{k+1}+\binom{d-1}{k+1}+\binom{d-2}{k+1}-\binom{d-a+1}{k+1}-\binom{d-a}{k+1}}\\
&\quad +\brac*{\binom{d}{k}+2\binom{d-1}{k}-\binom{d-(\ell+1)}{k}}+\binom{d-1}{k}+\binom{d-2}{k}-\binom{d-4}{k}\\
&\quad -\brac*{\binom{d}{k}+3\binom{d}{k+1}-\binom{d-\ell}{k+1}}\\
&=\binom{d-1}{k}-\binom{d-4}{k}+\binom{d-(\ell+1)}{k+1}-\binom{d-a+1}{k+1}-\binom{d-a}{k+1}.
\end{align*}
Here  $a=\floor{(d+\ell)/2}+1$ for $\ell\le d-4$ and $a=d-1$ for $\ell\ge d-3$.

We have that $\binom{d-1}{k}-\binom{d-4}{k}=\binom{d-2}{k-1}+\binom{d-3}{k-1}+\binom{d-4}{k-1}\ge d-2>0$. 
If $\ell\le d-4$, then $\binom{d-(\ell+1)}{k+1}-\binom{d-a+1}{k+1}-\binom{d-a}{k+1} \ge 0$ for $k\in [2\ldots d-2]$ (\cref{lem:combinatorial-identities}(vii)). If  $\ell\ge d-3$, then $a=d-1$ in $\hh_{k}(2(d-1)+\ell,d-1)$, implying that $A-B\ge \binom{d-1}{k}-\binom{d-4}{k}-\binom{2}{k+1}-\binom{1}{k+1}>0$. This completes the proof of the part.

(iii) We proceed as in Part (ii). Let $A:=\hh_{1}(2(d-1)+\ell,d-1)+2(d-1)+\ell+d+1$ and $B:=\ff_{1}(2d+\ell,d)$.  If $\ell\ge d-4$, then we use $\hh_{1}(2(d-1)+d-4,d-1)$ in $A$, while if $\ell\ge d-1$, then we use $\ff_{1}(2d+d-1,d)$ in $B$. 

For $\ell\le d-2$, we simplify $A-B$ using \cref{lem:combinatorial-identities}(ii):
\begin{align*}
A-B&=\brac*{\binom{d}{2}+\binom{d-1}{2}+\binom{d-2}{2}-\binom{d-a+1}{2}-\binom{d-a}{2}}  +2(d-1)+\ell\\
&\quad+d+1-\brac*{\binom{d}{1}+3\binom{d}{2}-\binom{d-\ell}{2}}\\
&=2+\ell-(d-1)+\binom{d-\ell}{2}-\binom{d-a+1}{2}-\binom{d-a}{2}.
\end{align*}
Here  $a=\floor{(d+\ell)/2}+1$ for $\ell\le d-4$ and $a=d-1$ for $\ell\ge d-3$.

If $\ell\le d-4$, then 
\begin{align*}
  A-B&=\binom{d-(\ell+1)}{2}-\binom{d-a+1}{2}-\binom{d-a}{2}+2>0.
\end{align*}
From \cref{lem:combinatorial-identities}(vii) for $k=1$, it follows that $\binom{d-(\ell+1)}{2}-\binom{d-a+1}{2}-\binom{d-a}{2}\ge 0$, implying that $A-B>0$.

If  $\ell\ge d-3$, then $a=d-1$ in $\hh_{1}(2(d-1)+\ell,d-1)$, in which case 
\begin{align*}
A-B&=2+\ell-(d-1)+\binom{d-\ell}{2}-\binom{2}{2}-\binom{1}{2} > 0:
\end{align*}	 
When $d-\ell\le 0$, $A-B\ge 2$. For the other values of $d-\ell$, we find that $(d-\ell,A-B)=(3,2),(2,1),(1,1)$. This  completes the proof of the part.

(iv) Let $A:=\hh_{k}(2(d-1)+\ell,d-1)+\hh_{k-1}(2(d-1)+\ell,d-1)+\binom{d-1}{k}+\binom{d-2}{k}-\binom{d-4}{k}$ and $B:=\ff_{k}(2d+\ell,d)$. If $\ell\ge d-4$, then we use $\ell=d-4$ in $A$, while if $\ell\ge d-1$, then we use $\ell=d-1$ in $B$. 

For $\ell\le d-2$, we simplify $A-B$ using \cref{lem:combinatorial-identities}(ii):
\begin{align*}
A-B&=\brac*{\binom{d}{k+1}+\binom{d-1}{k+1}+\binom{d-2}{k+1}-\binom{d-a+1}{k+1}-\binom{d-a}{k+1}}\\
&\quad +\brac*{\binom{d}{k}+\binom{d-1}{k}+\binom{d-2}{k}-\binom{d-a+1}{k}-\binom{d-a}{k}}\\
&\quad +\binom{d-1}{k}+\binom{d-2}{k}-\binom{d-4}{k} -\brac*{\binom{d+1}{k+1}+2\binom{d}{k+1}-\binom{d-\ell}{k+1}}\\
&=\binom{d-2}{k}-\binom{d-4}{k}+\binom{d-\ell}{k+1}-\binom{d-a+2}{k+1}-\binom{d-a+1}{k+1}.
\end{align*}
Here  $a=\floor{(d+\ell)/2}+1$ for $\ell\le d-4$ and $a=d-1$ for $\ell\ge d-3$.

We have that $\binom{d-2}{k}-\binom{d-4}{k}=\binom{d-3}{k-1}+\binom{d-4}{k-1}\ge 1$, where equality holds if and only if $k=d-2$. Additionally, if $\ell\le d-4$, then $\binom{d-\ell}{k+1}-\binom{d-a+2}{k+1}-\binom{d-a+1}{k+1} \ge 0$ for $k\in[2\ldots d-2]$. Indeed, for $k\in[2\ldots d-2]$, 
\begin{align*}
    \binom{d-\ell}{k+1}-\binom{d-a+2}{k+1}-\binom{d-a+1}{k+1} \ge \binom{d-\ell-1}{k+1}-\binom{d-a+1}{k+1}-\binom{d-a}{k+1} \ge 0.
\end{align*}
The two inequalities follows from \cref{lem:combinatorial-identities}(vii). Hence $A-B\ge 1$ in this case. If  $\ell\ge d-3$, then $a=d-1$, in which case $\binom{d-\ell}{k+1}-\binom{3}{k+1}-\binom{2}{k+1} \ge -1$ with equality only when $k=2$. For $k=2$ and $d\ge 5$, we find $\binom{d-2}{k}-\binom{d-4}{k}=\binom{d-3}{k-1}+\binom{d-4}{k-1}\ge 3$. Hence $A-B>0$ in this case as well. 
The proof of the part is complete.

(v) Let $A:=\hh_{1}(2(d-1)+\ell-r+2,d-1)+2(d-1)+\ell-r+2+\sum_{i=1}^{r-1} d-i$ and $B:=\ff_{1}(2d+\ell,d)$.  If $\ell-r+2\ge d-4$, then we use $\hh_{1}(2(d-1)+d-4,d-1)$ in $A$, while if $\ell\ge d-1$, then we use $\ff_{1}(2d+d-1,d)$ in $B$. 

Let  $a:=\floor{(d+\ell-r+2)/2}+1$ for $\ell-r+2\le d-4$ and $a:=d-1$ for $\ell-r+2\ge d-3$. We simplify $A-B$ using \cref{lem:combinatorial-identities}(ii):
\begin{align*}
A-B&=\brac*{\binom{d}{2}+\binom{d-1}{2}+\binom{d-2}{2}-\binom{d-a+1}{2}-\binom{d-a}{2}}  +2(d-1)\\
&\quad +\ell-r+2+\sum_{i=1}^{r-1} d-i-\brac*{\binom{d}{1}+3\binom{d}{2}-\binom{d-\ell}{2}}\\
&=1+\ell-r+\binom{d-\ell}{2}+\sum_{i=1}^{r-3}(d-2-i)-\binom{d-a+1}{2}-\binom{d-a}{2}.
\end{align*}

If $\ell-r+2\le d-4$, then 
\begin{align*}
  A-B&\ge  1+\ell-r+\binom{d-\ell+r-3}{2}-\binom{d-a+1}{2}-\binom{d-a}{2},
\end{align*}
since $\binom{d-\ell}{2}+\sum_{i=1}^{r-3}(d-2-i)\ge \binom{d-\ell+r-3}{2}$ for $3\le r\le \ell+1$. Additionally, we find that
$\binom{d-\ell+r-3}{2}-\binom{d-a+1}{2}-\binom{d-a}{2} > 0$ in this case. Indeed, setting $\ell':=\ell-r+2$ transforms the  expression into $\binom{d-\ell'-1}{2}-\binom{d-a+1}{2}-\binom{d-a}{2}$. By \cref{lem:combinatorial-identities}(vii), $\binom{d-\ell'-1}{2}-\binom{d-a+1}{2}-\binom{d-a}{2}>0$ for $1=k<d-\ell'-1=d-\ell+r-3$, which is true for $\ell-r+2\le d-4$. Hence $A-B>0$. 

Assume that $\ell-r+2\ge d-3$. Then $a=d-1$ in $\hh_{1}(2(d-1)+\ell-r+2,d-1)=\hh_{1}(2(d-1)+d-4,d-1)$, in which case, for $d\ge 5$, $A-B=\ell-r+\binom{d-\ell}{2}+\sum_{i=1}^{r-3} (d-2-i)>0$. 
This  completes the proof of the part.

(vi) Let $A:=\hh_{k}(2(d-1)+\ell-r+2,d-1)+\eta_{k-1}(2(d-1)+\ell-r+2,d-1)+\sum_{i=1}^{r-1}\binom{d-i}{k}$ and $B:=\ff_{k}(2d+\ell,d)$.  If $\ell-r+2\ge d-4$, then we use $\ell-r+2=d-4$ in $\hh_{k}(2(d-1)+\ell-r+2,d-1)$, while if $\ell-r+2\ge d-2$, then we use $\ell-r+2=d-2$ in $\eta_{k-1}(2(d-1)+\ell-r+2,d-1)$. Additionally, if $\ell\ge d-1$, then we use $\ell=d-1$ in $B$. 

Let $a:=\floor{(d+\ell-r+2)/2}+1$ for $\ell-r+2\le d-4$  and $a: =d-1$ for $\ell-r+2\ge d-3$. For $\ell-r+2\le d-4$, we simplify $A-B$ using \cref{lem:combinatorial-identities}(ii):  
\begin{align*}
A-B&=\brac*{\binom{d}{k+1}+\binom{d-1}{k+1}+\binom{d-2}{k+1}-\binom{d-a+1}{k+1}-\binom{d-a}{k+1}}\\
&\quad +\brac*{\binom{d}{k}+2\binom{d-1}{k}-\binom{d-(\ell-r+3)}{k}}+\sum_{i=1}^{r-1}\binom{d-i}{k}\\
&\quad -\brac*{\binom{d}{k}+3\binom{d}{k+1}-\binom{d-\ell}{k+1}}\\
&=-\binom{d-a+1}{k+1}-\binom{d-a}{k+1}+\binom{d-1}{k}-\binom{d-\ell+r-3}{k}+\sum_{i=1}^{r-3}\binom{d-2-i}{k} +\binom{d-\ell}{k+1} \\
&\ge -\binom{d-a+1}{k+1}-\binom{d-a}{k+1}+\binom{d-1}{k}-\binom{d-\ell+r-3}{k} \\
  &\quad+\brac*{\binom{d-2}{k+1}-\binom{d-r+1}{k+1}}+\binom{d-\ell}{k+1}\\
  &\ge \binom{d-\ell+r-3}{k+1}-\binom{d-a+1}{k+1}-\binom{d-a}{k+1}+\binom{d-1}{k}-\binom{d-\ell+r-3}{k}.
\end{align*}
Since $\binom{d-2}{k+1}-\binom{d-r+1}{k+1}+\binom{d-\ell}{k+1}\ge \binom{d-\ell+r-3}{k+1}$ for $3\le r\le \ell+1$. We find that $\binom{d-1}{k}-\binom{d-\ell+r-3}{k}> 0$ for $r\le \ell+1$. Additionally, with $\ell':=\ell-r+2$, the expression 
$\binom{d-\ell+r-3}{k+1}-\binom{d-a+1}{k+1}-\binom{d-a}{k+1}$ becomes $\binom{d-\ell'-1}{k+1}-\binom{d-a+1}{k+1}-\binom{d-a}{k+1}$. By \cref{lem:combinatorial-identities}(vii), $\binom{d-\ell'-1}{k+1}-\binom{d-a+1}{k+1}-\binom{d-a}{k+1}\ge 0$. Hence  $A-B > 0$.
 
 If  $\ell-r+2\ge d-3$, then $a=d-1$ in $\hh_{k}(2(d-1)+\ell-r+2,d-1)$, in which case 
\begin{align*}
A-B&
& = \binom{d-1}{k}-\binom{d-\ell+r-3}{k}-\binom{2}{k+1}-\binom{1}{k+1}+\sum_{i=1}^{r-3}\binom{d-2-i}{k}+\binom{d-\ell}{k+1}.
\end{align*}
For $k\ge 2$, since $d-1>d-\ell+r-3$, it follows that $A-B\ge \binom{d-1}{k}-\binom{d-\ell+r-3}{k}>0$. 
This completes the proof of the part.

(vii)  Let $A:=\hh_{k}(2(d-1)+\ell-r+2,d-1)+\hh_{k-1}(2(d-1)+\ell-r+2,d-1)+\sum_{i=1}^{r-1} \binom{d-i}{k}$ and $B:=\ff_{k}(2d+\ell,d)$. If $\ell-r+2\ge d-4$, then we use $\ell-r+2=d-4$ in $A$, while if $\ell\ge d-1$, then we use $\ell=d-1$ in $B$. 

Let $a:=\floor{(d+\ell-r+2)/2}+1$ for $\ell-r+2\le d-4$ and $a=d-1$ for $\ell-r+2\ge d-3$.  We simplify $A-B$ using \cref{lem:combinatorial-identities}(ii):
\begin{equation}
    \label{eq:combinatorial-ineq-7-1}
    \begin{aligned}
A-B&=\brac*{\binom{d}{k+1}+\binom{d-1}{k+1}+\binom{d-2}{k+1}-\binom{d-a+1}{k+1}-\binom{d-a}{k+1}}\\
&\quad +\brac*{\binom{d}{k}+\binom{d-1}{k}+\binom{d-2}{k}-\binom{d-a+1}{k}-\binom{d-a}{k}}+\sum_{i=1}^{r-1}\binom{d-i}{k} \\
&\quad -\brac*{\binom{d+1}{k+1}+2\binom{d}{k+1}-\binom{d-\ell}{k+1}}\\
&=\sum_{i=1}^{r-2}\binom{d-1-i}{k}+\binom{d-\ell}{k+1}-\binom{d-a+2}{k+1}-\binom{d-a+1}{k+1}.
\end{aligned}
\end{equation}

\textbf{Suppose that $\ell-r+2\le d-4$ in \eqref{eq:combinatorial-ineq-7-1}.} For $k=d-2$, \eqref{eq:combinatorial-ineq-7-1} becomes $A-B=\sum_{i=1}^{r-2}\binom{d-1-i}{k}$, implying $A-B\ge 1$ for $r\ge 3$. Thus assume $2\le k\le d-3$. For  $3\le r\le \ell+1$, by \cref{lem:combinatorial-identities}  we find that
\begin{align*}
  \sum_{i=1}^{r-2}\binom{d-1-i}{k}+\binom{d-\ell}{k+1}&\ge   \sum_{i=1}^{r-2}\binom{d-\ell+r-2-i}{k}+\binom{d-\ell}{k+1}=\binom{d-\ell+r-2}{k+1},
\end{align*}
where the first inequality holds with equality if and only if $r=\ell+1$.  It suffices to show that $\binom{d-\ell+r-2}{k+1}-\binom{d-a+2}{k+1}-\binom{d-a+1}{k+1}\ge 0$ for $3\le r\le \ell$ and $\binom{d-\ell+r-2}{k+1}-\binom{d-a+2}{k+1}-\binom{d-a+1}{k+1}>0$ for $r=\ell+1$.

For $k\in[2\ldots d-3]$, setting $\ell':=\ell-r+2$ transforms  the expression $\binom{d-\ell+r-2}{k+1}-\binom{d-a+2}{k+1}-\binom{d-a+1}{k+1}$  into $\binom{d-\ell'}{k+1}-\binom{d-a+2}{k+1}-\binom{d-a+1}{k+1}$. By \cref{lem:combinatorial-identities}(vii), for $2\le k\le d-3$, we find 
\begin{equation}
\begin{aligned}
\label{eq:combinatorial-ineq-7}
\binom{d-\ell'}{k+1}-\binom{d-a+2}{k+1}-\binom{d-a+1}{k+1} 
\ge & \binom{d-\ell'-1}{k+1}-\binom{d-a+1}{k+1}-\binom{d-a}{k+1}.
\end{aligned}
\end{equation}

Also, by \cref{lem:combinatorial-identities}(vii), 
\begin{equation}
\begin{aligned}
\label{eq:combinatorial-ineq-8}
 \binom{d-\ell'-1}{k+1}-\binom{d-a+1}{k+1}-\binom{d-a}{k+1}\ge 0.
\end{aligned}
\end{equation}
Combining \eqref{eq:combinatorial-ineq-7} and \eqref{eq:combinatorial-ineq-8} we get $A-B\ge 0$ for $3\le r\le \ell+1$. Inequality \eqref{eq:combinatorial-ineq-8} is strict if and only if $k< d-\ell'-1=d-\ell+r-3$ (\cref{lem:combinatorial-identities}(vii)). If $r=\ell+1$, then   \eqref{eq:combinatorial-ineq-8} is strict if $k\le d-3$. Hence, $A-B >0$ for  $r=\ell+1$ and all $k\in [2\ldots d-3]$. 

\textbf{Suppose that $\ell-r+2\ge d-3$ in \eqref{eq:combinatorial-ineq-7-1}.} Then $a=d-1$, in which case $d-\ell+r-2\le 3$, and \eqref{eq:combinatorial-ineq-7-1} becomes \begin{equation*}
    \sum_{i=1}^{r-2}\binom{d-1-i}{k}+\binom{d-\ell}{k+1}-\binom{3}{k+1}-\binom{2}{k+1} > 0,
\end{equation*}
for $d\ge 5$ and  $k\in[2\ldots d-2]$. The proof of the part is complete.
\end{proof} 



\bibliographystyle{amsplain}

\end{document}